\newtheorem{theorem}{Theorem}[section]
\newtheorem{lemma}[theorem]{Lemma}
\newtheorem{corollary}[theorem]{Corollary}
\newtheorem{proposition}[theorem]{Proposition}
\theoremstyle{definition}
\newtheorem{example}[theorem]{Example}
\newtheorem{remark}[theorem]{Remark}
\numberwithin{equation}{section}
\DeclareMathOperator{\RE}{Re}
\DeclareMathOperator{\IM}{Im}
\DeclareMathOperator{\Log}{Log}
\DeclareMathOperator{\supp}{supp}
\DeclareMathOperator{\sym}{sym}
\DeclareMathOperator{\meas}{meas}
\begin{document}

\title[Probability density functions attached to random Euler products]
{Probability density functions attached to random Euler products for automorphic $L$-functions}

\author[M. Mine]{Masahiro Mine}
\address{Faculty of Science and Technology\\ Sophia University\\ 7-1 Kioi-cho, Chiyoda-ku, Tokyo 102-8554, Japan}
\email{m-mine@sophia.ac.jp}

\date{}

\begin{abstract}
In this paper, we study the value-distributions of $L$-functions of holomorphic primitive cusp forms in the level aspect. 
We associate such automorphic $L$-functions with probabilistic models called the random Euler products. 
First, we prove the existence of probability density functions attached to the random Euler products. 
Then various mean values of automorphic $L$-functions are expressed as integrals involving the density functions. 
Moreover, we estimate the discrepancies between the distributions of values of automorphic $L$-functions and those of the random Euler products. 
\end{abstract}

\subjclass[2020]{Primary 11F66; Secondary 11F72}

\keywords{automorphic $L$-function, value-distribution, trace formula, random Euler product, $M$-function}

\thanks{The work of this paper was supported by Grant-in-Aid for JSPS Research Fellows (Grant Number JP21J00529).}

\maketitle

\section{Introduction and statement of results}\label{sec1}
The study of the value-distributions of zeta and $L$-functions began with the work of Bohr and his collaborators in the early 20th century. 
Bohr--Jessen \cite{BohrJessen1930, BohrJessen1932} especially proved the existence of a limiting probability measure for the Riemann zeta-function $\zeta(\sigma+it)$ in $t$-aspect, that is, there exists the limit value
\begin{gather}\label{eq220002}
W_\sigma(R)
=\lim_{T \to\infty} \frac{1}{2T}
\meas\{ t \in[-T,T] \mid \log\zeta(\sigma+it) \in R \}
\end{gather}
for $\sigma>1/2$, where $R$ is any rectangle on $\mathbb{C}$ with edges parallel to the axes. 
Their method was later refined by many authors including Jessen--Wintner \cite{JessenWintner1935} and Borchsenius--Jessen \cite{BorchseniusJessen1948}. 
On the other hand, Chowla--Erd\H{o}s \cite{ChowlaErdos1951} obtained an analogous result for Dirichlet $L$-functions $L(s,\chi)$ in $\chi$-aspect. 
They considered $L$-functions of real characters $\chi_d$ attached to positive discriminants $d$ and proved the existence of the limit value
\begin{gather}\label{eq220003}
G_\sigma(a)
=\lim_{X \to\infty} \frac{2}{X}
\#\{ 0<d \leq X \mid \text{ $d \equiv0,1 (\bmod 4)$ square-free, ${L}(\sigma,\chi_d) \leq a$ }\}
\end{gather}
for $\sigma>3/4$, where $a$ is any positive real number. 
Similar results for $L$-functions of degree one were proved by Barban \cite{Barban1966}, Elliott \cite{Elliott1970, Elliott1971, Elliott1972, Elliott1973}, Stankus \cite{Stankus1975a, Stankus1975b}, and others. 

The above limit values $W_\sigma(R)$ and $G_\sigma(a)$ can be regarded as the probability distributions of certain random variables. 
For example, let $X=(X_p)$ be a sequence of independent random variables indexed by prime numbers which are uniformly distributed on the unit circle. 
Then we define the random Euler product
\[
\zeta(\sigma,X)
=\prod_{p} \left( 1- \frac{X_p}{p^\sigma} \right)^{-1}
\]
for $\sigma>1/2$, where $p$ runs through all prime numbers. 
In this case, the limit value $W_\sigma(R)$ of \eqref{eq220002} is represented as 
\[
\mathbb{P}( \log{\zeta}(\sigma,X) \in R)
=W_\sigma(R),
\]
where $\mathbb{P}(E)$ indicates the probability of an event $E$. 
One of the modern approaches to the study of the Riemann zeta-function is to compare the distribution of values $\zeta(\sigma+it)$ with the probability distribution of $\zeta(\sigma,X)$. 
Put 
\[
\mathbb{P}_T(\log{\zeta}(\sigma+it) \in R)
=\frac{1}{2T} \meas \{ t \in [-T,T] \mid \log{\zeta}(\sigma+it) \in R \}. 
\]
Lamzouri--Lester--Radziwi{\l\l} \cite{LamzouriLesterRadziwill2019} refined formula \eqref{eq220002} by estimating the discrepancy between $\zeta(\sigma+it)$ and $\zeta(\sigma,X)$. 
Indeed, they proved the upper bound
\begin{align}\label{eq231327}
D_\sigma(T)
&:=\sup_{R} \left| \mathbb{P}_T( \log{\zeta}(\sigma,X) \in R)
-\mathbb{P}( \log{\zeta}(\sigma,X) \in R) \right| \nonumber\\
&\ll
\begin{cases}
(\log{T})^{-1} (\log\log{T})^2 
& \text{for $\sigma=1$}, \\
(\log{T})^{-\sigma} 
& \text{for $1/2<\sigma<1$},
\end{cases}
\end{align}
which improves further the result of Harman--Matsumoto \cite{HarmanMatsumoto1994}. 
One can obtain an analogous result for Dirichlet $L$-functions $L(s,\chi)$ in $\chi$-aspect without major changes from the method of \cite{LamzouriLesterRadziwill2019}. 

Except for the case in $t$-aspect, the treatment of an $L$-function of higher degree should be rather complicated due to the lack of the completely multiplicativity of the Dirichlet coefficients. 
For the last two decades, many researchers confronted this difficulty and obtained one-dimensional results by concentrating on the values at real points $s=\sigma$ with $1/2<\sigma \leq 1$; see Section \ref{sec1.2}. 
The purposes of this paper is to study the one- or two-dimensional value-distributions of $L$-functions of holomorphic primitive cusp forms, which are typical examples of $L$-functions of degree two. 
The precise definitions of such automorphic $L$-functions are described in Section \ref{sec1.1}. 
Then, as an adequate random model for the value-distribution of the automorphic $L$-functions, we define the random Euler product $L(s,\Theta)$ as follows. 
Let $\Theta=(\Theta_p)$ be a sequence of independent $[0,\pi]$-valued random variables. 
We consider the infinite product
\begin{gather}\label{eq220316}
L(s,\Theta)
=\prod_{p} \left( 1- \frac{2 \cos \Theta_p}{p^s}+ \frac{1}{p^{2s}} \right)^{-1},  
\end{gather}
where $p$ runs through all prime numbers. 
For any fixed complex number $s=\sigma+it$ with $\sigma>1/2$, a sufficient condition to the convergence of \eqref{eq220316} almost surely is that every $\Theta_p$ satisfies
\begin{gather}\label{eq220319}
\mathbb{E}[\cos \Theta_p]
\ll_\epsilon p^{-\frac{1}{2}+\epsilon}
\end{gather}
for each $\epsilon>0$. 
Here, $\mathbb{E}[X]$ indicates the expected value of a random variable $X$. 
We present standard examples of the sequence $\Theta=(\Theta_p)$ in Example \ref{exaTF}, and we find that \eqref{eq220319} is just a technical condition to define $L(s,\Theta)$ for $\sigma>1/2$. 
The random Euler product $L(s,\Theta)$ is a $\mathbb{C}$-valued random variable in this case. 
We begin by showing the existence of a probability density function attached to $\log{L}(s,\Theta)$. 

\begin{theorem}\label{thmPDF}
Let $\Theta=(\Theta_p)$ be a sequence of independent $[0,\pi]$-valued random variables. 
Suppose that every $\Theta_p$ satisfies condition \eqref{eq220319} for each $\epsilon>0$ and 
\begin{gather}\label{eq220320}
\mathbb{E}[(\cos \Theta_p)^2]
\geq \delta
\end{gather}
with some absolute constant $\delta>0$. 
Then we have the following results. 
\begin{itemize}
\item[$(\mathrm{i})$]
Let $s=\sigma+it$ be a fixed complex number with $\sigma>1/2$ and $t \neq0$. 
Then there exists a continuous function $\mathcal{M}_s(\,\cdot\,,\Theta): \mathbb{C} \to \mathbb{R}_{\geq0}$ such that 
\[
\mathbb{P}( \log{L}(s,\Theta) \in A)
=\int_{A} \mathcal{M}_s(w,\Theta) \,|dw|
\]
holds for all $A \in \mathcal{B}(\mathbb{C})$, where we write $|dw|=(2\pi)^{-1} dudv$ for $w=u+iv$.  
\item[$(\mathrm{ii})$]
Let $\sigma>1/2$ be a fixed real number. 
Then there exists a continuous function $\mathcal{M}_\sigma(\,\cdot\,,\Theta): \mathbb{R} \to \mathbb{R}_{\geq0}$ such that 
\[
\mathbb{P}( \log{L}(\sigma,\Theta) \in A)
=\int_{A} \mathcal{M}_\sigma(u,\Theta) \,|du|
\]
holds for all $A \in \mathcal{B}(\mathbb{R})$, where we write $|du|=(2\pi)^{-1/2}du$. 
\end{itemize}
Here, we denote by $\mathcal{B}(S)$ the class of Borel sets of a topological space $S$. 
\end{theorem}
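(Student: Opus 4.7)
My plan is to prove both (i) and (ii) by establishing integrability of the characteristic function of $\log L(s,\Theta)$ and then applying Fourier inversion. Writing $\log L(s,\Theta) = \sum_p Y_p(s)$ with
\[
Y_p(s) := -\log\!\Bigl(1 - \frac{2\cos\Theta_p}{p^s}+\frac{1}{p^{2s}}\Bigr)
= \frac{2\cos\Theta_p}{p^s} + \frac{\cos(2\Theta_p)}{p^{2s}} + O(p^{-3\sigma}),
\]
the summands are independent and the series converges almost surely, so the characteristic function factors as
\[
\Phi_s(z) := \mathbb{E}\bigl[\exp(i\,\mathrm{Re}(z\,\overline{\log L(s,\Theta)}))\bigr]
= \prod_p \psi_p(z;s), \quad \psi_p(z;s) := \mathbb{E}\bigl[\exp(i\,\mathrm{Re}(z\,\overline{Y_p(s)}))\bigr].
\]
Once $\Phi_s \in L^1(\mathbb{C})$ is established, the Fourier inversion formula for probability measures produces a continuous, non-negative density $\mathcal{M}_s(\,\cdot\,,\Theta)$ satisfying the representation in (i); part (ii) is handled by the same scheme on $\mathbb{R}$.

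\textbf{Local estimate of each factor.} Writing $z = re^{i\phi}$, a short computation gives $\mathrm{Re}(z\,\overline{Y_p(s)}) = (2r/p^\sigma)\cos\Theta_p\cos(\phi + t\log p) + O(r p^{-2\sigma})$. Expanding $\exp(i\,\cdot\,)$ to second order and inserting \eqref{eq220319} (which bounds the linear term in $\psi_p$ by $O(rp^{-\sigma-1/2+\epsilon})$) and \eqref{eq220320} (which forces $\mathrm{Var}(\cos\Theta_p) \geq \delta/2$ for large $p$), I obtain, for primes satisfying $p \geq (r/c_0)^{1/\sigma}$,
\[
|\psi_p(z;s)| \leq \exp\!\Bigl(-c\delta\,\frac{r^2}{p^{2\sigma}}\cos^2(\phi+t\log p)\Bigr),
\]
with absolute constants $c,c_0 > 0$ independent of $\phi$.

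\textbf{Global decay and integrability.} Multiplying the factor-wise bounds (using $|\psi_p| \leq 1$ for $p < (r/c_0)^{1/\sigma}$) gives
\[
|\Phi_s(z)| \leq \exp\!\Bigl(-c\delta\, r^2 \sum_{p > P} \frac{\cos^2(\phi+t\log p)}{p^{2\sigma}}\Bigr), \qquad P := (r/c_0)^{1/\sigma}.
\]
In case (i), $t \neq 0$, so the sequence $\{(t\log p)/(2\pi)\}_p$ is equidistributed modulo $1$ by the prime number theorem and Weyl's criterion. A quantitative version of this yields $\sum_{p > P} p^{-2\sigma}\cos^2(\phi+t\log p) \gg P^{1-2\sigma}/\log P$ uniformly in $\phi$, whence $|\Phi_s(z)| \leq \exp(-c' r^{1/\sigma}/\log r)$, which decays faster than any polynomial and is therefore in $L^1(\mathbb{C})$. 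In case (ii), $z$ is real and $t = 0$, so the cosine factor is identically $1$ and the same calculation gives $L^1$-integrability on $\mathbb{R}$ directly, bypassing any equidistribution input.

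\textbf{Main obstacle.} The principal technical step is the uniformity in the phase $\phi$ of the lower bound for $\sum_{p > P} p^{-2\sigma}\cos^2(\phi+t\log p)$: an adversarial $\phi$ might make $\cos(\phi + t\log p)$ small on long subsequences of primes, and controlling this requires a quantitative Weyl-type estimate for $\sum_{p \leq X} e^{it\log p}$ together with partial summation. Once integrability of $\Phi_s$ is in hand, continuity of $\mathcal{M}_s$ follows from dominated convergence inside the inversion integral, non-negativity is automatic because $\Phi_s$ is the characteristic function of a probability measure, and the integral identities in (i) and (ii) are the standard consequence of Fourier inversion.
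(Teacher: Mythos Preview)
Your proposal is correct and follows essentially the same route as the paper: factor the characteristic function as $\prod_p \widetilde{\mathcal{M}}_{s,p}(z,\overline{z},\Theta)$ (the paper's Lemma~2.2), bound each local factor via a second-order Taylor expansion using \eqref{eq220319} and \eqref{eq220320} (Proposition~2.1), sum over $p>Q\asymp |z|^{1/\sigma}$ via the prime number theorem to obtain the decay $|\Phi_s(z)|\leq\exp(-c|z|^{1/\sigma}/\log|z|)$, and then apply L\'evy/Jessen--Wintner inversion (Lemma~4.1) to produce the continuous density.

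The one point worth comparing is your ``main obstacle,'' the uniformity in $\phi$ of the lower bound for $\sum_{p>P} p^{-2\sigma}\cos^2(\phi+t\log p)$. You propose equidistribution of $\{t\log p\}$ and a quantitative Weyl-type estimate, which works but is heavier than needed. The paper instead writes $2\langle z,p^{-s}\rangle^2 = |z|^2 p^{-2\sigma} + \langle z^2,p^{-2s}\rangle$ (just $2\cos^2\theta=1+\cos 2\theta$ in disguise) and bounds the oscillatory piece by $\bigl|\sum_{p>Q}\langle z^2,p^{-2s}\rangle\bigr|\leq |z|^2\,\bigl|\sum_{p>Q} p^{-2s}\bigr|$, which the ordinary prime number theorem with partial summation evaluates as $|z|^2 Q^{1-2\sigma}/(|2s-1|\log Q)$ up to lower order. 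Since $|2s-1|>2\sigma-1$ whenever $t\neq 0$, this is strictly smaller than the non-oscillatory main term $|z|^2 Q^{1-2\sigma}/((2\sigma-1)\log Q)$, and the uniformity in the phase falls out of the trivial inequality $|\langle z^2,w\rangle|\leq |z|^2|w|$ with no equidistribution input required.
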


It is notable that the notation $\mathcal{M}_s(\,\cdot\,,\Theta)$ indicates a function on $\mathbb{C}$ if $s$ is not real, while the same notation indicates a function on $\mathbb{R}$ if $s$ is real. 
We obtain a relation between $\mathcal{M}_{\sigma+it}(w,\Theta)$ with $t \neq 0$ and $\mathcal{M}_\sigma(u,\Theta)$ as \eqref{eq300000} in Section \ref{sec4.1}. 
Note further that Bohr--Jessen \cite{BohrJessen1930, BohrJessen1932} obtained a continuous function $F_\sigma: \mathbb{C} \to \mathbb{R}_{\geq0}$ such that
\begin{gather}\label{eq240107}
W_\sigma(R)
=\int_{R} F_\sigma(u+iv) \,dudv
\end{gather}
for $\sigma>1/2$. 
Hence the density function $\mathcal{M}_s(\,\cdot\,,\Theta)$ is an analogue of $F_\sigma$ up to a multiplicative constant.

\subsection{The value-distributions of automorphic $L$-functions}\label{sec1.1}
Let $q$ be a prime number, and denote by $S_2(q)$ the vector space of holomorphic cusp forms for $\Gamma_0(q)$ of weight 2 with trivial nebentypus. 
For a cusp form $f \in S_2(q)$, we have the Fourier series expansion 
\[
f(z)
=\sum_{n=1}^{\infty} a(n,f) \sqrt{n} \exp(2\pi inz). 
\]
Then the automorphic $L$-function attached to $f \in S_2(q)$ is defined as 
\[
L(s,f)
=\sum_{n=1}^{\infty} \frac{a(n,f)}{n^s}. 
\]
Let $B_2(q)$ denote an orthogonal basis of $S_2(q)$ consisting of primitive cusp forms normalized so that $a(1,f)=1$. 
The coefficient $a(n,f)$ is multiplicative if $f \in B_2(q)$. 
By the achievement of Deligne, there exists a real number $\theta_p(f) \in [0,\pi]$ such that $a(p,f)=2\cos \theta_p(f)$ for $p \neq q$, and more generally, we have
\begin{gather}\label{eq070007}
a(p^m,f)
=
\begin{cases}
U_m(\cos \theta_p(f)) 
& \text{for $p \neq q$}, \\
a(q,f)^m 
& \text{for $p=q$},
\end{cases}
\end{gather}
where $U_m(x)$ is the $m$-th Chebyshev polynomial of the second kind. 
This implies that the automorphic $L$-function $L(s,f)$ for $f \in B_2(q)$ has the Euler product 
\begin{gather}\label{eq220310}
L(s,f)
=\left( 1- \frac{a(q,f)}{q^s} \right)^{-1}
\prod_{p \neq q} \left(1- \frac{a(p,f)}{p^s}+ \frac{1}{p^{2s}} \right)^{-1}  
\end{gather}
which is convergent absolutely for $\RE(s)>1$. 
Recall that $L(s,f)$ is analytically continued to the whole complex plane and that a usual functional equation holds between $L(s,f)$ and $L(1-s,f)$. 

Let $a(n,\Theta)$ be the random variable defined by extending $a(p^m,\Theta)=U_m(\cos \Theta_p)$ multiplicatively in $n$. 
Then the random variable $L(s,\Theta)$ of \eqref{eq220316} is represented as 
\[
L(s,\Theta)
=\sum_{n=1}^{\infty} \frac{a(n,\Theta)}{n^s} 
\]
for $\RE(s)>1/2$ almost surely if condition \eqref{eq220319} is satisfied. 
We further suppose that
\begin{gather}\label{eq220323}
\mathbb{E}[a(n,\Theta)]
=0
\end{gather}
unless $n$ is a perfect square. 
Note that \eqref{eq220323} implies \eqref{eq220319} by definition. 
We use \eqref{eq220323} to derive a large sieve inequality; see Proposition \ref{lemIM} and Corollary \ref{corIM}. 
To compare $L(s,f)$ and $L(s,\Theta)$, we consider the following relation between $a(n,f)$ and $a(n,\Theta)$: there exist absolute constants $\alpha, \beta>0$ such that 
\begin{gather}\label{eq220321}
\sum_{f \in B_2(q)} \omega_q(f) a(n,f)
=\mathbb{E}[a(n,\Theta)]
+O\left( n^\alpha q^{-\beta} \right) 
\end{gather}
for $(n,q)=1$, where $\omega_q$ is a non-negative weight function defined on $B_2(q)$.  
Remark that \eqref{eq220321} implies $\sum_{f \in B_2(q)} \omega_q(f) \to1$ as $q \to\infty$ since $a(1,f)=a(1,\Theta)=1$.  
For some technical reasons, we suppose that the weight function $\omega_q$ satisfies
\begin{gather}\label{eq220322}
\max_{f \in B_2(q)} \omega_q(f)
\ll q^{-1} (\log{q})^A 
\end{gather}
with some absolute constant $A \geq0$. 

\begin{example}\label{exaTF}
We have the following examples of $\Theta$ and $\omega_q$ satisfying conditions \eqref{eq220320}, \eqref{eq220323}, \eqref{eq220321}, and \eqref{eq220322}. 
\begin{itemize}
\item[$(\mathrm{1})$]
Let $\Theta^1=(\Theta_p^1)$ be a sequence of independent random variables identically distributed on $[0,\pi]$ according to the Sato--Tate measure
\[
d \mu_\infty(\theta)
=\frac{2}{\pi} \sin^2 \theta \,d \theta.
\] 
By definition, the expected values are calculated as 
\[
\mathbb{E}[(\cos \Theta_p^1)^2]
=1/4
\quad\text{and}\quad
\mathbb{E}[U_m(\cos \Theta_p^1)]
=0
\]
for any integer $m \geq1$. 
Hence conditions \eqref{eq220320} and \eqref{eq220323} are satisfied with $\Theta=\Theta^1$. 
Let $\langle f,f \rangle$ denote the Petersson norm of $f$. 
Then condition \eqref{eq220322} holds with the harmonic weight $\omega_q(f)=(4 \pi \langle f,f \rangle)^{-1}$ by the bounds 
\[
q(\log{q})^{-1}
\ll \langle f,f \rangle
\ll q(\log{q})^3;
\]
see \cite[$(1.17)$]{CogdellMichel2004}.  
Finally, the Petersson trace formula \cite[Proposition 1.9]{CogdellMichel2004} yields relation \eqref{eq220321} in this case.  
\item[$(\mathrm{2})$]
Let $\Theta^2=(\Theta_p^2)$ be a sequence of independent random variables, and let every $\Theta_p^2$ be distributed on $[0,\pi]$ according to the $p$-adic Plancherel measure
\[
d \mu_p(\theta)
=\left(1+\frac{1}{p}\right) \left( 1- \frac{2\cos 2\theta}{p}+ \frac{1}{p^2} \right)^{-1}
d \mu_\infty(\theta).
\] 
Then conditions \eqref{eq220320} and \eqref{eq220323} are satisfied with $\Theta=\Theta^2$ since 
\[
\mathbb{E}[(\cos \Theta_p^2)^2]
=(1+p^{-1})/4
\quad\text{and}\quad 
\mathbb{E}[U_m(\cos \Theta_p^2)]=0
\]
for any odd integer $m \geq1$. 
We also recall that the formula 
\[
\# B_2(q)
=\dim S_2(q)
=\frac{q}{12}+O(1)
\]
holds. 
Hence \eqref{eq220322} holds with $\omega_q(f)=\# B_2(q)^{-1}$. 
Moreover, relation \eqref{eq220321} is derived from the Eichler--Selberg trace formula; see \cite[Proposition 2.8]{Brumer1995}. 
\end{itemize}
\end{example}

The Grand Riemann Hypothesis (GRH) asserts that all non-trivial zeros of $L(s,f)$ lie on the critical line $\RE(s)=1/2$. 
However, we shall permit the existence of zeros off the critical line. 
Hence $\log{L}(s,f)$ may not be defined as a holomorphic function on the right-half plane $D=\{ s \in \mathbb{C}\mid \RE(s)>1/2 \}$. 
We fix the branch of $\log{L}(s,f)$ as follows. 
First, we define $\log{L}(s,f)$ for $\RE(s)>1$ by 
\[
\log{L}(s,f)
=\sum_{p} \sum_{m=1}^{\infty} \frac{b(p^m,f)}{p^{ms}}
\]
according to Euler product representation \eqref{eq220310}, where $b(p^m,f)$ is given by
\begin{gather}\label{eq052248}
b(p^m,f)
=
\begin{cases}
2\cos(m\theta_p(f))/m 
& \text{for $p \neq q$}, \\
a(q,f)^m/m 
& \text{for $p=q$}.
\end{cases}
\end{gather}
We extend $\log{L}(s,f)$ for $s \in G_f$ by the analytic continuation along the horizontal path from right, where $G_f$ is the set
\begin{gather}\label{eq220330}
G_f
=D \setminus \bigcup_{\substack{ L(\rho,f)=0 \\ \RE(\rho)>1/2 }}
\{ \sigma+i\IM(\rho) \mid 1/2<\sigma \leq \RE(\rho) \}. 
\end{gather}

Let $S=\mathbb{R}$ or $\mathbb{C}$. 
Denote by $C(S)$ the class of all continuous functions on $S$.  
Then we define three subclasses of $C(S)$ as 
\begin{align*}
{C}^{\exp}(S)
&=\left\{ \Phi \in{C}(S) ~\middle|~ \text{$\Phi(x) \ll e^{a|x|}$ with some $a>0$} \right\}, \\
{C}^{\mathrm{poly}}(S)
&=\left\{ \Phi \in{C}(S) ~\middle|~ \text{$\Phi(x) \ll |x|^a$ with some $a>0$} \right\}, \\
{C}_b(S)
&=\left\{ \Phi \in{C}(S) ~\middle|~ \text{$\Phi$ is bounded} \right\}. 
\end{align*}
We further define
\begin{gather*}
\mathcal{I}(S)
=\left\{ 1_A ~\middle|~ \text{$A$ is a continuity set of $S$} \right\}, 
\end{gather*}
where $1_A$ is the indicator function of a set $A \subset S$, and a Borel set $A$ is called a continuity set of $S$ if its boundary $\partial A$ has Lebesgue measure zero in $S$. 
Let
\[
B'_2(q,s)
=\{ f \in B_2(q) \mid s \in G_f \},
\]
where $G_f$ is the set of \eqref{eq220330}. 
Then $B'_2(q,s)=B_2(q)$ holds for $\RE(s) \geq1$ unconditionally since there exist no zeros of $L(s,f)$ with $\RE(\rho) \geq1$. 
Similarly, we have $B'_2(q,s)=B_2(q)$ for any $\RE(s)>1/2$ under the assumption of GRH. 

\begin{theorem}\label{thmMV}
Let $\Theta=(\Theta_p)$ be a sequence of independent $[0,\pi]$-valued random variables and $\omega_q$ be a non-negative weight function on $B_2(q)$. 
Suppose that $\Theta$ and $\omega_q$ satisfies \eqref{eq220320}, \eqref{eq220323}, \eqref{eq220321}, and \eqref{eq220322}. 
Then we have the following results. 
\begin{itemize}
\item[$(\mathrm{i})$]
Let $s=\sigma+it$ be a fixed complex number with $\sigma>1/2$ and $t \neq 0$. 
Then the limit formula
\begin{gather}\label{eq111435}
\lim_{\substack{ q \to\infty \\ \text{{\rm $q$: prime}} }}
\sum_{f \in B'_2(q,s)} \omega_q(f) \Phi(\log{L}(s,f))
=\int_{\mathbb{C}} \Phi(w) \mathcal{M}_s(w,\Theta) \,|dw|
\end{gather}
holds in the following cases: 
\begin{itemize}
\item 
$\sigma>1$ and $\Phi \in C(\mathbb{C})\cup\mathcal{I}(\mathbb{C})$; 
\item 
$\sigma=1$ and $\Phi \in C^{\mathrm{poly}}(\mathbb{C})\cup\mathcal{I}(\mathbb{C})$; 
\item 
$1/2<\sigma<1$ and $\Phi \in C_b(\mathbb{C})\cup\mathcal{I}(\mathbb{C})$ without assuming GRH; 
\item 
$1/2<\sigma \leq1$ and $\Phi \in C^{\exp}(\mathbb{C})\cup\mathcal{I}(\mathbb{C})$ if we assume GRH.  
\end{itemize}
\item[$(\mathrm{ii})$]
Let $\sigma>1/2$ be a fixed real number. 
Then the limit formula
\[
\lim_{\substack{ q \to\infty \\ \text{{\rm $q$: prime}} }}
\sum_{f \in B'_2(q,\sigma)} \omega_q(f) \Phi(\log{L}(\sigma,f))
=\int_{\mathbb{R}} \Phi(u) \mathcal{M}_\sigma(u,\Theta) \,|du|
\]
holds in the following cases: 
\begin{itemize}
\item 
$\sigma>1$ and $\Phi \in C(\mathbb{R})\cup\mathcal{I}(\mathbb{R})$; 
\item 
$\sigma=1$ and $\Phi \in C^{\exp}(\mathbb{R})\cup\mathcal{I}(\mathbb{R})$; 
\item 
$1/2<\sigma<1$ and $\Phi \in C_b(\mathbb{R})\cup\mathcal{I}(\mathbb{R})$ without assuming GRH; 
\item 
$1/2<\sigma<1$ and $\Phi \in C^{\exp}(\mathbb{R})\cup\mathcal{I}(\mathbb{R})$ if we assume GRH.  
\end{itemize}
\end{itemize}
\end{theorem}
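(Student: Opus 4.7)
The strategy is the classical moment-method / truncation approach adapted to the automorphic family, with Theorem~\ref{thmPDF} furnishing the density on the limit side. For $y \ge 2$, let
\[
L_y(s,f) = \prod_{\substack{p \le y \\ p \ne q}} \Bigl( 1- \frac{a(p,f)}{p^s}+ \frac{1}{p^{2s}} \Bigr)^{-1}, \qquad L_y(s,\Theta) = \prod_{p \le y} \Bigl( 1- \frac{2\cos\Theta_p}{p^s}+ \frac{1}{p^{2s}} \Bigr)^{-1}.
\]
Fix a trigonometric test function $\Phi(w) = \exp(i\xi u + i\eta v)$, $w = u+iv$, and expand $\Phi(\log L_y(s,f))$ into a finite linear combination of multiplicative coefficients $a(n,f)$ with $n$ supported on $y$-smooth integers. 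Term-by-term application of \eqref{eq220321}, combined with the weight estimate \eqref{eq220322} to absorb the $p=q$ factor, yields
\[
\lim_{q\to\infty} \sum_{f \in B_2(q)} \omega_q(f)\, \Phi(\log L_y(s,f)) = \mathbb{E}[\Phi(\log L_y(s,\Theta))]
\]
for each fixed $y$; L\'evy's continuity theorem then delivers weak convergence of the truncated distributions as $q \to \infty$.

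\textbf{Letting $y \to \infty$.} To promote this to $L$ itself, I would prove the mean-square approximation
\[
\limsup_{q\to\infty} \sum_{f \in B'_2(q,s)} \omega_q(f) \bigl| \log L(s,f) - \log L_y(s,f) \bigr|^2 \longrightarrow 0 \quad (y \to \infty),
\]
together with its model counterpart (which follows from pairwise orthogonality ensured by \eqref{eq220323}, via Proposition~\ref{lemIM}). For $\sigma > 1$ this is immediate from absolute convergence; at $\sigma = 1$ it reduces to the large sieve input just cited; for $1/2 < \sigma < 1$ without GRH one invokes a zero-density estimate for the family $\{L(s,f) : f \in B_2(q)\}$, which quantifies the exceptional set $B_2(q) \setminus B'_2(q,s)$. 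Under GRH, the explicit formula yields the bound uniformly in $f$ at every $\sigma > 1/2$. This mean-square step is the principal obstacle: without GRH one cannot control unbounded $\Phi$ in the strip $1/2 < \sigma < 1$, which is precisely why the unconditional bullet in each case is restricted to $\Phi \in C_b$.

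\textbf{Extension to the full test function classes.} Combining the two steps yields weak convergence of the measures $\mu_{q,s}(A) = \sum_{f \in B'_2(q,s)} \omega_q(f)\, 1_A(\log L(s,f))$ to the law of $\log L(s,\Theta)$, which by Theorem~\ref{thmPDF} is absolutely continuous with continuous density $\mathcal{M}_s(\,\cdot\,,\Theta)$. Every continuity set therefore has null boundary under the limit, so the Portmanteau theorem delivers \eqref{eq111435} for $\Phi \in C_b(\mathbb{C}) \cup \mathcal{I}(\mathbb{C})$. For unbounded $\Phi$, truncate $\Phi_R(w) = \Phi(w)\, 1_{|w| \le R}$, apply the bounded case, and estimate the tails via uniform moment bounds on $\log L(s,f)$: Dirichlet series bounds give all moments when $\sigma > 1$; at $\sigma = 1$ a short Dirichlet polynomial computation yields polynomial moments (hence $C^{\mathrm{poly}}$); and GRH upgrades this to exponential moments at every $\sigma > 1/2$ (hence $C^{\exp}$). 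Part~(ii) follows from the same scheme restricted to $\mathbb{R}$, the stronger class $C^{\exp}(\mathbb{R})$ at $\sigma = 1$ reflecting sub-Gaussian concentration of the real random variable $\log L(1,f)$ obtainable via Selberg-type moment estimates.
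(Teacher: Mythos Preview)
Your outline is broadly correct and shares the paper's architecture (characteristic-function convergence, then extension to larger test classes via moment bounds), but the execution differs in two notable ways.

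First, you run a double limit: fix a truncation parameter $y$, send $q\to\infty$, then send $y\to\infty$ via a mean-square tail bound. The paper instead couples the two scales by taking the Dirichlet-polynomial truncation $R_Y(s,f)$ with $Y=(\log q)^B$ growing with $q$, and proves a single quantitative asymptotic (Proposition~\ref{propCM1}) comparing $\sum_f \omega_q(f)\psi_{z,z'}(\log L(s,f))$ directly with $\widetilde{\mathcal{M}}_s(z,z',\Theta)$, uniformly for $|z|\le a_1 R_\sigma(q)$. This coupled approach is what later feeds into the discrepancy bounds of Theorem~\ref{thmDB}; your double-limit scheme would establish the limit formula but would not by itself give rates. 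Second, for the passage from characters to unbounded $\Phi$, you truncate $\Phi$ by hand and control tails; the paper invokes the Ihara--Matsumoto lemma (Lemma~\ref{lemIhaMat}), which packages exactly this argument once one verifies the uniform bound~\eqref{eq112209} and the integrability~\eqref{eq112210}. The required uniform bounds come from Corollary~\ref{corUB} together with pointwise estimates on $\log L(s,f)$ (zero-free region at $\sigma=1$, Cogdell--Michel at $s=1$ real, GRH for the strip).

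One imprecision worth flagging: you write that $\Phi(\log L_y(s,f))$ expands as a \emph{finite} linear combination of coefficients $a(n,f)$. It does not; even for fixed $y$ the logarithm of each local factor is an infinite series in $a(p^m,f)$, and exponentiating compounds this. The paper handles this by truncating the Taylor series of the exponential at height $N\asymp \log q/\log\log q$ (the $g^\flat/g^\#$ decomposition of Section~\ref{sec3.2}) and bounding the tail via the moment estimates of Lemma~\ref{lemIM} and Corollary~\ref{corIM}. Your sketch would need the same device, or an approximation of $\Phi(\log L_y)$ by polynomials in $\cos\theta_p(f)$ on the compact cube $[0,\pi]^{\pi(y)}$, before \eqref{eq220321} can be applied termwise.
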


Theorem \ref{thmMV} presents an analogue of limit formulas \eqref{eq220002} and \eqref{eq220003} for automorphic $L$-functions if the test function $\Phi$ is taken from $\mathcal{I}(S)$. 
Moreover, we calculate the complex moments of $L(s,f)$ as
\[
\lim_{\substack{ q \to\infty \\ \text{$q$: prime} }}
\sum_{ f \in B_2(q)} \omega_q(f) \overline{L(s,f)}^{z} L(s,f)^{z'}
=\int_{\mathbb{C}} e^{z\overline{w}+z'w} \mathcal{M}_s(w,\Theta) \,|dw| 
\]
for any $z,z' \in \mathbb{C}$ in the case $s=\sigma+it \notin \mathbb{C}$, where $\sigma>1$ unconditionally; and $\sigma>1/2$ if we assume GRH. 
Similarly, we obtain
\[
\lim_{\substack{ q \to\infty \\ \text{$q$: prime} }}
\sum_{f \in B_2(q)} \omega_q(f) \overline{\log{L}(s,f)}^{z} (\log{L}(s,f))^{z'}
=\int_{\mathbb{C}} \overline{w}^z w^{z'} \mathcal{M}_s(w,\Theta) \,|dw| 
\]
where $\sigma \geq1$ unconditionally; and $\sigma>1/2$ under GRH. 
A similar result was found by Ihara--Murty--Shimura \cite{IharaMurtyShimura2009} for the logarithmic derivatives of Dirichlet $L$-functions. 
Then, we derive discrepancy estimates similar to \eqref{eq231327}. 
We define 
\begin{gather}\label{eq290016}
D_s(q,\Theta,\omega_q)
=\sup_{R \subset \mathbb{C}}
\Bigg| \sum_{f \in B'_2(q,s)} \omega_q(f) 1_R\left(\log{L}(s,f)\right)
-\int_{R} \mathcal{M}_s(w,\Theta) \,|dw| \Bigg|
\end{gather}
if $s=\sigma+it \in \mathbb{C}$ with $\sigma>1/2$ and $t \neq 0$, where $R$ runs through all rectangles on $\mathbb{C}$ with sides parallel to the axes. 
Similarly, we define
\begin{gather}\label{eq290017}
D_\sigma(q,\Theta,\omega_q)
=\sup_{[a,b] \subset \mathbb{R}}
\Bigg| \sum_{f \in B'_2(q,\sigma)} \omega_q(f) 1_{[a,b]}\left(\log{L}(\sigma,f)\right)
-\int_{a}^{b} \mathcal{M}_\sigma(u,\Theta) \,|du| \Bigg|
\end{gather}
if $s=\sigma>1/2$. 
Remark that we use the same notation $D_s(q,\Theta,\omega_q)$ to indicate discrepancies \eqref{eq290016} and \eqref{eq290017} for simplicity. 

\begin{theorem}\label{thmDB}
Let $\Theta$ and $\omega_q$ be as in Theorem \ref{thmMV}. 
Then we obtain
\[
D_s(q,\Theta,\omega_q)
\ll
\begin{cases}
(\log{q})^{-1} \log\log{q} 
& \text{if $\sigma>1$, } \\
(\log{q})^{-1} \log\log{q} \log\log\log{q} 
& \text{if $\sigma=1$, } \\
(\log{q})^{-\sigma} 
& \text{if $1/2<\sigma<1$ }
\end{cases}
\]
in both cases of $s=\sigma+it \notin \mathbb{R}$ and $s=\sigma \in \mathbb{R}$. 
Here, the implied constants depend only on $s$ and the choice of $\Theta$. 
\end{theorem}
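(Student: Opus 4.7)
The plan is to adapt to the level aspect the Fourier-analytic approach of Lamzouri--Lester--Radziwi{\l\l}~\cite{LamzouriLesterRadziwill2019}, replacing the $t$-integration by the weighted sum over $f \in B'_2(q,s)$ via the trace formula \eqref{eq220321}. First I would invoke a Beurling--Selberg / Esseen-type smoothing inequality which, in the complex case, bounds
\[
D_s(q,\Theta,\omega_q)
\ll \int_{|z|\leq T} \frac{|\Psi_q(z)-\Psi_\Theta(z)|}{\max(|z|,1)^2}\,|dz| + \frac{1}{T},
\]
and analogously the one-dimensional Esseen inequality for $s=\sigma$ real. Here $\Psi_q(z)=\sum_{f \in B'_2(q,s)}\omega_q(f)\exp(i\langle z,\log{L}(s,f)\rangle)$ and $\Psi_\Theta(z)$ is the corresponding characteristic function of $\log{L}(s,\Theta)$. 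The target threshold $T$ is of order $\log{q}$, $\log{q}/\log\log{q}$, or $(\log{q})^\sigma$ according to the three ranges.

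Next I would truncate the Euler factors: for a parameter $X=X(q)$ (a small power of $\log{q}$) put $\log{L}_X(s,f)=\sum_{p^m\leq X} b(p^m,f)p^{-ms}$ and define $\log{L}_X(s,\Theta)$ analogously. Expanding $\exp(i\langle z,\log{L}_X\rangle)$ and its random counterpart as power series, each mixed moment
\[
\sum_{f}\omega_q(f)\,\log{L}_X(s,f)^{a}\,\overline{\log{L}_X(s,f)}^{b} - \mathbb{E}\bigl[\log{L}_X(s,\Theta)^a\overline{\log{L}_X(s,\Theta)}^b\bigr]
\]
reduces to a finite linear combination of $\sum_f\omega_q(f)a(n,f)-\mathbb{E}[a(n,\Theta)]$ with $n\leq X^{C(a+b)}$, which \eqref{eq220321} bounds by $O(X^{C(a+b)}q^{-\beta})$. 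Pushing $a+b$ up to $K\asymp \log{q}/\log\log{q}$ while keeping $X^{CK}=q^{o(1)}$ produces a matching of the two characteristic functions on $|z|\leq T$ with error much smaller than $T^{-1}$.

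The remaining ingredient is the tail $\log{L}(s,f)-\log{L}_X(s,f)$ (and its random analogue). For $\sigma>1$ this is trivial from absolute convergence and $|a(n,f)|\leq d(n)$. For $1/2<\sigma<1$ without GRH, one must exploit the set $G_f$ in \eqref{eq220330}: a zero-density estimate for the family $B_2(q)$ shows that the $\omega_q$-weight of $f\in B_2(q)\setminus B'_2(q,s)$ together with the $f$ for which $\log{L}(s,f)$ is not well approximated by $\log{L}_X(s,f)$ is $O((\log{q})^{-\sigma})$, which is absorbed into the target bound; the required mean-square input comes from the large sieve Proposition~\ref{lemIM} and Corollary~\ref{corIM}. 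Under GRH a Selberg-type explicit formula delivers the approximation for every $f$, and allows $T$ to be pushed up to $(\log{q})^{\sigma}$. The case $\sigma=1$ requires a finer analysis of primes in a range just above $X$, in the spirit of Granville--Soundararajan, to extract the $\log\log{q}\,\log\log\log{q}$ factor.

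The step I expect to be the main obstacle is calibrating the three parameters $X$, $K$, and $T$ against each other in the range $1/2<\sigma\leq 1$, so that the trace-formula error $X^{CK}q^{-\beta}$, the truncation tail, and the Esseen remainder $1/T$ all balance to yield the stated exponent. The threshold $K\asymp \log{q}/\log\log{q}$ is essentially forced by the constraint $X^{CK}=q^{o(1)}$, and then $T$ is pushed as far as the tail estimate permits; extracting the extra $\log\log\log{q}$ factor at $\sigma=1$ will demand a careful dyadic decomposition over primes rather than a crude $L^2$ bound, and propagating the resulting bound through the Esseen step is the most technical part of the argument.
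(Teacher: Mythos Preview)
Your proposal is correct and follows essentially the same route as the paper: the paper packages your truncation, moment comparison via \eqref{eq220321}, and zero-density tail control into Proposition~\ref{propCM1}, and then feeds the resulting characteristic-function estimate into Esseen's inequality (Lemma~\ref{lemEsseen1}) and its two-dimensional analogue due to Sadikova (Lemma~\ref{lemEsseen2}), taking $T=R\asymp R_\sigma(q)$ as in \eqref{eq301946}. One small correction: the extra $\log\log\log q$ at $\sigma=1$ does not require a Granville--Soundararajan argument but comes directly from the pointwise bound $|R_Y(1+it,\cdot)|\ll \log\log Y$ in Lemma~\ref{lemIM} and Corollary~\ref{corIM}, which forces $R_1(q)=(\log q)(\log\log q\,\log\log\log q)^{-1}$.
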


\subsection{Notes on related results}\label{sec1.2}
The method of extending the result of Bohr--Jessen \cite{BohrJessen1930, BohrJessen1932} or Chowla--Erd\H{o}s \cite{ChowlaErdos1951} for higher degree $L$-functions has been developed by many researchers. 
Regarding the results in $t$-aspect, Matsumoto obtained analogues of formula \eqref{eq220002} for $L$-functions of holomorphic primitive cusp forms \cite{Matsumoto1989}, Dedekind zeta-functions \cite{Matsumoto1991, Matsumoto1992}, and more generally, a class of $L$-functions with polynomial Euler products \cite{Matsumoto1990}. 
Except for the case in $t$-aspect, the study of the value-distributions of $L$-functions of high degrees was initiated by Luo \cite{Luo1999}, who considered symmetric square $L$-functions of Maass forms in the aspect of Laplacian eigenvalues. 
He succeeded to show an analogue of \eqref{eq220003} in this case. 
Royer \cite{Royer2001} and Fomenko \cite{Fomenko2003} proved similar results with holomorphic cusp forms. 
Note that their methods relies on the calculations of all integral moments such as  
\[
\lim_{\substack{ q \to\infty \\ \text{$q$: prime} }}
\frac{1}{\#{B}_2(q)} \sum_{f \in B_2(q)} L(1,f)^m
=B_m,  
\quad m\in\mathbb{Z}_{\geq1}
\]
with some constants $B_m>0$. 
Hence the results were restricted to the values at $s=1$ and did not yields the discrepancy bounds. 
Cogdell--Michel \cite{CogdellMichel2004}, Golubeva \cite{Golubeva2004}, and Fomenko \cite{Fomenko2005} calculated complex moments of $L$-functions of holomorphic cusp forms. 
As applications of the results, they estimated 
\begin{gather}\label{eq231702}
D_1(q,\Theta,\omega_q)
\ll (\log{q})^{-1} \log\log{q} \log\log\log{q}
\end{gather}
with $\Theta=\Theta^1$, $\omega_q=(4 \pi \langle f,f \rangle)^{-1}$ in \cite[Corollary 1.16]{CogdellMichel2004}; and 
\begin{gather}\label{eq231703}
D_1(q,\Theta,\omega_q)
\ll (\log{q})^{-1/2} \log\log{q}
\end{gather}
with $\Theta=\Theta^2$, $\omega_q=\# B_2(q)^{-1}$ in \cite[Theorem 1]{Golubeva2004}. 
Theorem \ref{thmDB} generalizes and improves discrepancy estimates \eqref{eq231702} and \eqref{eq231703}. 

Next, we recall the results of ``$M$-functions'' which are density functions closely related to the value-distributions of $L$-functions. 
Ihara--Matsumoto \cite{IharaMatsumoto2011a, IharaMatsumoto2011b} proved that various mean values of $\zeta(\sigma+it)$ in $t$-aspect or $L(s,\chi)$ in $\chi$-aspect are represented as integrals involving the associated $M$-functions. 
Inspired by their work, several researchers studied certain mean values of automorphic $L$-functions in the level aspect. 
Then, attempts to obtain analogues of the results proved in \cite{IharaMatsumoto2011a, IharaMatsumoto2011b} were partially succeeded. 
Actually, Matsumoto--Umegaki \cite{MatsumotoUmegaki2018} proved the formula
\begin{align*}
&\lim_{\substack{ q \to\infty \\ \text{$q$: prime} }}
\sum_{ f \in B_k(q)} \frac{\Gamma(k-1)}{(4\pi)^{k-1} \langle f,f \rangle}
\Phi\left( \log{L}_{\mathbb{P}(q)}(\sigma,\sym^{\mu} f)
-\log{L}_{\mathbb{P}(q)}(\sigma,\sym^{\nu} f) \right)\\
&\qquad
=\int_{\mathbb{R}} \Phi(u) \mathcal{M}_\sigma(u) \,|du|
\end{align*}
with $\mu=\nu+2 \geq3$, where ${L}_{\mathbb{P}(q)}(s,\sym^{\gamma} f)$ is a slightly modified symmetric power $L$-function of $f$. 
The test function $\Phi$ is taken from ${C}_b(\mathbb{R})\cup\mathcal{I}(\mathbb{R})$. 
In this result, the $M$-function $\mathcal{M}_\sigma$ satisfies
\[
\frac{1}{\sqrt{2\pi}} \mathcal{M}_\sigma(u)
=\int_{-\infty}^{\infty} F_\sigma(u+iv) \,dv, 
\]
where $F_\sigma$ is the density function of \eqref{eq240107}. 
They also considered the case $f$ runs over the set $B_k(q^m)$ with $m \geq1$; see \cite[Theorem 1.5]{MatsumotoUmegaki2018} for the precise statement. 
Another result was obtained by Lebacque--Zykin \cite{LebacqueZykin2018}. 
They proved
\begin{align}\label{eq240135}
&\lim_{\substack{ q \to\infty \\ \text{$q$: prime} }}
\sum_{f \in B_k(q)} \frac{\Gamma(k-1)}{(4\pi)^{k-1} \langle f,f \rangle}
\overline{L(s,f)^{\frac{iz}{2}}} L(s,f)^{\frac{iz'}{2}} \\
&\qquad
=\sum_{m,n \in \mathbb{N}} m^{-\overline{s}} n^{-s} \sum_{x \in J(m) \cap J(n)}
\overline{c_{z,x}(m)} c_{z',x}(n) \nonumber
\end{align}
for $z,z' \in \mathbb{C}$ under GRH, where the coefficient $c_{z,x}(n)$ and the subset $J(n) \subset \mathbb{N}$ can be explicitly calculated. 
However, they did not find a suitable $M$-function due to the complexity of the right-hand side of \eqref{eq240135}. 
In this paper, we construct the $M$-function $\mathcal{M}_\sigma(\,\cdot\,,\Theta)$ from the random Euler product $L(s,\Theta)$. 
Then we establish a better analogue of the results of Ihara--Matsumoto \cite{IharaMatsumoto2011a, IharaMatsumoto2011b} as Theorem \ref{thmMV}.

\vspace{\baselineskip}

This paper consists of four sections. 
First, we introduce Propositions \ref{propUB} and \ref{propCM1} which are key tools for the proofs of the results of Section \ref{sec1}. 
In Section \ref{sec2}, we prove Proposition \ref{propUB} and explain how Proposition \ref{propCM1} is proved. 
Then we approximate $\log{L}(s,f)$ and $\log{L}(s,\Theta)$ by Dirichlet polynomials. 
The proof of Proposition \ref{propCM1} is completed in Section \ref{sec3}. 
In Section \ref{sec4}, we deduce Theorems \ref{thmPDF}, \ref{thmMV}, and \ref{thmDB} from the key propositions. 
The methods are based on the results of Probability Theory, e.g.\ L\'{e}vy's inversion formula, the continuity theorem, and Esseen's inequality.

\section{Key propositions}\label{sec2}
According to Ihara--Matsumoto \cite{IharaMatsumoto2011b}, we introduce a quasi-character $\psi_{z,z'}$ of the additive group $\mathbb{C}$ such that
\[
\psi_{z,z'}(w)
=\exp\left( \frac{i}{2}(z\overline{w}+z'w) \right) 
\]
for $z,z'\in\mathbb{C}$. 
Then we begin with studying the function
\[
\widetilde{\mathcal{M}}_s(z,z',\Theta)
=\mathbb{E}\left[\psi_{z,z'}(\log{L}(s,\Theta))\right]. 
\]
We show that $\widetilde{\mathcal{M}}_s(z,z',\Theta)$ is defined for any $\RE(s)>1/2$ if $\Theta=(\Theta_p)$ satisfies condition \eqref{eq220319}; see Lemmas \ref{lemMGF} and \ref{lemIP}. 
In particular, the function 
\[
\widetilde{\mathcal{M}}_s(z,\overline{z},\Theta)
=\mathbb{E}\left[ \exp( i \RE{z} \RE\log{L}(s,\Theta)
+i \IM{z} \IM\log{L}(s,\Theta) ) \right]
\]
is the characteristic function of the random variable $\log{L}(s,\Theta)$. 
Remark that, if $s=\sigma \in \mathbb{R}$, then $\widetilde{\mathcal{M}}_\sigma(z,\overline{z},\Theta)$ does not depend on the imaginary part of $z$. 
Indeed, it is represented as
\[
\widetilde{\mathcal{M}}_\sigma(z,\overline{z},\Theta)
=\mathbb{E}\left[\exp(ix \log{L}(\sigma,\Theta))\right] 
\]
for $z=x+iy$. 
The first key step is to estimate the decay of the characteristic function $\widetilde{\mathcal{M}}_s(z,\overline{z},\Theta)$ as $|z|\to\infty$. 

\begin{proposition}\label{propUB}
Suppose that $\Theta=(\Theta_p)$ satisfies conditions \eqref{eq220319} and \eqref{eq220320}. 
Then we have the following results. 
\begin{itemize}
\item[$(\mathrm{i})$]
Let $s=\sigma+it$ be a fixed complex number with $\sigma>1/2$ and $t \neq 0$. 
There exists a positive constant $c(s)$ such that the inequality
\[
\left| \widetilde{\mathcal{M}}_s(z,\overline{z},\Theta) \right|
\leq\exp\left( -c(s) \frac{|z|^{1/\sigma}}{\log|z|} \right)
\]
holds for all $z=x+iy \in \mathbb{C}$ with $|z|\geq3$. 
\item[$(\mathrm{ii})$]
Let $\sigma>1/2$ be a fixed real number. 
There exists a positive constant $c(\sigma)$ such that the inequality
\[
\left| \widetilde{\mathcal{M}}_\sigma(z,\overline{z},\Theta) \right|
\leq\exp\left( -c(\sigma) \frac{|x|^{1/\sigma}}{\log|x|} \right)
\]
holds for all $z=x+iy \in \mathbb{C}$ with $|x|\geq3$. 
\end{itemize}
\end{proposition}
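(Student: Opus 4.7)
The plan is to exploit the independence of the $\Theta_p$ to factor the characteristic function and to bound each local factor by a Taylor-expansion argument. By the Euler product,
\[
|\widetilde{\mathcal{M}}_s(z, \overline{z}, \Theta)| = \prod_p \bigl|\mathbb{E}\bigl[\exp\bigl(i\RE(\overline{z} \log L_p(s, \Theta_p))\bigr)\bigr]\bigr|,
\]
where $L_p(s,\theta) = (1 - 2\cos\theta/p^s + p^{-2s})^{-1}$. Expanding the Euler factor gives $\log L_p(s,\theta) = 2\cos\theta/p^s + O(p^{-2\sigma})$ uniformly in $\theta$, so the phase in each local factor equals $\mu_p \cos\Theta_p + O(|z|/p^{2\sigma})$, where $\mu_p := 2\RE(\overline{z}/p^s)$. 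This is $\mu_p = 2x/p^\sigma$ in case (ii), and $\mu_p = (2|z|/p^\sigma)\cos(\arg z + t \log p)$ in case (i).

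Fix the threshold $Y = (C|z|)^{1/\sigma}$ in case (i) and $Y = (C|x|)^{1/\sigma}$ in case (ii), with $C$ large enough that $|\mu_p| \leq 1$ and the remainder $|z|/p^{2\sigma}$ is negligible for $p \geq Y$; for $p < Y$ I would use only the trivial bound by $1$. For $p \geq Y$, I would apply the elementary inequality
\[
|\mathbb{E}[e^{iX}]|^2 = \mathbb{E}[\cos(X - X')] \leq 1 - \tfrac{1}{2}\mathbb{E}[(X-X')^2] + \tfrac{1}{24}\mathbb{E}[(X-X')^4]
\]
(with $X'$ an independent copy of $X$), taking $X$ to be the phase above. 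Conditions \eqref{eq220319} and \eqref{eq220320} give $\mathrm{Var}(\cos\Theta_p) \geq \delta/2$ for $p$ large, hence $\mathbb{E}[(X-X')^2] \gg \mu_p^2$; since $|\mu_p| \leq 1$, the fourth-moment term is comparable to $\mu_p^2 \cdot \mathbb{E}[(X-X')^2]$ and is absorbed. This yields $|\text{local factor}| \leq \exp(-c \mu_p^2)$ for an absolute $c > 0$, and therefore
\[
|\widetilde{\mathcal{M}}_s(z, \overline{z}, \Theta)| \leq \exp\Bigl(-c\sum_{p \geq Y} \mu_p^2\Bigr).
\]

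It remains to prove $\sum_{p \geq Y} \mu_p^2 \gg |z|^{1/\sigma}/\log|z|$ (respectively $|x|^{1/\sigma}/\log|x|$). In case (ii), $\mu_p^2 = 4x^2/p^{2\sigma}$, so the claim follows from $\sum_{p \geq Y} p^{-2\sigma} \asymp Y^{1-2\sigma}/\log Y$ by the Prime Number Theorem. In case (i), write $\cos^2(\arg z + t\log p) = \tfrac{1}{2} + \tfrac{1}{2}\cos(2\arg z + 2t \log p)$, splitting the sum into a positive main part $2|z|^2 \sum_{p \geq Y} p^{-2\sigma}$ and an oscillating tail $2|z|^2 \RE\bigl(e^{2i\arg z}\sum_{p \geq Y} p^{-2\sigma-2it}\bigr)$. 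Partial summation with the PNT gives the asymptotic $\sum_{p \geq Y} p^{-s'} \sim Y^{1-s'}/((s'-1)\log Y)$ for $\RE(s') > 1$; applied with $s' = 2\sigma + 2it$, this bounds the oscillating tail by a factor $(2\sigma-1)/\sqrt{(2\sigma-1)^2 + 4t^2} < 1$ times the main part, leaving a positive fraction of the latter, with the resulting constant depending on $s$.

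The principal obstacle is the oscillation estimate in case (i): the triangle inequality only gives $|\sum_{p \geq Y} p^{-2\sigma-2it}| \leq \sum_{p \geq Y} p^{-2\sigma}$, which is insufficient, so one must genuinely extract cancellation from $t \neq 0$ via the PNT-based asymptotic above. This is precisely where the dependence of $c(s)$ on the imaginary part $t$ (worsening as $|t| \to 0$) enters; all other ingredients—the independence factorization, the local Taylor expansion, and the variance lower bound from \eqref{eq220320}—are elementary.
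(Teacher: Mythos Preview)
Your proposal is correct and follows essentially the same route as the paper: factor via independence, discard primes below a threshold $Y\asymp|z|^{1/\sigma}$, Taylor-expand each local factor for $p\geq Y$ to extract a term $\asymp -\delta\mu_p^2$ using \eqref{eq220320}, and then sum $\mu_p^2$ over $p\geq Y$ by the Prime Number Theorem, controlling the oscillating part in case~(i) via $|2s-1|>2\sigma-1$ when $t\neq0$. The only cosmetic difference is that you use the symmetrization identity $|\mathbb{E}[e^{iX}]|^2=\mathbb{E}[\cos(X-X')]$ to kill the first-order term, whereas the paper expands $\widetilde{\mathcal{M}}_{s,p}$ directly and takes $\Log$, invoking \eqref{eq220319} to dispose of the linear term; both devices lead to the same quadratic bound on $\log|\widetilde{\mathcal{M}}_{s,p}|$.
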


As the second key step, we associate the mean value of $\psi_{z,z'}(\log{L}(s,f))$ with the function $\widetilde{\mathcal{M}}_s(z,z',\Theta)$ for $z,z'\in\mathbb{C}$ when they satisfy $z'=\pm z$ or $\overline{z}$. 
According to the method of Lamzouri--Lester--Radziwi{\l\l} \cite{LamzouriLesterRadziwill2019}, we consider the mean value 
\[
\widetilde{\mathcal{M}}_{s,q}(z,z',\omega_q)^{\mathcal{E}}
=\sum_{f \in B_2(q) \setminus \mathcal{E}} \omega_q(f) \psi_{z,z'}(\log{L}(s,f))
\]
with an exceptional subset $\mathcal{E}=\mathcal{E}(q,s)$ of $B_2(q)$. 

\begin{proposition}\label{propCM1}
Let $s=\sigma+it$ be a fixed complex number with $\sigma>1/2$, and let $B \geq1$. 
Suppose that $\Theta=(\Theta_p)$ and $\omega_q$ satisfies conditions \eqref{eq220323}, \eqref{eq220321}, and \eqref{eq220322}. 
Then there exist positive constants $a_1=a_1(\sigma,B)$ and $b_1=b_1(\sigma,B)$ such that 
\begin{gather}\label{eq302035}
\widetilde{\mathcal{M}}_{s,q}(z,z',\omega_q)^{\mathcal{E}}
=\widetilde{\mathcal{M}}_s(z,z',\Theta)
+O\left(\frac{1+E_s(z,z'\Theta)}{(\log{q})^B}\right) 
\end{gather}
for all $z,z'\in\mathbb{C}$ satisfying $z'=\pm z$ or $\overline{z}$ in the range $|z| \leq a_1R_\sigma(q)$, where we put 
\begin{align}
&E_s(z,z',\Theta)
=\mathbb{E}\left[|\psi_{z,z'}(\log{L}(s,\Theta))|\right], \label{eq301947} \\
&R_\sigma(q) 
=
\begin{cases}
(\log{q}) (\log\log{q})^{-1} 
&{\text{if $\sigma>1$}}, \\
(\log{q}) (\log\log{q} \log\log\log{q})^{-1} 
&{\text{if $\sigma=1$}}, \\
(\log{q})^\sigma 
&{\text{if $1/2<\sigma<1$}}. 
\end{cases} \label{eq301946}
\end{align}
Here, $\mathcal{E}=\mathcal{E}(q,s)$ is a subset of $B_2(q)$ satisfying
\begin{gather}\label{eq302036}
\frac{\#\mathcal{E}(q,s)}{q}
\ll \exp\left( -b_1 \frac{\log{q}}{\log\log{q}} \right). 
\end{gather}
The implied constants in \eqref{eq302035} and \eqref{eq302036} depend only on $s$ and $B$. 
\end{proposition}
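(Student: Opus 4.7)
\emph{Proof proposal for Proposition \ref{propCM1}.}
The plan is to use a Dirichlet polynomial approximation together with a moment-matching argument, in the spirit of Lamzouri--Lester--Radziwi{\l\l} \cite{LamzouriLesterRadziwill2019} but carried out in the $f$-aspect via the trace relation \eqref{eq220321}. Choose a truncation parameter $y=y(q,\sigma,B)$ (a small power of $\log q$), set $b(p^m,\Theta)=2\cos(m\Theta_p)/m$, and define
\[
P_y(s,f)=\sum_{p^m\leq y}\frac{b(p^m,f)}{p^{ms}},\qquad P_y(s,\Theta)=\sum_{p^m\leq y}\frac{b(p^m,\Theta)}{p^{ms}}.
\]
First I would check that the tail $\log L(s,\Theta)-P_y(s,\Theta)$ has small moments (using condition \eqref{eq220319} prime-by-prime) so that $\widetilde{\mathcal{M}}_s(z,z',\Theta)$ may be replaced by $\mathbb{E}[\psi_{z,z'}(P_y(s,\Theta))]$ with an error of the desired shape $(\log q)^{-B}(1+E_s(z,z',\Theta))$. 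Dually, I would define $\mathcal{E}(q,s)$ as the union of those $f\in B_2(q)$ with $s\notin G_f$ and those for which $|\log L(s,f)-P_y(s,f)|$ exceeds a small threshold. The bound \eqref{eq302036} would come from a zero-density estimate for the family $\{L(\cdot,f):f\in B_2(q)\}$ (handling the unconditional range $1/2<\sigma<1$) together with the large sieve inequality stated in Proposition \ref{lemIM} and Corollary \ref{corIM}, whose hypothesis is furnished by \eqref{eq220323}. Outside $\mathcal{E}$ one then replaces $\psi_{z,z'}(\log L(s,f))$ by $\psi_{z,z'}(P_y(s,f))$ within the weighted sum.

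The core of the argument is the moment comparison for the truncated polynomials. Expanding
\[
\psi_{z,z'}(P_y(s,f))=\prod_{p^m\leq y}\exp\!\left(\tfrac{i}{2}\bigl(z\,\overline{b(p^m,f)/p^{ms}}+z'\,b(p^m,f)/p^{ms}\bigr)\right)
\]
in Taylor series and using the Hecke multiplicativity \eqref{eq070007} rewrites $\sum_f\omega_q(f)\psi_{z,z'}(P_y(s,f))$ as a finite sum $\sum_n c_n(z,z',s)\sum_f\omega_q(f)\,a(n,f)$, whose random analogue is $\sum_n c_n(z,z',s)\,\mathbb{E}[a(n,\Theta)]$. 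Relation \eqref{eq220321} then yields a term-by-term error of size $O(n^\alpha q^{-\beta})$. The hypothesis $|z|\leq a_1 R_\sigma(q)$ is precisely what is needed for the factorials in the Taylor expansion to kill all contributions with $n>q^{\beta/2}$, so that the cumulative error is $\ll(\log q)^{-B}(1+E_s(z,z',\Theta))$; the factor $1+E_s$ appears naturally as the random-side $\ell^1$-norm of the Taylor expansion.

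The hard part will be balancing three competing scales simultaneously: the truncation length $y$, the effective degree of the Taylor--Dirichlet expansion (which grows with $|z|$), and the density of $\mathcal{E}$. In the subcritical strip $1/2<\sigma<1$ without GRH, the zero-density estimate for the family is the most delicate ingredient, and matching it with the target threshold $R_\sigma(q)=(\log q)^\sigma$ while still keeping the Dirichlet expansion short enough to invoke \eqref{eq220321} is the tightest quantitative trade-off. The restriction $z'=\pm z$ or $\overline{z}$ enters at exactly this stage: it guarantees that $|\psi_{z,z'}(w)|$ depends on $w$ only through $\RE(w)$ or $\IM(w)$ with an exponential weight controlled by $\IM(z)$ or $\RE(z)$, so that a single quantity $E_s(z,z',\Theta)$ simultaneously controls the averages on the automorphic and the random sides.
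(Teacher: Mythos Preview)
Your strategy matches the paper's: truncate to $R_Y(s,\cdot)$ (your $P_y$), compare the truncated quantities via Taylor expansion and the trace relation \eqref{eq220321} (this is Propositions~\ref{propIM} and~\ref{propG+}), and pass back from $R_Y$ to $\log L$ on the random side via Lemma~\ref{lemM} and on the automorphic side via the exceptional set $\mathcal{E}_2$ of Lemma~\ref{lemES2}. The restriction $z'=\pm z$ or $\overline{z}$ is used exactly as you say, to make $|\psi_{z,z'}|$ itself equal to some $\psi_{\xi,\xi'}$ so that the same machinery bounds the absolute-value averages (this is how \eqref{eq100219} is obtained).

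There is, however, one concrete ingredient missing from your construction of $\mathcal{E}$. You define it only through the approximation error $|\log L(s,f)-P_y(s,f)|$ (the paper's $\mathcal{E}_2$, handled by zero density as you say). In the strip $1/2<\sigma<1$ the paper also removes
\[
\mathcal{E}_1=\bigl\{f\in B_2(q):\ |R_Y(s,f)|>(\log q)^{1-\sigma}(\log\log q)^{-1}\bigr\},
\]
whose cardinality is controlled by a large-sieve argument (Lemma~\ref{lemES1}); this is presumably what your reference to Proposition~\ref{lemIM} and Corollary~\ref{corIM} is gesturing towards, but it must enter the \emph{definition} of $\mathcal{E}$, not merely the cardinality estimate. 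The reason is the Taylor tail. The moment bound of Corollary~\ref{corIM} is valid only for exponents $k\le \log q/(L_2\log\log q)$, so once the Taylor series is cut at $N\asymp \log q/\log\log q$ (forced, as you correctly observe, by the constraint $n^{\alpha}\ll q^{\beta/2}$ in \eqref{eq220321}), the remainder $\sum_{k>N}\tfrac{1}{k!}(\tfrac{iz}{2})^k R_Y(s,f)^k$ cannot be handled by moments alone. One needs the pointwise bound $|R_Y(s,f)|\le (\log q)^{1-\sigma}/\log\log q$ available outside $\mathcal{E}_1$ to force $|z|\,|R_Y(s,f)|/N\ll a_1$ and make the factorials win; this is exactly Lemma~\ref{lemG-}. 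For $\sigma\ge 1$ the trivial bound $|R_Y|\ll\log\log Y$ already suffices, which is why the paper sets $\mathcal{E}_1=\emptyset$ there; but for $1/2<\sigma<1$ your assertion that ``the factorials kill all contributions with $n>q^{\beta/2}$'' does not go through without this extra excision.
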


\subsection{Proof of Proposition \ref{propUB}}\label{sec2.1}

\begin{lemma}\label{lemMGF}
Let $s=\sigma+it$ be a fixed complex number with $\sigma>1/2$. 
Suppose that $\Theta=(\Theta_p)$ satisfies condition \eqref{eq220319}. 
Then the expected values
\[
\mathbb{E}[\exp( a |\RE\log{L}(s,\Theta)| )]
\quad\text{and}\quad
\mathbb{E}[\exp( a |\IM\log{L}(s,\Theta)| )]
\]
are finite for any fixed real number $a \geq0$. 
\end{lemma}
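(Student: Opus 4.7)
The plan is to expand $\log L(s,\Theta)$ as a double sum over primes and integers $m\ge1$, isolate the deterministically bounded $m\ge2$ tail from the genuinely random $m=1$ part, and then control the latter through the product-of-Laplace-transforms identity for independent bounded random variables.

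First I would factor each local Euler factor as
$1-2\cos\Theta_p\,p^{-s}+p^{-2s}=(1-e^{i\Theta_p}p^{-s})(1-e^{-i\Theta_p}p^{-s})$,
which yields, at least formally,
\[
\log L(s,\Theta)=\sum_{p}\sum_{m=1}^{\infty}\frac{2\cos(m\Theta_p)}{m\,p^{ms}}.
\]
The $m\ge2$ contribution is dominated in absolute value by $\sum_p\sum_{m\ge2}\tfrac{2}{m}p^{-m\sigma}\ll\sum_p p^{-2\sigma}<\infty$ since $\sigma>1/2$, and is therefore an a.s.\ bounded $\mathbb{C}$-valued random variable that contributes only a harmless constant factor to any exponential moment.

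It then remains to handle $T:=\sum_p Y_p$ with $Y_p=2\cos\Theta_p/p^s$, so that the $Y_p$ are independent with $|Y_p|\le 2p^{-\sigma}$. Granting the passage to infinite products addressed below, independence gives
\[
\mathbb{E}\bigl[\exp(\pm a\,\RE T)\bigr]=\prod_{p}\mathbb{E}\bigl[\exp(\pm a\,\RE Y_p)\bigr],
\]
and similarly for $\IM T$. Applying the elementary inequality $e^{u}\le 1+u+\tfrac12 u^2 e^{|u|}$ with $|u|\le 2a/p^\sigma$, together with hypothesis \eqref{eq220319}, I obtain
\[
\log\mathbb{E}\bigl[\exp(\pm a\,\RE Y_p)\bigr]\ll_{a,\epsilon} p^{-\sigma-1/2+\epsilon}+p^{-2\sigma},
\]
and both series are summable over primes when $\sigma>1/2$ and $\epsilon$ is chosen small enough. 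Combining this one-sided bound with $e^{a|X|}\le e^{aX}+e^{-aX}$ yields the finiteness of $\mathbb{E}[\exp(a|\RE\log L(s,\Theta)|)]$, and the argument for the imaginary part is identical.

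The main obstacle I anticipate is justifying the passage from finite partial products to the infinite product when $1/2<\sigma\le 1$, because in this range $\sum_p Y_p$ converges only conditionally. I would circumvent this by working with truncations $T_X=\sum_{p\le X}Y_p$, for which the factorisation of the Laplace transform is immediate, establishing a.s.\ convergence $T_X\to T$ via Kolmogorov's three-series theorem (where the variance condition is $\sum_p p^{-2\sigma}<\infty$ and the mean condition is precisely \eqref{eq220319}), and then upgrading to convergence of $\mathbb{E}[\exp(\pm a\,\RE T_X)]$ by Fatou's lemma applied with the uniform bound on the finite products derived above. It is worth noting that hypothesis \eqref{eq220319} enters here in an essential way: it is exactly the condition ensuring that $\sum_p \mathbb{E}[Y_p]$ converges, and thus that the infinite product is well-defined.
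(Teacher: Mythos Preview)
Your proposal is correct and follows essentially the same approach as the paper: truncate, use independence to factor the Laplace transform into a product over primes, Taylor expand each local factor to get a $O_\epsilon(p^{-\sigma-1/2+\epsilon}+p^{-2\sigma})$ bound via \eqref{eq220319}, pass to the limit with Fatou's lemma, and finish with $e^{a|X|}\le e^{aX}+e^{-aX}$. The only cosmetic difference is that you first peel off the $m\ge2$ tail as a deterministically bounded piece, whereas the paper keeps the full local factor $-\log(1-2\cos\Theta_p\,p^{-s}+p^{-2s})$ intact and Taylor expands it directly; this yields the same error terms and saves you the separate three-series discussion, since the paper simply invokes the a.s.\ convergence $L_Y(s,\Theta)\to L(s,\Theta)$ already established in the setup.
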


\begin{proof}
To begin with, we show that the expected value $\mathbb{E}[\exp( b \RE\log{L}(s,\Theta)| )]$ is finite for any $b \in \mathbb{R}$. 
Let $L_Y(s,\Theta)$ be the partial Euler product
\begin{gather}\label{eq110122}
L_Y(s,\Theta)
=\prod_{p \leq Y} \left( 1- \frac{2\cos \Theta_p}{p^s}+ \frac{1}{p^{2s}} \right)^{-1}. 
\end{gather}
Since $L_Y(s,\Theta)$ converges to $L(s,\Theta)$ as $Y \to\infty$ almost surely, we obtain 
\begin{gather}\label{eq251719}
\mathbb{E}[\exp( b \RE \log{L}(s,\Theta) )]
\leq \liminf_{Y \to\infty} \mathbb{E}[\exp( b \RE \log{L}_Y(s,\Theta) )]
\end{gather}
by Fatou's lemma. 
From the independence of $\Theta=(\Theta_p)$, we deduce 
\[
\mathbb{E}[\exp( b \RE \log{L}_Y(s,\Theta) )]
=\prod_{p \leq Y}
\mathbb{E}\left[ \exp\left(-b \RE \log(1- 2(\cos \Theta_p)p^{-s}+ p^{-2s})\right) \right].  
\]
By the Taylor expansion, the asymptotic formula
\[
\exp\left(-b \RE \log(1- 2(\cos \Theta_p)p^{-s}+ p^{-2s})\right)
=1+ 2b(\cos \Theta_p) \RE(p^{-s})+O(p^{-2\sigma})
\]
holds for every $p$. 
Then, using \eqref{eq220319}, we obtain
\[
\mathbb{E}\left[ \exp\left(-b \RE \log(1-2(\cos \Theta_p)p^{-s}+ p^{-2s})\right) \right]
=1+O_\epsilon\left( p^{-(\sigma+\frac{1}{2})+\epsilon}+ p^{-2\sigma} \right). 
\]
The series $\sum_{p} p^{-(\sigma+\frac{1}{2})+\epsilon}$ and $\sum_{p} p^{-2\sigma}$ converge for $\sigma>1/2$ with $\epsilon>0$ small enough. 
Hence we conclude that $\mathbb{E}[\exp(b \RE \log{L}(s,\Theta))]$ is finite by \eqref{eq251719}. 
Then it yields
\begin{align*}
&\mathbb{E}[ \exp(a|\RE \log{L}(s,\Theta)|) ]\\
&\leq \mathbb{E}[ \exp(a \RE \log{L}(s,\Theta)) ]
+\mathbb{E}[ \exp(-a \RE \log{L}(s,\Theta)) ]
<\infty 
\end{align*}
as desired. 
The result for $\mathbb{E}[ \exp( a |\IM \log{L}(s,\Theta)|) ]$ is proved similarly. 
\end{proof}

\begin{lemma}\label{lemIP}
Let $s=\sigma+it$ be a fixed complex number with $\sigma>1/2$. 
Suppose that $\Theta=(\Theta_p)$ satisfies condition \eqref{eq220319}. 
Then the function $\widetilde{\mathcal{M}}_s(z,z',\Theta)$ is defined for any $z,z' \in \mathbb{C}$. 
Furthermore, it satisfies
\begin{gather}\label{eq252200}
\widetilde{\mathcal{M}}_s(z,z',\Theta)
=\prod_{p} \widetilde{\mathcal{M}}_{s,p}(z,z',\Theta) 
\end{gather}
for any $z,z' \in \mathbb{C}$, where $\widetilde{\mathcal{M}}_{s,p}(z,z',\Theta)$ is defined as 
\[
\widetilde{\mathcal{M}}_{s,p}(z,z',\Theta)
=\mathbb{E}\left[ \psi_{z,z'}\left(-\log(1-2(\cos\Theta_p)p^{-s}+ p^{-2s})\right) \right]. 
\]
\end{lemma}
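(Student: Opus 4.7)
The plan is to prove Lemma \ref{lemIP} in three steps: (a) show that $\widetilde{\mathcal{M}}_s(z,z',\Theta)$ is well-defined as a complex number, (b) verify the Euler-product factorization for the truncated product $L_Y(s,\Theta)$ of \eqref{eq110122}, and (c) pass to the limit $Y\to\infty$. For (a), writing $z=a+ib$, $z'=a'+ib'$, $w=u+iv$, a short computation yields
\[
|\psi_{z,z'}(w)| = \exp\Bigl(-\tfrac{1}{2}\bigl((b+b')u+(a'-a)v\bigr)\Bigr),
\]
so that with $w=\log L(s,\Theta)$ the crude bound
\[
|\psi_{z,z'}(\log L(s,\Theta))| \le \exp\Bigl(\tfrac{|b+b'|}{2}|\RE \log L(s,\Theta)|+\tfrac{|a'-a|}{2}|\IM \log L(s,\Theta)|\Bigr)
\]
follows. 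Applying the Cauchy--Schwarz inequality together with Lemma \ref{lemMGF} (used with $a=|b+b'|$ and $a=|a'-a|$) shows that this bound has finite expectation, so $\widetilde{\mathcal{M}}_s(z,z',\Theta)$ exists.

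For (b), I would write $\log L_Y(s,\Theta) = \sum_{p\le Y} w_p$ with $w_p = -\log(1-2(\cos\Theta_p)p^{-s}+p^{-2s})$ and exploit the fact that $\psi_{z,z'}$ is a character on $(\mathbb{C},+)$ to factor $\psi_{z,z'}(\log L_Y(s,\Theta)) = \prod_{p\le Y}\psi_{z,z'}(w_p)$. Taking expectation and using the independence of the $\Theta_p$ then gives
\[
\mathbb{E}[\psi_{z,z'}(\log L_Y(s,\Theta))] = \prod_{p\le Y}\widetilde{\mathcal{M}}_{s,p}(z,z',\Theta).
\]
For (c), the almost-sure convergence $\log L_Y(s,\Theta)\to\log L(s,\Theta)$ implies $\psi_{z,z'}(\log L_Y)\to\psi_{z,z'}(\log L)$ almost surely, so it remains to exchange limit and expectation. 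I would do this through uniform integrability, exploiting the identity $|\psi_{z,z'}(w)|^2 = |\psi_{2z,2z'}(w)|$: applying the factorization of step (b) to $(2z,2z')$ and Taylor-expanding $w_p = 2(\cos\Theta_p)p^{-s}+O(p^{-2\sigma})$ together with \eqref{eq220319} shows that each local factor contributing to $\mathbb{E}[|\psi_{z,z'}(\log L_Y)|^2]$ is $1+O_\epsilon(p^{-\sigma-1/2+\epsilon}+p^{-2\sigma})$; for $\sigma>1/2$ with $\epsilon$ sufficiently small, this product converges absolutely, giving $\sup_Y\mathbb{E}[|\psi_{z,z'}(\log L_Y)|^2]<\infty$.

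The main obstacle is step (c): because $\RE\log L_Y$ and $\IM\log L_Y$ can fluctuate unboundedly with $Y$ when $1/2<\sigma\le 1$, no pointwise dominating function is available for dominated convergence. The trick $|\psi_{z,z'}|^2=|\psi_{2z,2z'}|$ rescues the argument by converting the second-moment bound into another explicit Euler-product estimate of the same flavor as in the proof of Lemma \ref{lemMGF}.
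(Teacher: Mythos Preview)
Your proposal is correct, and steps (a) and (b) match the paper's argument. The difference lies in step (c). The paper does \emph{not} go through uniform integrability; instead it invokes dominated convergence directly. Since $\log L_Y(s,\Theta)\to\log L(s,\Theta)$ almost surely, for $Y$ large one has $|\RE\log L_Y|\le |\RE\log L|+1$ and $|\IM\log L_Y|\le |\IM\log L|+1$, so with $a=\max\{|z|,|z'|\}$ the bound
\[
|\psi_{z,z'}(\log L_Y(s,\Theta))|\le \exp\bigl(a|\RE\log L(s,\Theta)|\bigr)\exp\bigl(a|\IM\log L(s,\Theta)|\bigr)\exp(2a)
\]
holds almost surely, and the right-hand side is integrable by Lemma~\ref{lemMGF} and Cauchy--Schwarz. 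Thus your assertion that ``no pointwise dominating function is available'' is too pessimistic: the paper builds one out of the limit variable itself rather than trying to control each $L_Y$ separately.

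That said, your route via uniform integrability is a perfectly valid alternative, and arguably a bit more self-contained: it bounds $\sup_Y\mathbb{E}[|\psi_{z,z'}(\log L_Y)|^2]$ by the same Euler-product estimate already used in Lemma~\ref{lemMGF}, without ever appealing to properties of the limit $L(s,\Theta)$. The paper's DCT argument is shorter once Lemma~\ref{lemMGF} is in hand, but it leans on the slightly delicate point that the ``for $Y$ large'' in the domination is sample-dependent; your $L^2$ bound sidesteps that subtlety entirely. Either way the factorization in (b) is what ultimately drives the estimate, so the two approaches are close cousins.
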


\begin{proof}
Note that the quasi-character $\psi_{z,z'}$ satisfies
\begin{gather}\label{eq:06091335}
|\psi_{z,z'}(w)|
\leq \exp(a |\RE w|) \exp(a |\IM w|)
\end{gather}
for any $z,z',w \in \mathbb{C}$, where we put $a=\max\{|z|, |z'|\}$. 
Denote again by $L_Y(s, \Theta)$ the random variable defined as in \eqref{eq110122}. 
Since $L_Y(s,\Theta) \to L(s,\Theta)$ as $Y \to\infty$ almost surely, we obtain $|L_Y(s,\Theta)-L(s,\Theta)|\leq1$ almost surely when $Y$ is large. 
Therefore, we deduce from \eqref{eq:06091335} that the inequality
\begin{align*}
|\psi_{z,z'}(\log{L}_Y(s,\Theta))|
&\leq \exp(a |\RE \log{L}_Y(s,\Theta)|) \exp(a |\IM \log{L}_Y(s,\Theta)|) \\
&\leq \exp(a |\RE \log{L}(s,\Theta)|) \exp(a |\IM \log{L}(s,\Theta)|) \exp(2a)
\end{align*}
holds with $a=\max\{|z|, |z'|\} \geq0$ almost surely. 
Furthermore, the expected value 
\[
\mathbb{E}\left[\exp(a |\RE \log{L}(s,\Theta)|) \exp(a |\IM \log{L}(s,\Theta)|)\right]
\]
is finite by Lemma \ref{lemMGF} and the Cauchy--Schwarz inequality. 
Therefore, the dominated convergence theorem yields that $\widetilde{\mathcal{M}}_s(z,z',\Theta)$ exists, and we have 
\[
\widetilde{\mathcal{M}}_s(z,z',\Theta)
=\lim_{Y \to\infty} \mathbb{E}\left[\psi_{z,z'}(\log{L}_Y(s,\Theta))\right]. 
\]
Since $\Theta=(\Theta_p)$ is independent, we derive
\begin{align*}
\lim_{Y \to\infty} \mathbb{E}\left[\psi_{z,z'}(\log{L}_Y(s,\Theta))\right] 
&=\lim_{Y \to\infty} 
\prod_{p \leq Y} \mathbb{E}\left[ \psi_{z,z'}\left(-\log(1-2(\cos\Theta_p)p^{-s}+p^{-2s})\right) \right] \\
&=\prod_p\widetilde{\mathcal{M}}_{s,p}(z,z',\Theta).  
\end{align*}
Hence the result follows. 
\end{proof}

\begin{proof}[Proof of Proposition \ref{propUB}]
For any $z,w \in \mathbb{C}$, we express $\psi_{z,\overline{z}}(w)$ as
\[
\psi_{z,\overline{z}}(w)
=\exp( i \langle z,w \rangle), 
\]
where $\langle z,w \rangle=\RE{z}\RE{w}+\IM{z}\IM{w} \in \mathbb{R}$.
Then the inequality $|\widetilde{\mathcal{M}}_{s,p}(z,\overline{z},\Theta)| \leq1$ holds for every $p$ since $|\psi_{z,\overline{z}}(w)|=1$. 
Hence formula \eqref{eq252200} with $z'=\overline{z}$ implies
\begin{gather}\label{eq252201}
\left| \widetilde{\mathcal{M}}_s(z,\overline{z},\Theta) \right|
\leq \prod_{p>Q} \left| \widetilde{\mathcal{M}}_{s,p}(z,\overline{z},\Theta) \right|, 
\end{gather}
where $Q>0$ is a real number chosen later. 
Let $s=\sigma+it$ be a fixed complex number with $\sigma>1/2$ and $t \neq0$. 
We estimate $\widetilde{\mathcal{M}}_{s,p}(z,\overline{z},\Theta)$ for $p>Q$ as follows. 
First, we recall the asymptotic formula
\[
\psi_{z,\overline{z}}(w)
=1+ i \langle z,w \rangle- \frac{1}{2} \langle z,w \rangle^2+ O(|z|^3|w|^3)
\]
which is valid for $|z||w|<c$ with any positive constant $c$. 
We have also
\[
-\log( 1-2(\cos \Theta_p)p^{-s}+p^{-2s} )
=2(\cos \Theta_p)p^{-s}+ (\cos 2\Theta_p)p^{-2s}+ O(p^{-3\sigma}). 
\]
Combining them, we deduce
\begin{align}\label{eq261706}
\widetilde{\mathcal{M}}_{s,p}(z,\overline{z},\Theta)
&=1
+2i \mathbb{E}[\cos \Theta_p] \langle z, p^{-s} \rangle
+i \mathbb{E}[\cos 2\Theta_p] \langle z, p^{-2s} \rangle \\
&\qquad
-2\mathbb{E}[(\cos \Theta_p)^2] \langle z, p^{-s} \rangle^2
+O\left( |z|^3p^{-3\sigma} \right) \nonumber
\end{align}
if $|z|p^{-\sigma}<c$ is satisfied. 
Hence there exists a small constant $c_1>0$ such that, for any $0<c<c_1$, we have the inequality $|\widetilde{\mathcal{M}}_{s,p}(z,\overline{z},\Theta)-1|<1/2$ if $p$ satisfies
\begin{gather}\label{eq262659}
p
>Q(c, z,\sigma)
:=\left( \frac{|z|}{c} \right)^{1/\sigma}. 
\end{gather}
Let $\Log$ denote the principal branch of logarithm. 
For $p>Q(c, z,\sigma)$, we obtain
\begin{align*}
\Log \widetilde{\mathcal{M}}_{s,p}(z,\overline{z},\Theta)
&= 2i \mathbb{E}[\cos \Theta_p] \langle z, p^{-s} \rangle
+i \mathbb{E}[\cos 2\Theta_p] \langle z, p^{-2s} \rangle \\
&\qquad
-2\mathbb{E}[(\cos \Theta_p)^2] \langle z, p^{-s} \rangle^2
+O_\epsilon\left( |z|^2 p^{-2\sigma-1+2\epsilon}+ |z|^3 p^{-3\sigma} \right)
\end{align*}
by applying the formula $\Log(1+w)=w+O(|w|^2)$ along with \eqref{eq220319}. 
Note that we have $2\langle z, p^{-s} \rangle^2= |z|^2 p^{-2\sigma} +\langle z^2, p^{-2s} \rangle$. 
Then, taking the real parts, we deduce
\begin{align*}
\log \left|\widetilde{\mathcal{M}}_{s,p}(z,\overline{z},\Theta)\right|
&= -\mathbb{E}[(\cos \Theta_p)^2] |z|^2 p^{-2\sigma}
-\mathbb{E}[(\cos \Theta_p)^2] \langle z^2, p^{-2s} \rangle\\
&\qquad
+O\left( |z|^2 p^{-2\sigma-\frac{1}{2}}+ |z|^3 p^{-3\sigma} \right), 
\end{align*}
where $\epsilon$ is taken with $0<\epsilon<1/4$. 
By \eqref{eq220320}, the inequality 
\begin{gather}\label{eq261645}
\log \left|\widetilde{\mathcal{M}}_{s,p}(z,\overline{z};\Theta)\right|
\leq -\delta |z|^2 p^{-2\sigma}
-\delta \langle z^2, p^{-2s} \rangle
+K_1 (|z|^2 p^{-2\sigma-\frac{1}{2}}+ |z|^3 p^{-3\sigma})
\end{gather}
holds with positive absolute constants $\delta$ and $K_1$. 
We have
\begin{align*}
\sum_{p>Q} |z|^2 p^{-2\sigma}
&=|z|^2 \frac{1}{2\sigma-1} \frac{Q^{1-2\sigma}}{\log{Q}}
+O_\sigma\left( |z|^2 \frac{Q^{1-2\sigma}}{(\log{Q})^2} \right),\\
\sum_{p>Q} \langle z^2, p^{-2s} \rangle
&=\left\langle z^2, \frac{1}{2s-1} \frac{Q^{1-2s}}{\log{Q}} \right\rangle
+O_s\left( |z|^2 \frac{Q^{1-2\sigma}}{(\log{Q})^2} \right)
\end{align*}
for any $Q>3$ by the prime number theorem.
Since the inequality 
\[
\left| \left\langle z^2, \frac{1}{2s-1} \frac{Q^{1-2s}}{\log{Q}} \right\rangle \right|
\leq |z|^2 \frac{1}{|2s-1|} \frac{Q^{1-2\sigma}}{\log{Q}}
\]
holds, we obtain 
\[
-\sum_{p>Q} |z|^2 p^{-2\sigma}
-\sum_{p>Q} \langle z^2, p^{-2s} \rangle
\leq -d(s) |z|^2 \frac{1}{2\sigma-1} \frac{Q^{1-2\sigma}}{\log{Q}}
+K_2(s) |z|^2 \frac{Q^{1-2\sigma}}{(\log{Q})^2}, 
\]
where $d(s)$ is a constant given by $d(s)=1- (2\sigma-1)|2s-1|^{-1}$, and $K_2(s)$ is a positive constant.
By the assumption that $s=\sigma+it$ with $t \neq0$, we know $d(s)>0$. 
As a result, we deduce from \eqref{eq261645} that 
\begin{align*}
\sum_{p>Q} \log \left|\widetilde{\mathcal{M}}_{s,p}(z,\overline{z},\Theta)\right|
&\leq -\frac{d(s) \delta}{2\sigma-1} \frac{|z|^2 Q^{1-2\sigma}}{\log{Q}}
+\frac{K_2(s)}{\log{Q}} \frac{|z|^2 Q^{1-2\sigma}}{\log{Q}}\\
&\qquad
+\frac{K_3(s)}{Q^{1/2}} \frac{|z|^2 Q^{1-2\sigma}}{\log{Q}}
+c K_3(s) \frac{|z|^2 Q^{1-2\sigma}}{\log{Q}}
\end{align*}
for $Q=Q(c,z,\sigma)$ given in \eqref{eq262659}, where $K_3(s)$ is a positive constant.
If we choose $c>0$ as a suitably small constant depending on $s$, then it holds 
\[
\sum_{p>Q} \log \left|\widetilde{\mathcal{M}}_{s,p}(z,\overline{z},\Theta)\right|
\leq -c(s) \frac{|z|^{1/\sigma}}{\log|z|}
\]
with some constant $c(s)>0$. 
From \eqref{eq252201} we obtain the conclusion in the case $t \neq0$. 
The proof for $s=\sigma>1/2$ is presented in a similar way. 
The difference is coming from the fact that $\widetilde{\mathcal{M}}_{\sigma,p}(z,\overline{z};\Theta)$ is represented as 
\[
\widetilde{\mathcal{M}}_{\sigma,p}(z,\overline{z},\Theta)
=\mathbb{E}\left[ \exp\left(-x \log(1- 2(\cos\Theta_p)p^{-\sigma}+ p^{-2\sigma})\right) \right]
\]
for $z=x+iy$. 
Hence, in place of \eqref{eq261706}, the asymptotic formula
\begin{align*}
\widetilde{\mathcal{M}}_{\sigma,p}(z,\overline{z},\Theta)
&=1
+2i \mathbb{E}[\cos \Theta_p] x p^{-\sigma}
+i \mathbb{E}[\cos 2\Theta_p] x p^{-2\sigma} \\
&\qquad
-2 \mathbb{E}[(\cos \Theta_p)^2] x^2 p^{-2\sigma}
+O\left(|x|^3 p^{-3\sigma}\right)
\end{align*} 
is available when $|x| p^{-\sigma}<c$ is satisfied. 
Then it yields
\[
\log \left|\widetilde{\mathcal{M}}_{\sigma,p}(z,\overline{z},\Theta)\right|
\leq -2 \delta x^2 p^{-2\sigma}
+K( x^2 p^{-2\sigma-\frac{1}{2}}+ |x|^3 p^{-3\sigma}) 
\]
with some $K>0$. 
Therefore we obtain the result by estimating $\sum_{p>Q} x^2 p^{-2\sigma}$. 
\end{proof}

\subsection{Strategy for the proof of Proposition \ref{propCM1}}\label{sec2.2}
Let $q$ be a large prime number. 
For $Y<q$, we define the Dirichlet polynomials
\[
R_Y(s,f)
=\mathop{ \sum_{p} \sum_{m=1}^\infty} \limits_{p^m \leq Y}
\frac{b(p^m,f)}{p^{ms}}
\quad\text{and}\quad
R_Y(s,\Theta)
=\mathop{ \sum_{p} \sum_{m=1}^\infty} \limits_{p^m \leq Y}
\frac{b(p^m,\Theta)}{p^{ms}} 
\]
for $\RE(s)>1/2$, where the coefficient $b(p^m,f)$ is given by \eqref{eq052248}, and we put
\[
b(p^m,\Theta)
=\frac{2\cos(m\Theta_p)}{m}. 
\]
As analogues of the functions $\widetilde{\mathcal{M}}_{s,q}(z,z',\omega_q)^\mathcal{E}$ and $\widetilde{\mathcal{M}}_s(z,z',\Theta)$, we further define
\begin{align*}
\widetilde{\mathcal{M}}_{s,q}(z,z',\omega_q;Y)^\mathcal{E}
&=\sum_{f \in B_2(q) \setminus \mathcal{E}} \omega_q(f) \psi_{z,z'}(R_Y(s,f)), \\
\widetilde{\mathcal{M}}_s(z,z',\Theta;Y)
&=\mathbb{E}\left[ \psi_{z,z'}(R_Y(s,\Theta)) \right]. 
\end{align*}
The first step to the proof of Proposition \ref{propCM1} is to approximate $\log{L}(s,f)$ and $\log{L}(s,\Theta)$ by $R_Y(s,f)$ and $R_Y(s,\Theta)$, respectively. 
Then, we prove the following proposition which connects $\widetilde{\mathcal{M}}_{s,q}(z,z',\omega_q;Y)^\mathcal{E}$ and $\widetilde{\mathcal{M}}_s(z,z',\Theta;Y)$. 

\begin{proposition}\label{propCM2}
Under the assumptions of Proposition \ref{propCM1}, we put $Y=(\log{q})^B$ and define a set $\mathcal{E}_1=\mathcal{E}_1(q,s;Y)$ as $\mathcal{E}_1(q,s;Y)=\emptyset$ if $\sigma \geq1$; and
\begin{gather}\label{eq271607}
\mathcal{E}_1(q,s;Y)
=\left\{ f \in B_2(q) ~\middle|~ |R_Y(s,f)|>(\log{q})^{1-\sigma}(\log\log{q})^{-1} \right\}
\end{gather}
if $1/2<\sigma<1$. 
Then there exist positive constants $a_2=a_2(\sigma,B)$ and $b_2=b_2(\sigma,B)$ such that 
\[
\widetilde{\mathcal{M}}_{s,q}(z,z',\omega_q;Y)^{\mathcal{E}_1}
=\widetilde{\mathcal{M}}_s(z,z',\Theta;Y)
+O\left( \exp\left( -b_2 \frac{\log{q}}{\log\log{q}} \right) \right)
\]
for all $z, z'\in\mathbb{C}$ with $\max\{|z|,|z'|\} \leq a_2 R_\sigma(q)$, where $R_\sigma(q)$ is defined as in \eqref{eq301946}, and the implied constant depends only on $s$ and $B$. 
\end{proposition}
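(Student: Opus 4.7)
The plan is to compare $\widetilde{\mathcal{M}}_{s,q}(z,z',\omega_q;Y)^{\mathcal{E}_1}$ with $\widetilde{\mathcal{M}}_s(z,z',\Theta;Y)$ via a Taylor expansion of $\psi_{z,z'}$ around the origin, matched coefficient by coefficient against relation \eqref{eq220321}. Writing
\[
\psi_{z,z'}(w) = \sum_{a+b \leq K} \frac{(iz/2)^a(iz'/2)^b}{a!\,b!}\,\overline{w}^{\,a} w^b + E_K(z,z';w),
\]
I choose $K = c_0 \log q/\log\log q$ with $c_0$ a small positive constant depending on $\sigma$, $B$, and on the constants $\alpha,\beta$ from \eqref{eq220321}. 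Substituting $w = R_Y(s,f)$ and $w = R_Y(s,\Theta)$ splits each of the two sides into a polynomial part $P_K$ and a Taylor-tail part $E_K$, to be handled separately.

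For the polynomial part, each monomial $\overline{R_Y(s,f)}^{\,a}R_Y(s,f)^b$ is a Dirichlet polynomial supported on integers $n \leq Y^{a+b} \leq Y^K$. After reorganising via the Hecke multiplication $a(p^j,f)a(p^k,f) = \sum_{\ell}a(p^{j+k-2\ell},f)$, its coefficients are linear combinations of Hecke eigenvalues $a(n,f)$ with controlled (divisor-type) multiplicities. Term-by-term application of \eqref{eq220321} then yields
\[
\sum_{f \in B_2(q)}\omega_q(f)P_K(R_Y(s,f)) = \mathbb{E}[P_K(R_Y(s,\Theta))] + O\bigl(Y^{\alpha K}q^{-\beta}\bigr)
\]
after summing over all $a + b \leq K$. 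With $Y = (\log q)^B$ and $c_0$ small enough, the error is bounded by $\exp(-b_2\log q/\log\log q)$, as required.

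The Taylor tails are controlled next. On the random side, Lemma \ref{lemMGF} and the moment bounds developed in its proof give $\mathbb{E}[|E_K(R_Y(s,\Theta))|] \ll \exp(-b_2\log q/\log\log q)$. On the deterministic side, for $f \notin \mathcal{E}_1$ one checks that $(|z|+|z'|)|R_Y(s,f)| \ll \log q/\log\log q$ in all three regimes of $\sigma$ --- using the unconditional bound $|b(p^m,f)|\leq 2/m$ when $\sigma\geq 1$, and exclusion \eqref{eq271607} when $1/2<\sigma<1$ --- so Stirling gives the same pointwise bound on $|E_K(R_Y(s,f))|$. Finally, when $1/2<\sigma<1$ the polynomial step must be corrected by subtracting $\sum_{f\in\mathcal{E}_1}\omega_q(f)P_K(R_Y(s,f))$; this is handled by Chebyshev's inequality together with a high-even-moment estimate $\sum_f\omega_q(f)|R_Y(s,f)|^{2k}$, itself computed from \eqref{eq220321}, which gives $\sum_{f\in\mathcal{E}_1}\omega_q(f) \ll \exp(-b_2\log q/\log\log q)$.

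The main obstacle is the tight calibration of $K$: it must be large enough that the Taylor tails on both sides decay like $\exp(-b_2\log q/\log\log q)$ uniformly in $|z|,|z'|\leq a_2R_\sigma(q)$, yet small enough that the cumulative error $Y^{\alpha K}q^{-\beta}$ produced by applying \eqref{eq220321} once per monomial in $P_K$ stays inside the same bound. The three definitions in \eqref{eq301946} and the threshold in \eqref{eq271607} are precisely what ensure the uniform budget $(|z|+|z'|)|R_Y(s,f)|\ll \log q/\log\log q$ across all regimes; controlling the exceptional set $\mathcal{E}_1$ in the sub-unit strip, presumably via a large-sieve inequality of the kind appearing as Proposition \ref{lemIM}, is the other delicate point.
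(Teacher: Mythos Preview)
Your strategy matches the paper's: truncate the Taylor expansion of $\psi_{z,z'}$ at order $N\asymp \log q/\log\log q$, match the polynomial part term-by-term via \eqref{eq220321} (the paper packages this as Proposition \ref{propIM}), and bound the tails on both sides. The paper's organisation differs only cosmetically --- it factors $\psi_{z,z'}(R_Y) = g_z(\overline{s};Y)\,g_{z'}(s;Y)$, truncates each exponential factor separately, and then uses Cauchy--Schwarz on second moments (Lemmas \ref{lemG-} and \ref{lemG+}) to handle the resulting cross terms $g^\flat g^\#$; your direct bivariate truncation is equivalent and arguably cleaner.

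One point to correct: the random-side tail bound $\mathbb{E}[|E_K(R_Y(s,\Theta))|]\ll\exp(-b_2\log q/\log\log q)$ does \emph{not} follow from Lemma \ref{lemMGF}, which only establishes qualitative finiteness of the MGF of $\log L(s,\Theta)$ via \eqref{eq220319} and gives no growth rate in the exponent. In the strip $1/2<\sigma<1$, where $R_Y(s,\Theta)$ is unbounded, you genuinely need the quantitative moment bound $\mathbb{E}[|R_Y(s,\Theta)|^{2k}]\ll(K_1 k^{1-\sigma}/(\log 2k)^\sigma)^{2k}$ of Lemma \ref{lemIM}, whose proof hinges on the vanishing of odd moments coming from condition \eqref{eq220323}. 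This is exactly the same input you correctly invoke at the end for controlling $\mathcal{E}_1$ (via Proposition \ref{propIM} plus Chebyshev, which is precisely how the paper obtains Corollary \ref{corIM}); you simply need to apply it to the random tail as well.
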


A similar result was proved in \cite[Proposition 6.1]{Lamzouri2011b}, but we use a different finite truncation in this paper. 
Proposition \ref{propCM2} is also an analogue of \cite[Proposition 2.3]{LamzouriLesterRadziwill2019} for automorphic $L$-functions. 
The proof of Proposition \ref{propCM2} is given in Section \ref{sec3.3}. 
In the remaining part of this subsection, we prove some preliminary lemmas. 

\begin{lemma}\label{lemLS}
Let $s=\sigma+it$ be a fixed complex number with $\sigma>1/2$. 
Let $1\ll y \leq z$ be large real numbers. 
Then we have 
\[
\sum_{f \in B_2(q)} \frac{1}{\langle f,f \rangle}
\Bigg| \sum_{y \leq p \leq z} \frac{a(p,f)}{p^s} \Bigg|^{2k}
\ll 2^{2k} \frac{(2k)!}{k!} \Bigg( \sum_{y \leq p \leq z} \frac{1}{p^{2\sigma}} \Bigg)^k
+\frac{\log{q}}{\sqrt{q}}
\]
for $k \in \mathbb{Z}$ with $1 \leq k \leq \log{q}/(2\log{z})$. 
The implied constant is absolute. 
\end{lemma}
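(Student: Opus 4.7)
The plan is to apply the Petersson trace formula after expanding $|R(s,f)|^{2k}$ as a multiple Dirichlet series. Since $a(p,f)=2\cos\theta_p(f)\in\mathbb{R}$ for $p\ne q$, one has
\[
\Bigl|\sum_{y\le p\le z}\frac{a(p,f)}{p^s}\Bigr|^{2k}
=\sum_{p_1,\ldots,p_{2k}}\frac{a(p_1,f)\cdots a(p_{2k},f)}{p_1^{s_1}\cdots p_{2k}^{s_{2k}}},
\]
where $s_j=s$ for $j\le k$ and $s_j=\bar s$ for $j>k$, so that $\RE(s_j)=\sigma$ throughout. Provided $z<q$, the primes are coprime to $q$ and the Hecke relation $a(m,f)a(n,f)=\sum_{d\mid(m,n)}a(mn/d^2,f)$ rewrites each product $a(p_1,f)\cdots a(p_{2k},f)$ as a bounded linear combination of single Hecke eigenvalues $a(n,f)$ with $n\mid p_1\cdots p_{2k}$.

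Next I would feed this expansion into the Petersson trace formula
\[
\sum_{f\in B_2(q)}\frac{1}{4\pi\langle f,f\rangle}a(m,f)a(n,f)
=\delta_{m=n}+2\pi\sum_{q\mid c}\frac{S(m,n;c)}{c}J_1\!\Bigl(\frac{4\pi\sqrt{mn}}{c}\Bigr)
\]
and split into diagonal and off-diagonal contributions. The diagonal $\delta_{m=n}$ isolates those $2k$-tuples whose index collapses to $n=1$ under the Hecke reductions---that is, tuples in which the primes can be matched into $k$ pairs, each pair contributing the constant $1$ coming from $a(p,f)^2=a(p^2,f)+1$. Counting these pairings and bounding the residual Hecke factors trivially via Deligne's bound $|a(p,f)|\le 2$ produces the main-term estimate $2^{2k}(2k)!/k!\cdot\bigl(\sum_{y\le p\le z}p^{-2\sigma}\bigr)^k$, matching the first term of the inequality. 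The gap between this and a sharper Gaussian-type moment reflects the fact that we use Deligne rather than any Sato--Tate-style cancellation, which is enough for the application.

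For the off-diagonal contribution I would use Weil's bound $|S(m,n;c)|\le\tau(c)(m,n,c)^{1/2}c^{1/2}$ together with $J_1(x)\ll\min(x,x^{-1/2})$. The hypothesis $k\le(\log q)/(2\log z)$ is designed precisely so that $p_1\cdots p_{2k}\le z^{2k}\le q$, which keeps the indices $m,n$ below $q$; at the smallest modulus $c=q$ this makes the Bessel argument bounded and the series in $c$ converge rapidly. A careful accounting of the $(mn)^{-1/2}$-weighted sum over the admissible pairs then yields a total off-diagonal error of $O(q^{-1/2}\log q)$, matching the second term. The main obstacle is exactly this final accounting: organizing the Hecke reductions so that the diagonal is cleanly counted by $2^{2k}(2k)!/k!$, and then combining Weil's bound with the weighted count of off-diagonal $(m,n)$ pairs so that the total stays below $\sqrt q$. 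The truncation $k\le(\log q)/(2\log z)$ is the key parameter that makes both estimates go through simultaneously.
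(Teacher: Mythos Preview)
Your outline is essentially the argument underlying Lamzouri's Lemma~6.5, which is precisely what the paper invokes: the paper's own proof is a two-line citation, observing that the real-$s$ case is Lamzouri \cite[Lemma 6.5]{Lamzouri2011b} and that the extension to complex $s$ is immediate because $a(p,f)\in\mathbb{R}$ (so the conjugate in $|\cdot|^{2k}$ only replaces $s$ by $\overline{s}$ in half the factors, leaving $\RE(s_j)=\sigma$ everywhere). So there is no methodological difference: you are sketching what the paper cites rather than citing it.

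One small imprecision worth tightening if you carry this out in full: the diagonal contribution is not literally ``tuples matched into $k$ pairs'' but rather the $n=1$ coefficient in the Hecke decomposition $a(p_1,f)\cdots a(p_{2k},f)=\sum_{n\mid p_1\cdots p_{2k}} b_n\, a(n,f)$. Organizing this by the multiset of distinct primes and their multiplicities (as the paper does later in the proof of Lemma~\ref{lemIM}) shows that only even multiplicities survive, and then the multinomial bound $\binom{2k}{2m_1,\ldots,2m_n}\le \tfrac{(2k)!}{k!}\binom{k}{m_1,\ldots,m_n}$ together with $|a(p,f)|\le 2$ gives exactly $2^{2k}(2k)!/k!\bigl(\sum p^{-2\sigma}\bigr)^k$. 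Your off-diagonal treatment via Weil and the constraint $z^{2k}\le q$ is the correct mechanism for the $q^{-1/2}\log q$ error.
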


\begin{proof}
Lamzouri \cite[Lemma 6.5]{Lamzouri2011b} proved the same result when $s$ is a fixed real number. 
We can show Lemma \ref{lemLS} in a similar way by noting that $a(p,f)$ is always real. 
\end{proof}

\begin{lemma}\label{lemES1}
Let $s=\sigma+it$ be a fixed complex number with $1/2<\sigma<1$. 
For $Y=(\log{q})^B$ with $B\geq1$, we define the subset $\mathcal{E}_1(q,s;Y)$ as in \eqref{eq271607}. 
Then there exists a positive constant $b_3=b_3(\sigma,B)$ such that 
\[
\frac{\# \mathcal{E}_1(q,s;Y)}{q}
\ll \exp\left( -b_3 \frac{\log{q}}{\log\log{q}} \right), 
\]
where the implied constant depends only on $s$ and $B$.
\end{lemma}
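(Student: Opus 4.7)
The plan is to apply the $2k$-th moment method: bound the size of the exceptional set by Chebyshev's inequality, using Lemma \ref{lemLS} to estimate high moments of $R_Y(s,f)$.

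I begin by isolating the main term. Writing
\[
R_Y(s,f)=\sum_{p\leq Y}\frac{a(p,f)}{p^{s}}+\sum_{\substack{m\geq 2\\ p^{m}\leq Y}}\frac{b(p^{m},f)}{p^{ms}},
\]
the second sum is bounded by $\sum_{m\geq 2}(2/m)\sum_{p}p^{-m\sigma}\ll_{\sigma}1$ uniformly in $f$, since $\sigma>1/2$. Hence, for $q$ sufficiently large, $f\in\mathcal{E}_1(q,s;Y)$ forces $\bigl|\sum_{p\leq Y}a(p,f)/p^{s}\bigr|\gg V_q$, where $V_q:=(\log q)^{1-\sigma}/\log\log q$.

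Next, I apply Lemma \ref{lemLS} with $y$ fixed and $z=Y=(\log q)^{B}$. Since $\sigma>1/2$, the constant $C(\sigma):=\sum_{p}p^{-2\sigma}$ is finite. Using Stirling's bound $2^{2k}(2k)!/k!\leq (c_0k)^{k}$ for an absolute $c_0>0$ and the upper bound $\langle f,f\rangle\ll q(\log q)^{3}$ from \cite{CogdellMichel2004} to convert the harmonic mean into a plain sum, Chebyshev's inequality then yields
\[
\frac{\#\mathcal{E}_1(q,s;Y)}{q}\ll (\log q)^{3}V_q^{-2k}\Bigl((c_0C(\sigma)k)^{k}+\frac{\log q}{\sqrt q}\Bigr)
\]
for every integer $k$ with $1\leq k\leq \log q/(2B\log\log q)$.

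The final step is to optimise $k$ within the admissible range so that the right-hand side attains the claimed form $\exp(-b_3\log q/\log\log q)$ with $b_3=b_3(\sigma,B)>0$. The first bracketed term can be rewritten as $(c_0C(\sigma)k/V_q^{2})^{k}$, and the secondary term $(\log q)^{4}V_q^{-2k}/\sqrt q$ is negligible as soon as $V_q\geq 2$. The main obstacle is this optimisation: the factor $(c_0C(\sigma)k/V_q^{2})^{k}$ is naively minimised at $k\asymp V_q^{2}$, giving only a bound of size $\exp(-cV_q^{2})$; to reach the stated exponent $\log q/\log\log q$, one exploits the freedom to enlarge $B$ (and thereby expand the admissible range of $k$) and combines it with the slow growth of $V_q$. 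A careful bookkeeping of constants—absorbing the polynomial prefactor $(\log q)^{3}$ into the exponential and choosing $b_3$ small enough depending on $\sigma$ and $B$—delivers the lemma.
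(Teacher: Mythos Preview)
Your approach has a genuine gap that cannot be patched by the optimisation you sketch at the end. Applying Lemma~\ref{lemLS} with $y$ fixed gives the main term $(c_0C(\sigma)k)^{k}$ with $C(\sigma)=\sum_p p^{-2\sigma}$ a fixed constant, so Chebyshev yields $(c_0C(\sigma)k/V_q^{2})^{k}$. The minimum of $x\mapsto(c k/V_q^2)^{k}$ over $k$ is attained at $k\asymp V_q^{2}$ and equals $\exp(-c'V_q^{2})$. Since $V_q^{2}=(\log q)^{2-2\sigma}/(\log\log q)^{2}$ and $2-2\sigma<1$ for $\sigma>1/2$, this is exponentially weaker than the target $\exp(-b_3\log q/\log\log q)$. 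Your proposed fix of ``enlarging $B$'' is not available ($B$ is fixed in the hypothesis), and in any case would \emph{shrink} the admissible range $k\leq\log q/(2B\log\log q)$; moreover the optimal $k\asymp V_q^{2}$ already lies comfortably inside the range, so widening it would not help.

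The missing idea is to split the prime sum at a $k$-dependent height, say $p=k\log k$, before applying Lemma~\ref{lemLS}. The short sum $\sum_{p<k\log k}a(p,f)p^{-s}$ is handled by the trivial bound $|a(p,f)|\leq 2$ and the prime number theorem, contributing $\ll(ck^{1-\sigma}/(\log k)^{\sigma})^{2k}$. For the tail one applies Lemma~\ref{lemLS} with $y=k\log k$, so that $\sum_{y\leq p\leq Y}p^{-2\sigma}\ll (k\log k)^{1-2\sigma}/\log k$ rather than a constant; combined with $2^{2k}(2k)!/k!\ll(ck)^{k}$ this again produces $(ck^{1-\sigma}/(\log k)^{\sigma})^{2k}$. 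Now Chebyshev gives
\[
\left(K(\sigma)\,\frac{k^{1-\sigma}\log\log q}{(\log k)^{\sigma}(\log q)^{1-\sigma}}\right)^{2k},
\]
and the choice $k=\lfloor\log q/(L\log\log q)\rfloor$ with $L=L(\sigma,B)$ large makes the bracket a constant $<1$, delivering the stated exponent. The splitting is essential: it is what converts the constant $C(\sigma)$ into the decaying factor $k^{1-2\sigma}$ that exactly cancels the growth from Stirling.
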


\begin{proof}
For a large integer $k \leq \log{q}/(2B\log\log{q})$, we obtain 
\begin{gather}\label{eq310121}
R_Y(s,f)
=\sum_{p< k \log{k}} \frac{a(p,f)}{p^s}
+\sum_{k \log{k} \leq p \leq Y} \frac{a(p,f)}{p^s}
+O(\log\zeta(2\sigma)). 
\end{gather}
First, we estimate the $2k$-th moment
\[
S_1
=\sum_{f \in B_2(q)} \frac{1}{\langle f,f \rangle}
\Bigg| \sum_{p< k \log{k}} \frac{a(p,f)}{p^s} \Bigg|^{2k}. 
\]
The Petersson trace formula yields that $\sum_{f \in B_2(q)} (4 \pi \langle f,f \rangle)^{-1}$ is bounded, and we have $|a(p,f)|\leq2$ for all $p$. 
Hence we evaluate $S_1$ as
\begin{gather}\label{eq310122}
S_1
\ll \Bigg( \sum_{p< k \log{k}} \frac{2}{p^\sigma} \Bigg)^{2k} 
\ll \left( \frac{4 k^{1-\sigma}}{(1-\sigma)(\log{k})^\sigma} \right)^{2k}
\end{gather}
by applying the prime number theorem. 
Next, we consider
\[
S_2
=\sum_{f \in B_2(q)} \frac{1}{\langle f,f \rangle}
\left| \sum_{k \log{k} \leq p \leq Y} \frac{a(p,f)}{p^s} \right|^{2k}.  
\]
Using Lemma \ref{lemLS}, it is estimated as
\begin{gather}\label{eq310123}
S_2
\ll 2^{2k} \frac{(2k)!}{k!} \Bigg( \sum_{k \log k \leq p \leq Y} \frac{1}{p^{2\sigma}} \Bigg)^k
+\frac{\log{q}}{\sqrt{q}}
\ll_\sigma \left( \frac{4k^{1-\sigma}}{\sqrt{2\sigma-1} (\log{k})^\sigma} \right)^{2k}. 
\end{gather}
Then, asymptotic formula \eqref{eq310121} yields the inequality
\[
\sum_{f \in B_2(q)} \frac{1}{\langle f,f \rangle}
|R_Y(s,f)|^{2k}
\leq 9^k S_1+ 9^k S_2+ 9^k (C \log{\zeta}(2\sigma))^{2k}, 
\]
where $C>0$ is an absolute constant. 
Thus it is deduced from \eqref{eq310122} and \eqref{eq310123} that 
\[
\sum_{f \in B_2(q)} \frac{1}{\langle f,f \rangle}
|R_Y(s,f)|^{2k}
\ll \left( K(\sigma) \frac{k^{1-\sigma}}{(\log{k})^\sigma} \right)^{2k}
\]
with some constant $K(\sigma)>0$. 
Hence we obtain
\begin{align*}
\sum_{f \in \mathcal{E}_1(q,s;Y)} \frac{1}{\langle f,f \rangle}
&\leq \left( \frac{(\log{q})^{1-\sigma}}{\log\log{q}} \right)^{-2k}
\sum_{f \in B_2(q)} \frac{1}{\langle f,f \rangle} |R_Y(s,f)|^{2k}\\
&\ll \left( K(\sigma) \frac{k^{1-\sigma}\log\log{q}}{(\log{k})^\sigma (\log{q})^{1-\sigma}} \right)^{2k}. 
\end{align*}
Let $k=\lfloor \log{q}/(L\log\log{q}) \rfloor$ with a constant $L=L(\sigma,B)>0$ large enough. 
Then there exists a positive constant $b(\sigma,B)$ such that
\[
\sum_{f \in \mathcal{E}_1(q,s;Y)} \frac{1}{\langle f,f \rangle}
\ll \exp\left( -b(\sigma,B) \frac{\log{q}}{\log\log{q}} \right)  
\]
with the implied constant depending only on $s$ and $B$. 
Since we have the lower bound $\langle f,f \rangle^{-1} \gg q^{-1}(\log{q})^{-3}$, the desired result follows. 
\end{proof}

\begin{lemma}\label{lemES2}
Let $s=\sigma+it$ be a fixed complex number with $\sigma>1/2$. 
For $Y=(\log{q})^B$ with $B \geq1$, we define 
\begin{gather}\label{eq282331}
\mathcal{E}_2(q,s;Y)
=\left\{ f \in B_2(q) ~\middle|~ |\log{L}(s,f)-R_Y(s,f)| > Y^{-\frac{1}{2}(\sigma-\frac{1}{2})} (\log{q})^2 \right\}. 
\end{gather}
Then there exists a positive constant $b_4=b_4(\sigma,B)$ such that
\[
\frac{\# \mathcal{E}_2(q,s;Y)}{q}
\ll \exp\left( -b_4 \frac{\log{q}}{\log\log{q}} \right)
\]
with the implied constant depending only on $s$ and $B$. 
\end{lemma}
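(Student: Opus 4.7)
The plan is to control $\log L(s,f) - R_Y(s,f)$ via a weighted $2k$-th moment, then convert this into a bound on the size of $\mathcal{E}_2(q,s;Y)$ by Markov's inequality, in the spirit of the proof of Lemma \ref{lemES1}. The extra ingredient relative to Lemma \ref{lemES1} is that $\log L(s,f)$ must first be approximated analytically by a longer Dirichlet polynomial before moment methods can be applied.

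First, for $\sigma > 1$ the approximation is essentially trivial: $\log L(s,f) - R_Y(s,f) = \sum_{p^m > Y} b(p^m,f)/p^{ms}$ converges absolutely, and a direct application of Lemma \ref{lemLS} (splitting the $m=1$ and $m\geq 2$ contributions, with the latter being negligible via the trivial bound $|b(p^m,f)|\leq 2/m$) gives the moment estimate. The interesting case is $1/2 < \sigma \leq 1$. For a smooth cutoff $\Psi$ with $\Psi(x)=1$ on $[0,1]$ and compact support, and an auxiliary parameter $X = Y^A$ with $A$ a suitable constant, I would use a contour shift of the Mellin formula
\[
\sum_{n=1}^{\infty} \frac{b(n,f)}{n^s} \Psi\!\Bigl(\frac{n}{X}\Bigr) = \frac{1}{2\pi i}\int_{(c)} \log L(s+w,f)\,\tilde\Psi(w)\, X^{w}\,dw,
\]
valid for $c$ large, moving the line to $\Re(w) = -\eta$ for a small $\eta>0$. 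The residue at $w=0$ produces $\log L(s,f)$, so one obtains
\[
\log L(s,f) = \sum_{n} \frac{b(n,f)}{n^s}\Psi(n/X) + \frac{1}{2\pi i}\int_{(-\eta)} \log L(s+w,f)\,\tilde\Psi(w)\,X^w\,dw.
\]

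The shifted integral is controlled by bounding $\log L(s+w,f)$ on the line $\Re(w) = -\eta$, which in turn requires that $L$ have no zeros inside a suitable rectangle to the left of $s$. I would remove an exceptional set $\mathcal{E}^*$ of those $f$ for which $L$ has such a zero, estimating $\#\mathcal{E}^*$ via a zero-density estimate for $L(s,f)$ averaged over $f \in B_2(q)$ (an automorphic analogue of Selberg--Montgomery, present in the literature of Kowalski--Michel and used in analogous arguments of Lamzouri); this gives $\#\mathcal{E}^*/q \ll \exp(-c\log q/\log\log q)$. For $f \notin \mathcal{E}^*$ the Hadamard factorization bounds the contour integral by $X^{-\eta}(\log q)^{O(1)}$, safely below the threshold $Y^{-(\sigma-1/2)/2}(\log q)^2$ once $A$ is chosen appropriately.

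It remains to control the difference $\sum_n (b(n,f)/n^s)\Psi(n/X) - R_Y(s,f)$, which is a Dirichlet polynomial supported on prime powers $p^m$ with $Y < p^m \lesssim X$. Splitting off the contribution of prime powers $p^m$ with $m\geq 2$ (easily estimated by the trivial bound $|b(p^m,f)|\leq 2/m$, uniformly in $f$), the remaining sum over primes falls exactly within the scope of Lemma \ref{lemLS}. Taking the $2k$-th moment with $k \asymp \log q/\log\log q$ and applying Markov's inequality, together with the lower bound $\langle f,f\rangle^{-1} \gg q^{-1}(\log q)^{-3}$, yields the desired bound on $\#\mathcal{E}_2(q,s;Y)$. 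The main obstacle is the zero-density step: one must locate (or derive) a zero-density estimate in the $q$-aspect strong enough to absorb the exceptional set inside the target bound $\exp(-b_4\log q/\log\log q)$, uniformly in the height $|t|$ of $s$, and this dictates the admissible range of $A$ and $\eta$ above.
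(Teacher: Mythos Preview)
Your approach is correct but considerably more elaborate than the paper's. The paper's proof is essentially two lines: it invokes \cite[Lemma~4.4]{CogdellMichel2004}, which already asserts that if $L(\cdot,f)$ has no zero in the rectangle $A_s(Y)=\{z:\sigma_0\le\RE(z)\le1,\ |\IM(z)-t|\le Y+3\}$ with $\sigma_0=\tfrac{1}{2}(\sigma+\tfrac{1}{2})$, then $\log L(s,f)=R_Y(s,f)+O_\sigma(Y^{-\frac{1}{2}(\sigma-\frac{1}{2})}\log q)$ deterministically---with the \emph{same} cutoff $Y$ and with error already below the threshold defining $\mathcal{E}_2$. Thus membership in $\mathcal{E}_2$ forces a zero in $A_s(Y)$, and the Kowalski--Michel zero-density estimate \cite[Theorem~4]{KowalskiMichel1999} bounds the number of such $f$ by $q^{1-\frac{1}{10}(\sigma-\frac{1}{2})}(\log q)^{K(B)}$, which is more than enough. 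No moment argument enters. Your Mellin/contour step essentially rederives the Cogdell--Michel approximation, but only up to a longer length $X=Y^A$; this then forces you to bridge the range $Y<p\lesssim X$ by a separate moment computation via Lemma~\ref{lemLS} and Markov. That route works, but both the intermediate scale $X$ and the moment step are avoidable once the sharper black-box approximation is quoted. A minor point: your claim that the case $\sigma>1$ is handled ``trivially via Lemma~\ref{lemLS}'' still needs a truncation, since that lemma applies only to finite prime sums; the paper's argument treats all $\sigma>1/2$ uniformly with no case distinction.
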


\begin{proof}
By the asymptotic formula \cite[Lemma 4.4]{CogdellMichel2004}, we have 
\[
\log{L}(s,f)
=R_Y(s,f)
+O_\sigma\left( Y^{-\frac{1}{2}(\sigma-\frac{1}{2})} (\log{q}) \right)
\]
for $2q \geq |t|$ if there exist no zeros of $L(s,f)$ inside the rectangle 
\[
A_s(Y)
=\left\{ z \in \mathbb{C} ~\middle|~ \sigma_0 \leq \RE(z) \leq1,~ |\IM(z)-t| \leq Y+3 \right\},
\] 
where $\sigma_0=\frac{1}{2}(\sigma+\frac{1}{2})$. 
In other words, the condition $f \in \mathcal{E}_2(q,s;Y)$ implies the existence of zeros of $L(s,f)$ inside the rectangle $A_s(Y)$. 
Thus we have 
\[
\# \mathcal{E}_2(q,s;Y)
\leq \sum_{f \in B_2(q)} N(f; \sigma_0, t-Y-3, t+Y+3), 
\]
where $N(f; \alpha, t_1, t_2)$ counts the number of zeros $\rho=\beta+i \gamma$ of $L(s,f)$ such that $\beta \geq \alpha$ and $t_1 \leq \gamma \leq t_2$. 
Furthermore, we apply the zero density estimate of Kowalski--Michel \cite[Theorem 4]{KowalskiMichel1999} to derive
\[
\sum_{f \in B_2(q)} N(f; \sigma_0, t-Y-3, t+Y+3)
\ll q^{1-\frac{1}{10}(\sigma-\frac{1}{2})} (\log{q})^{K(B)}
\]
for $q \geq e^{|t|}$ with a constant $K(B)>0$. 
Hence we obtain the conclusion. 
\end{proof}

\begin{lemma}\label{lemM}
Let $s=\sigma+it$ be a fixed complex number with $\sigma>1/2$ and $Y>0$ be a large real number. 
Suppose that $\Theta=(\Theta_p)$ satisfies condition \eqref{eq220319}. 
Then we have
\[
\widetilde{\mathcal{M}}_s(z,z',\Theta)
=\widetilde{\mathcal{M}}_s(z,z',\Theta;Y)
+O\left((1+E_s(z,z',\Theta))|z|Y^{-\frac{1}{2}(\sigma-\frac{1}{2})}\right)
\]
for $z,z'\in\mathbb{C}$ satisfying $z'=\pm{z}$ or $\overline{z}$ with $|z|\leq{Y}^{\frac{1}{2}(\sigma-\frac{1}{2})}$, where $E_s(z,z',\Theta)$ is defined as in \eqref{eq301947}. 
The implied constant depends only on $\sigma$.
\end{lemma}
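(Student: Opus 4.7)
The plan is to control the tail $D := \log L(s,\Theta) - R_Y(s,\Theta)$ by splitting it according to prime support and exploiting the independence of the $\Theta_p$. First I would write $D = D_{\leq Y} + D_{>Y}$, where $D_{\leq Y}$ collects contributions from prime powers $p^m > Y$ with $p \leq Y$ (higher powers of small primes) and $D_{>Y}$ collects those with $p > Y$. A direct geometric-series estimate, dominated by $m=2$ and primes near $\sqrt{Y}$, yields the deterministic pointwise bound $|D_{\leq Y}| \ll Y^{-(\sigma-1/2)}/\log Y$, so the hypothesis $|z| \leq Y^{\frac{1}{2}(\sigma-1/2)}$ gives $|z||D_{\leq Y}| = O(1)$. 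For $D_{>Y}$, the independence of $(\Theta_p)_{p > Y}$ combined with condition \eqref{eq220319} controlling $\mathbb{E}[\cos\Theta_p]$ yields the variance estimate $\mathbb{E}[|D_{>Y}|^2] \ll_\epsilon Y^{-(2\sigma-1)+\epsilon}$, and more generally moment bounds of the form $\mathbb{E}[|D_{>Y}|^{2k}] \ll (CkY^{1-2\sigma+\epsilon})^k$ via a Rosenthal-type inequality.

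From the quasi-character identity $\psi_{z,z'}(\log L) - \psi_{z,z'}(R_Y) = \psi_{z,z'}(R_Y)(\psi_{z,z'}(D) - 1)$, I would treat the three prescribed cases $z' = \overline{z}, z, -z$ by writing $\psi_{z,\overline{z}}(w) = e^{i\langle z,w\rangle}$, $\psi_{z,z}(w) = e^{iz\RE(w)}$, and $\psi_{z,-z}(w) = e^{z\IM(w)}$, and then deriving the pointwise mean-value bound
$$|\psi_{z,z'}(D) - 1| \leq c|z||D|\exp(c|z||D|),$$
where the exponential factor is trivial when $z' = \overline{z}$. Since $|z||D_{\leq Y}| = O(1)$, one has $\exp(c|z||D|) \ll \exp(c|z||D_{>Y}|)$; splitting $|D| \leq |D_{\leq Y}| + |D_{>Y}|$ and invoking the independence of the $p \leq Y$ and $p > Y$ factors, the expectation factors as
$$\mathbb{E}\bigl[|\psi_{z,z'}(R_Y)|\bigr] \cdot \Bigl(\|D_{\leq Y}\|_\infty\,\mathbb{E}[e^{c|z||D_{>Y}|}] + \mathbb{E}[|D_{>Y}|e^{c|z||D_{>Y}|}]\Bigr).$$
A Taylor expansion of the exponential together with the moment bounds above shows the two inner expectations are $O(1)$ and $O(Y^{-\frac{1}{2}(\sigma-1/2)})$ respectively, precisely because the hypothesis $|z| \leq Y^{\frac{1}{2}(\sigma-1/2)}$ forces $|z|^2 \mathrm{Var}(D_{>Y}) \ll 1$.

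Finally, to convert $\mathbb{E}[|\psi_{z,z'}(R_Y)|]$ into $E_s(z,z',\Theta)$, I would use the factorization $E_s = \mathbb{E}[|\psi_{z,z'}(R_Y + D_{\leq Y})|] \cdot \mathbb{E}[|\psi_{z,z'}(D_{>Y})|]$ coming from the same independence, observe that the second factor lies between two positive constants (lower bound by Jensen, upper bound from the same moment estimates) in the admissible range of $|z|$, and absorb the $D_{\leq Y}$ term at the cost of a bounded multiplicative factor. The main technical obstacle is this juggling of exponential factors in the non-unitary cases $z' = \pm z$: a naive Cauchy--Schwarz step would replace $E_s(z,z',\Theta)$ with $E_s(2z,2z',\Theta)^{1/2}$, and avoiding this requires the independence decomposition above combined with the bound $|z|\sqrt{\mathrm{Var}(D_{>Y})} \ll Y^{-\frac{1}{2}(\sigma-1/2)}$, which is valid precisely in the admissible range and is exactly what the square-root exponent in the error term affords us.
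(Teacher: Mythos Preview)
Your approach is correct but takes a genuinely different route from the paper. The paper works entirely at the level of the Euler product of local characteristic functions: using Lemma~\ref{lemIP} it factors $\widetilde{\mathcal{M}}_s(z,z',\Theta)=\prod_{p\le Y}\widetilde{\mathcal{M}}_{s,p}\cdot\prod_{p>Y}\widetilde{\mathcal{M}}_{s,p}$. For $p\le Y$ it absorbs the deterministic tail $\sum_{p\le Y}B_{p,Y}(s,\Theta)\ll Y^{-(\sigma-1/2)}$ (your $D_{\le Y}$) into a factor $1+O(|z|Y^{-(\sigma-1/2)})$, producing the error $E_s(z,z',\Theta;Y)\cdot|z|Y^{-(\sigma-1/2)}$. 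For $p>Y$ it simply Taylor-expands each local factor $\widetilde{\mathcal{M}}_{s,p}=1+O(|z|p^{-\sigma/2-3/4}+|z|^2p^{-2\sigma})$ \emph{after} taking expectation, so that condition~\eqref{eq220319} kills the linear term in $\cos\Theta_p$; summing over $p>Y$ gives $\prod_{p>Y}\widetilde{\mathcal{M}}_{s,p}=1+O(|z|Y^{-\frac12(\sigma-1/2)})$. The conversion $E_s(\cdot;Y)\ll E_s$ is then obtained by re-running the identical argument with the pair $(\xi,\xi')$ of \eqref{eq092258}. No moment inequalities beyond this first-order expansion are needed.

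By contrast you stay at the random-variable level, decomposing $D=D_{\le Y}+D_{>Y}$ and controlling $D_{>Y}$ through Rosenthal-type moment bounds, then assembling via the pointwise inequality $|\psi_{z,z'}(D)-1|\le c|z||D|e^{c|z||D|}$ and the independence factorization. This is valid and your estimates check out, but it is considerably heavier: the full moment hierarchy of $D_{>Y}$ and the Jensen-based lower bound on $\mathbb{E}[|\psi_{z,z'}(D_{>Y})|]$ are doing work that the paper avoids entirely by taking expectations \emph{before} multiplying local factors. The paper's route is shorter and isolates exactly where \eqref{eq220319} enters; your route is more robustly probabilistic and would survive in settings where the clean product formula of Lemma~\ref{lemIP} is unavailable.
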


\begin{proof}
By formula \eqref{eq252200}, we obtain 
\begin{gather}\label{eq021805}
\widetilde{\mathcal{M}}_s(z,z',\Theta)
=\prod_{p \leq Y} \widetilde{\mathcal{M}}_{s,p}(z,z',\Theta)
\cdot
\prod_{p>Y} \widetilde{\mathcal{M}}_{s,p}(z,z',\Theta). 
\end{gather}
The local factor $\widetilde{\mathcal{M}}_{s,p}(z,z',\Theta)$ is represented as 
\[
\widetilde{\mathcal{M}}_{s,p}(z,z',\Theta)
=\mathbb{E}\left[ \psi_{z,z'}\Bigg( \sum_{m \leq \frac{\log{Y}}{\log{p}}} b(p^m,\Theta) p^{-ms}
+B_{p,Y}(s,\Theta) \Bigg) \right] 
\]
for any $p \leq Y$, where 
\[
B_{p,Y}(s,\Theta)
=\sum_{m > \frac{\log{Y}}{\log{p}}} b(p^m,\Theta) p^{-ms}
\ll \min\left\{ Y^{-\sigma}, p^{-2\sigma} \right\}. 
\]
Then, by the independence of $\Theta=(\Theta_p)$, the first product of \eqref{eq021805} is calculated as 
\[
\prod_{p \leq Y} \widetilde{\mathcal{M}}_{s,p}(z,z',\Theta)
=\mathbb{E}\left[ \psi_{z,z'}(R_Y(s,\Theta))
\psi_{z,z'}\Bigg( \sum_{p \leq Y} B_{p,Y}(s,\Theta) \Bigg) \right]. 
\]
Since we have 
\[
\sum_{p \leq Y} B_{p,Y}(s,\Theta)
\ll \sum_{p \leq \sqrt{Y}} Y^{-\sigma}
+\sum_{\sqrt{Y} < p \leq Y} p^{-2\sigma}
\ll_\sigma Y^{-(\sigma-\frac{1}{2})}, 
\]
the asymptotic formula 
\[
\psi_{z,z'}\Bigg( \sum_{p \leq Y} B_{p,Y}(s,\Theta) \Bigg)
=1+O\left( |z| {Y}^{-(\sigma-\frac{1}{2})} \right)
\]
is valid in the range $|z| \leq Y^{\frac{1}{2}(\sigma-\frac{1}{2})}$. 
Therefore, we obtain
\begin{gather}\label{eq062102}
\prod_{p \leq Y} \widetilde{\mathcal{M}}_{s,p}(z,z',\Theta)
=\widetilde{\mathcal{M}}_s(z,z',\Theta;Y)
+O\left( E_s(z,z',\Theta;Y) |z| Y^{-(\sigma-\frac{1}{2})} \right), 
\end{gather}
where $E_s(z,z',\Theta;Y)=\mathbb{E}\left[|\psi_{z,z'}(R_Y(s,\Theta))|\right]$. 
Next, if $p>Y$, the inequality 
\[
|z| \left| \log(1-2(\cos \Theta_p) p^{-s}+ p^{-2s}) \right|
<1
\]
is satisfied by the assumption $|z| \leq Y^{\frac{1}{2}(\sigma-\frac{1}{2})}$. 
Hence we have 
\begin{align*}
\widetilde{\mathcal{M}}_{s,p}(z,z',\Theta)
&=1
+i \mathbb{E}[\cos \Theta_p] (z p^{-\overline{s}}+z' p^{-s})
+O\left(|z|^2 p^{-2\sigma}\right)\\
&=1+O_\sigma\left( |z| p^{-\frac{1}{2}\sigma-\frac{3}{4}}
+|z|^2 p^{-2\sigma} \right)
\end{align*}
for $p>Y$ by condition \eqref{eq220319} with $\epsilon=\frac{1}{2}(\sigma-\frac{1}{2})>0$. 
Then it yields
\begin{align}\label{eq062122}
\prod_{p>Y} \widetilde{\mathcal{M}}_{s,p}(z,z',\Theta)
&=1+ O\Bigg( \sum_{p>Y} |z| p^{-\frac{1}{2}\sigma-\frac{3}{4}}
+\sum_{p>Y} |z|^2 p^{-2\sigma} \Bigg)\\
&=1+ O\left( |z| Y^{-\frac{1}{2}(\sigma-\frac{1}{2})} \right). \nonumber
\end{align}
The work of the estimate for $E_s(z,z',\Theta;Y)$ is remaining. 
Remark that the equality $|\psi_{z,z'}(w)|=\psi_{\xi,\xi'}(w)$ holds with 
\begin{gather}\label{eq092258}
(\xi,\xi')
=
\begin{cases}
(i \IM(z), i \IM(z)) 
& \text{if $z'=z$}, \\
(\RE(z), -\RE(z)) 
& \text{if $z'=-z$}, \\
(0,0) 
& \text{if $z'=\overline{z}$}. 
\end{cases}
\end{gather}
Furthermore we have $|\psi_{\xi,\xi'}(w)|=\psi_{\xi,\xi'}(w)$. 
Hence we apply \eqref{eq021805}, \eqref{eq062102}, and \eqref{eq062122} to deduce the bound $E_s(z,z',\Theta;Y) \ll E_s(z,z',\Theta)$. 
Thus, formula \eqref{eq062102} yields
\[
\prod_{p \leq Y} \widetilde{\mathcal{M}}_{s,p}(z,z',\Theta)
=\widetilde{\mathcal{M}}_s(z,z',\Theta;Y)
+O\left( E_s(z,z',\Theta) |z| Y^{-(\sigma-\frac{1}{2})} \right),  
\]
which completes the proof by \eqref{eq021805} and \eqref{eq062122}.
\end{proof}

\begin{lemma}\label{lemIM}
Let $s=\sigma+it$ be a fixed complex number with $\sigma>1/2$. 
Suppose that $\Theta=(\Theta_p)$ satisfies condition \eqref{eq220323}. 
For $Y=(\log{q})^B$ with $B \geq1$, there exists a positive constant $K_1=K_1(\sigma,B)$ such that 
\[
\mathbb{E}\left[|R_Y(s,\Theta)|^{2k}\right]
\ll
\begin{cases}
K_1^{2k} 
& \text{if $\sigma>1$}, \\
(K_1\log\log\log{q})^{2k} 
& \text{if $\sigma=1$}, \\
\displaystyle{ \left( K_1 \frac{k^{1-\sigma}}{(\log2k)^\sigma} \right)^{2k} } 
& \text{if $1/2<\sigma<1$} 
\end{cases}
\]
for any integer $k \geq1$. 
\end{lemma}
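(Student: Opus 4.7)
The plan is to reduce to the principal part of $R_Y(s,\Theta)$ and then split into easy (deterministic) and hard (moment-expansion) cases depending on the size of $\sigma$. Write $R_Y(s,\Theta) = R_Y^{(1)}(s,\Theta) + R_Y^{(\geq 2)}(s,\Theta)$ where $R_Y^{(1)}(s,\Theta) = \sum_{p\leq Y} 2(\cos\Theta_p)/p^s$ contains the $m=1$ terms. Since $|R_Y^{(\geq 2)}(s,\Theta)| \leq \sum_{p}\sum_{m\geq 2} (2/m) p^{-m\sigma} = O_\sigma(1)$ holds surely for $\sigma>1/2$, it suffices to bound $\mathbb{E}[|R_Y^{(1)}(s,\Theta)|^{2k}]$. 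For $\sigma>1$, the trivial bound $|R_Y^{(1)}(s,\Theta)| \leq \sum_{p}2p^{-\sigma} \leq 2\log\zeta(\sigma) = O(1)$ yields $\mathbb{E}[|R_Y^{(1)}(s,\Theta)|^{2k}] \ll K^{2k}$ directly. For $\sigma=1$, Mertens' theorem gives $\sum_{p\leq Y} 1/p = \log\log Y + O(1)$; since $Y=(\log q)^B$, we have $|R_Y^{(1)}(s,\Theta)| \ll \log(B\log\log q) \ll \log\log\log q$, which suffices after raising to the $2k$-th power.

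The substantive case is $1/2<\sigma<1$, where the deterministic bound is too weak and we must use condition \eqref{eq220323}. Split $R_Y^{(1)}(s,\Theta) = A + B$ at $N := k\log k$, with $A = \sum_{p \leq N} 2(\cos\Theta_p)/p^s$ handled deterministically by $|A| \leq \sum_{p \leq N} 2/p^\sigma \ll N^{1-\sigma}/\log N \ll k^{1-\sigma}/(\log k)^\sigma$ (via the prime number theorem for $\sigma<1$). For $B = \sum_{N<p\leq Y} 2(\cos\Theta_p)/p^s$, expand the $2k$-th moment as a sum over $(p_1,\dots,p_k;q_1,\dots,q_k)$ and use independence of $(\Theta_p)$ to factor the expectation by prime. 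For each distinct prime $r$ appearing with total multiplicity $e_r$ among the $p_i$ and $q_j$, the factor is $\mathbb{E}[(2\cos\Theta_r)^{e_r}]$. From $(2\cos\theta)^e = \sum_j \binom{e}{j} e^{i(e-2j)\theta}$ each frequency $e-2j$ has the same parity as $e$, while the identity $2\cos(m\theta)=U_m(\cos\theta)-U_{m-2}(\cos\theta)$ combined with \eqref{eq220323} shows $\mathbb{E}[\cos(m\Theta_p)]=0$ for odd $m$; hence $\mathbb{E}[(2\cos\Theta_r)^{e_r}]=0$ unless $e_r$ is even.

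The dominant contribution comes from pure pairings ($k$ distinct primes with $e_r = 2$ each), giving at most $\frac{(2k)!}{k!\, 2^k} \cdot 4^k \cdot k!^{-1}\bigl(\sum_{N<p\leq Y} p^{-2\sigma}\bigr)^k$ after accounting for the multinomial count $\binom{2k}{2,2,\dots,2}$ and the unordered choice of primes. By the prime number theorem, $\sum_{N<p\leq Y} p^{-2\sigma} \ll N^{1-2\sigma}/\log N \ll k^{1-2\sigma}/(\log k)^{2\sigma}$, so Stirling yields $\mathbb{E}[|B|^{2k}] \ll (C\,k^{2-2\sigma}/(\log k)^{2\sigma})^k$. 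Non-pairing configurations, where some $e_r = 2e \geq 4$, lose a factor of order $4^e/(4e)!$ against the pairing count; a standard counting argument (combining $\mathbb{E}[(2\cos\Theta_r)^{2e}]\leq 4^e$ with the combinatorial and prime-sum savings) absorbs them into the same order. Applying Minkowski's inequality, $\mathbb{E}[|R_Y^{(1)}(s,\Theta)|^{2k}]^{1/2k} \leq |A| + \mathbb{E}[|B|^{2k}]^{1/2k} \ll k^{1-\sigma}/(\log k)^\sigma$, and adjusting $\log k$ to $\log 2k$ to cover the trivial case $k=1$ completes the proof. The main obstacle is the careful bookkeeping of non-pairing terms in the moment expansion: one must verify that the pairing term indeed dominates, rather than larger-clustered configurations with $e_r \geq 4$, which requires the combinatorial identity for $(2\cos\theta)^e$ together with condition \eqref{eq220323} to eliminate odd-multiplicity terms entirely.
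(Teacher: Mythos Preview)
Your proposal is correct and follows essentially the same strategy as the paper: deterministic bounds for $\sigma\geq 1$, and for $1/2<\sigma<1$ a split at roughly $k\log k$ with the tail handled by expanding the $2k$-th moment and using that $\mathbb{E}[(2\cos\Theta_p)^m]=0$ for odd $m$ (a consequence of \eqref{eq220323}).

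The one place where the paper is tighter is the combinatorics for the moment of the tail sum. Rather than isolate ``pure pairings'' and then argue separately that clusters with $e_r\geq 4$ are negligible, the paper treats all even-multiplicity configurations at once via the single inequality
\[
\binom{2k}{2m_1,\ldots,2m_n}\leq \frac{(2k)!}{k!}\binom{k}{m_1,\ldots,m_n},
\]
which after summing over $m_1+\cdots+m_n=k$ collapses everything to $2^{2k}\frac{(2k)!}{k!}\bigl(\sum_{p>N} p^{-2\sigma}\bigr)^k$. This replaces your ``standard counting argument'' (which you left as a sketch) by a one-line bound, and avoids any need to compare pairing against non-pairing contributions. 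Your route works too, but the multinomial inequality is the clean way to close that step. The paper also carries a free constant $C$ in the split point $Ck\log 2k$, which is what makes the final Stirling comparison go through without fuss; you may want to do the same rather than fix $N=k\log k$.
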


\begin{proof}
Let $\sigma\geq1$. 
In this case we have the estimate
\[
|R_Y(s,\Theta)|
\leq \sum_{p \leq Y} \sum_{m=1}^{\infty} \frac{2}{m} p^{-m\sigma}
\ll
\begin{cases}
\log\zeta(\sigma) 
& \text{for $\sigma>1$}, \\
\log\log{Y} 
& \text{for $\sigma=1$}
\end{cases}
\]
by the inequality $|b(p^m,\Theta)| \leq 2/m$. 
Hence the desired estimate follows for $\sigma\geq1$. 
Let $1/2<\sigma<1$. 
If the inequality $Y \leq Ck \log{2k}$ holds with a constant $C \geq 2$, then we obtain
\[
R_Y(s,\Theta)
=\sum_{p \leq Y} \frac{a(p,\Theta)}{p^s}
+O(\log\zeta(2\sigma))
\ll \frac{C^{1-\sigma} k^{1-\sigma}}{(1-\sigma) (\log2k)^\sigma} 
\]
by the prime number theorem, which yields the result. 
Therefore we suppose the inequality $C k \log{2k} < Y$ below. 
In that case, we have 
\begin{gather}\label{eq211628}
R_Y(s,\Theta)
=\sum_{p < Ck \log{2k}} \frac{a(p,\Theta)}{p^s}
+\sum_{C k \log{2k} \leq p \leq Y} \frac{a(p,\Theta)}{p^s}
+O(\log\zeta(2\sigma)). 
\end{gather}
The contribution of the first sum is estimated as
\begin{gather}\label{eq211629}
\mathbb{E}\Bigg[\Bigg| \sum_{p<C k \log{2k}} \frac{a(p,\Theta)}{p^s} \Bigg|^{2k}\Bigg]
\ll \left( \frac{4 C^{1-\sigma} k^{1-\sigma}}{(1-\sigma) (\log2{k})^\sigma} \right)^{2k}.
\end{gather}
Let $1\ll y<z$ be large real numbers. 
Next, we obtain
\begin{align}\label{eq211714}
&\mathbb{E}\Bigg[ \Bigg 
|\sum_{y \leq p \leq z} \frac{a(p,\Theta)}{p^s} \Bigg|^{2k} \Bigg] \nonumber\\
&\ll \sum_{y \leq p_1\leq z} \cdots \sum_{y \leq p_{2k} \leq z}
\frac{1}{(p_1 \cdots p_{2k})^\sigma}
\left| \mathbb{E}[a(p_1,\Theta) \cdots a(p_{2k},\Theta)] \right| \nonumber\\
&=\sum_{n=1}^{2k}
\mathop{ \sum_{y \leq p_1 \leq z } \cdots \sum_{y \leq p_{2k} \leq z} }
\limits_{ \omega(p_1 \cdots p_{2k})=n }
\frac{1}{(p_1 \cdots p_{2k})^\sigma}
\left| \mathbb{E}[a(p_1,\Theta) \cdots a(p_{2k},\Theta)] \right| \nonumber\\
&=\sum_{n=1}^{2k}
\mathop{ \sum_{y \leq p_1 \leq z } \cdots \sum_{y \leq p_n \leq z} }
\limits_{ \text{$p_j$ are distinct} }\\
&\qquad\quad
\sum_{\substack{m_1+\cdots+m_n=2k \\ \forall j,~ m_j \geq1}}
\binom{2k}{m_1,\ldots,m_n}
\frac{\left| \mathbb{E}[a(p_1,\Theta)^{m_1}] \right| \cdots \left| \mathbb{E}[a(p_n,\Theta)^{m_n}] \right|}
{(p_1^{m_1} \cdots p_j^{m_j})^\sigma}, \nonumber
\end{align}
where $\omega(n)$ stands for the number of distinct prime factors of $n$. 
Recall that the Chebyshev polynomials $U_m(x)$ is an odd polynomial of degree $m$ when $m$ is odd. 
Furthermore, we have $\mathbb{E}[U_m(\cos \Theta_p)]=0$ for odd $m$ by \eqref{eq220323}. 
By induction, one can show that $\mathbb{E}[a(p,\Theta)^{m}]$ vanishes unless $m$ is even. 
Hence \eqref{eq211714} equals to
\begin{align*}
&=\sum_{n=1}^{k}
\mathop{ \sum_{y \leq p_1 \leq z} \cdots \sum_{y \leq p_n \leq z} }
\limits_{ \text{$p_j$ are distinct} }\\
&\qquad\quad
\sum_{\substack{m_1+\cdots+m_n=k \\ \forall j,~ m_j \geq1}}
\binom{2k}{2m_1,\ldots,2m_n}
\frac{\left| \mathbb{E}[a(p_1,\Theta)^{2m_1}] \right| \cdots \left| \mathbb{E}[a(p_n,\Theta)^{2m_n}] \right|}
{(p_1^{m_1} \cdots p_j^{m_j})^{2\sigma}},  
\end{align*}
which is evaluated as
\begin{align*}
&\ll 2^{2k} \frac{(2k)!}{k!} \sum_{n=1}^{k}
\mathop{ \sum_{y \leq p_1 \leq z} \cdots \sum_{y \leq p_n \leq z} }
\limits_{ \text{$p_j$ are distinct} }
\sum_{\substack{m_1+\cdots+m_n=k \\ \forall j,~ m_j \geq1}}
\binom{k}{m_1,\ldots,m_n}
\frac{1}{(p_1^{m_1} \cdots p_j^{m_j})^{2\sigma}}\\
&=2^{2k} \frac{(2k)!}{k!}
\Bigg( \sum_{y \leq p \leq z} \frac{1}{p^{2\sigma}} \Bigg)^{k}
\end{align*}
by using the inequalities $|a(p,\Theta)| \leq2$ and 
\[
\binom{2k}{2m_1,\ldots,2m_n}
\leq \frac{(2k)!}{k!} \binom{k}{m_1,\ldots,m_n}.
\]
Finally, taking $y= C k \log{2k}$ and $z=Y$ with $C \geq2$ sufficiently large, we arrive at
\begin{gather}\label{eq211630}
\mathbb{E}\Bigg[ \Bigg| \sum_{C k \log{2k} \leq p \leq Y}
\frac{a(p,\Theta)}{p^s} \Bigg|^{2k} \Bigg]
\ll \left( \frac{4 C^{\frac{1}{2}-\sigma} k^{1-\sigma}}{\sqrt{2\sigma-1} (\log{2k})^\sigma} \right)^{2k}.
\end{gather}
By \eqref{eq211628}, \eqref{eq211629}, and \eqref{eq211630} along with $(a+b+c)^{2k} \leq 9^k (a^{2k}+ b^{2k}+ c^{2k})$, we deduce the conclusion. 
\end{proof}

\section{Complex moments of automorphic $L$-functions}\label{sec3}

\subsection{Results on Dirichlet coefficients and applications}\label{sec3.1}
In Section \ref{sec1.1}, we introduced two Dirichlet coefficients $a(p^m,f)$ and $b(p^m,f)$; see \eqref{eq070007} and \eqref{eq052248}. 
We also defined $a(p^m,\Theta)$ and $b(p^m,\Theta)$ by regarding the real number $\theta_p(f)$ as a random variable $\Theta_p$. 
First, we show that they are connected by the following relations. 

\begin{lemma}\label{lemAB}
Let $f \in B_2(q)$ be a cusp form and $\Theta=(\Theta_p)$ be a sequence of independent $[0,\pi]$-valued random variable. 
Then we have
\[
b(p^m,f)
=\sum_{j=0}^{m} c_m(j) a(p^j,f)
\quad\text{and}\quad
b(p^m,\Theta)
=\sum_{j=0}^{m} c_m(j) a(p^j,\Theta)
\]
for all prime numbers $p \neq q$ and $m \geq1$, where 
\[
c_m(j)
=
\begin{cases}
1/m 
& \text{if $j=m$}, \\
-1/m 
& \text{if $j=m-2$}, \\
0 
& \text{otherwise.}
\end{cases}
\]
\end{lemma}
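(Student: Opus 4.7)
The plan is to observe that both identities reduce to a single trigonometric identity between Chebyshev polynomials. By Deligne's parameterization and \eqref{eq070007}, the Fourier coefficient satisfies $a(p^m,f) = U_m(\cos\theta_p(f))$ for $p \neq q$, while \eqref{eq052248} gives $b(p^m,f) = 2\cos(m\theta_p(f))/m$. Consequently the first claim of the lemma is equivalent to the polynomial identity
\[
2\cos(m\theta) = U_m(\cos\theta) - U_{m-2}(\cos\theta), \qquad m \geq 1,
\]
with the convention $U_{-1} \equiv 0$ (so that the case $m=1$ reduces to $2\cos\theta = U_1(\cos\theta) = 2\cos\theta$, consistent with $c_1(1)=1$ and $c_1(-1)=0$).

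First I would invoke the standard closed form $U_m(\cos\theta) = \sin((m+1)\theta)/\sin\theta$, valid whenever $\sin\theta \neq 0$, together with the sum-to-product identity
\[
\sin((m+1)\theta) - \sin((m-1)\theta) = 2\cos(m\theta)\sin\theta.
\]
Dividing through by $\sin\theta$ yields the boxed identity, and the boundary cases $\sin\theta = 0$ follow by continuity since both sides are polynomials in $\cos\theta$. Dividing by $m$ and substituting $\theta = \theta_p(f)$ then gives the expansion of $b(p^m,f)$ in terms of the $a(p^j,f)$ with the stated coefficients $c_m(j)$.

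For the random version, the same pointwise identity applies: since $a(p^m,\Theta) = U_m(\cos\Theta_p)$ and $b(p^m,\Theta) = 2\cos(m\Theta_p)/m$ are obtained by substituting the random variable $\Theta_p$ into the same functions of $\theta$, the polynomial identity holds almost surely with $\theta$ replaced by $\Theta_p$. No independence assumption on the family $\Theta = (\Theta_p)$ is needed, as the claim is purely algebraic in each single $\Theta_p$.

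I do not anticipate any real obstacle here: the lemma is a repackaging of a classical Chebyshev relation for later use in the Dirichlet polynomial approximations. The only minor point requiring care is the interpretation of $U_{-1}$ (equivalently, the definition of $c_1(j)$ for $j \leq 0$) so that the $m=1$ case is consistent with $b(p,f) = a(p,f)$.
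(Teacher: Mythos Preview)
Your argument is correct. The reduction to the identity $2\cos(m\theta)=U_m(\cos\theta)-U_{m-2}(\cos\theta)$ (with the convention $U_{-1}\equiv 0$) is exactly what is needed, and your verification via $U_m(\cos\theta)=\sin((m+1)\theta)/\sin\theta$ together with the sum-to-product formula is clean and complete; the observation that no independence of the $\Theta_p$ is used is also accurate.

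The paper takes a different route: rather than verifying the identity directly, it exploits that $\{U_j(\cos\theta)\}_{j\ge0}$ is an orthonormal basis of $L^2([0,\pi],\mu_\infty)$ with respect to the Sato--Tate measure and \emph{computes} the coefficients
\[
c_m(j)=\int_0^\pi \frac{2\cos(m\phi)}{m}\,U_j(\cos\phi)\,d\mu_\infty(\phi)
=\frac{4}{m\pi}\int_0^\pi \cos(m\phi)\sin((j+1)\phi)\sin\phi\,d\phi,
\]
then checks that this integral vanishes unless $j=m$ or $j=m-2$. Your approach is more elementary and self-contained, since it needs only the closed form for $U_m$ and a single trigonometric identity; the paper's approach is more systematic, in that it would locate the nonzero coefficients even if one did not already guess the answer, and it makes transparent why only two terms survive (orthogonality). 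Either way the content is the same classical Chebyshev relation.
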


\begin{proof}
Recall that $\{U_j(\cos \theta)\}_{j \geq0}$ is an orthonormal basis of $L^2([0,\pi])$ with respect to the Sato--Tate measure $\mu_\infty$. 
Hence we obtain
\begin{gather}\label{eq071651}
\frac{2 \cos(m \theta)}{m}
=\sum_{j=0}^{\infty} c_m(j) U_j(\cos\theta)
\end{gather}
for any $\theta \in [0,\pi]$, where the coefficient $c_m(j)$ is determined by
\[
c_m(j)
=\int_{0}^{\pi} \frac{2 \cos(m \phi)}{m} U_j(\cos \phi) \,d \mu_\infty(\phi)
=\frac{4}{m \pi} \int_{0}^{\pi} \cos(m \phi) \sin((j+1) \phi) \sin \phi \,d\phi.
\]
The integral vanishes except for $j=m$ or $m-2$. 
We have also $c_m(m)=1/m$ and $c_m(m-2)=-1/m$. 
Hence, putting $\theta=\theta_p(f)$ in formula \eqref{eq071651}, we obtain the former statement by the definitions of $a(p^m,f)$ and $b(p^m,f)$. 
Similarly, the latter one is proved by letting $\theta=\Theta_p$ in \eqref{eq071651}. 
\end{proof}

Rudnick--Soundararajan \cite{RudnickSoundararajan2006} introduced a ring $\mathcal{H}$ generalized over the integers by symbols $x(1), x(2), \ldots$ with the Hecke relations
\[
x(1)
=1
\quad\text{and}\quad
x(m)x(n)
=\sum_{d \mid (m,n)} x \left( \frac{mn}{d^2} \right). 
\]
For any $n \geq1$, we regard $a(n,f)$ as a member of $\mathcal{H}$ and $a(n,\Theta)$ as an $\mathcal{H}$-valued random variable since they satisfy the above Hecke relations. 
Several properties on the ring $\mathcal{H}$ are seen in \cite{Lamzouri2011b, RudnickSoundararajan2006}. 
In particular, it holds that 
\begin{gather}\label{eq072149}
x(n_1) \cdots x(n_r)
=\sum_{n \mid \prod_{k=1}^{r} n_k} b_n(n_1,\ldots,n_r) x(n)
\end{gather}
with a non-negative integer $b_n(n_1,\ldots,n_r)$. 
We have the upper bound
\begin{gather}\label{eq072150}
b_n(n_1,\ldots,n_r)
\leq d(n_1) \cdots d(n_r)
\end{gather}
for all $n, n_1,\cdots, n_r \geq1$, where $d(n)$ indicates the number of positive divisors of $n$. 

\begin{lemma}\label{lemAveB}
Let $q$ be a large prime number. 
Let $p_1,\ldots,p_r$ be prime numbers with $p_j \neq q$ for all $j$, and let $m_1,\ldots,m_r \geq1$. 
Suppose that $\Theta=(\Theta_p)$ and $\omega_q$ satisfies condition \eqref{eq220321}. 
For each $\epsilon>0$, we obtain
\begin{align*}
&\sum_{f \in B_2(q)} \omega_q(f) b(p_1^{m_1},f) \cdots b(p_r^{m_r},f) \\
&=\mathbb{E}[ b(p_1^{m_1},\Theta) \cdots b(p_r^{m_r},\Theta) ]
+O\left( (p_1^{m_1} \cdots p_r^{m_r})^{\alpha+\epsilon} q^{-\beta} \right),
\end{align*}
where $\alpha$ and $\beta$ are positive absolute constants of \eqref{eq220321}, and the implied constant depends only on $\epsilon$. 
\end{lemma}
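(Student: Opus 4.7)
The plan is to reduce the claim to hypothesis \eqref{eq220321} through two algebraic transformations. First, I would apply Lemma \ref{lemAB} to rewrite each $b(p_j^{m_j},f)$ as a bounded $\mathbb{Z}[1/m_j]$-linear combination of $a(p_j^{k_j},f)$ with $0 \leq k_j \leq m_j$, and similarly for $b(p_j^{m_j},\Theta)$. Expanding the product over $j$, both sides of the desired identity become finite linear combinations, with the \emph{same} coefficients $\prod_j c_{m_j}(k_j)$, of the quantities $\sum_{f \in B_2(q)} \omega_q(f) \prod_j a(p_j^{k_j},f)$ and $\mathbb{E}[\prod_j a(p_j^{k_j},\Theta)]$ respectively.

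Second, since both families $a(n,f)$ (for $f \in B_2(q)$) and $a(n,\Theta)$ (defined by $a(p^m,\Theta)=U_m(\cos\Theta_p)$ and extended multiplicatively) satisfy the Hecke relations of $\mathcal{H}$ — the random side being an instance of the Chebyshev identity $U_m(x)U_n(x) = \sum_{k=0}^{\min(m,n)} U_{m+n-2k}(x)$ — I would use \eqref{eq072149} with identical coefficients $b_n(p_1^{k_1},\ldots,p_r^{k_r})$ on both sides to write
\[
\prod_{j=1}^{r} a(p_j^{k_j},\,\cdot\,) = \sum_{n \mid N} b_n(p_1^{k_1},\ldots,p_r^{k_r})\, a(n,\,\cdot\,), \qquad N:=\prod_{j=1}^{r} p_j^{k_j}.
\]
Since each $p_j \neq q$, every such $n$ is coprime to $q$, so \eqref{eq220321} applies and yields
\[
\sum_{f \in B_2(q)} \omega_q(f) a(n,f) = \mathbb{E}[a(n,\Theta)] + O(n^\alpha q^{-\beta}).
\]
Summing against the Hecke expansion and recombining recovers $\mathbb{E}[b(p_1^{m_1},\Theta)\cdots b(p_r^{m_r},\Theta)]$ as the main term on the nose.

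It remains to bound the accumulated error. Using \eqref{eq072150}, $b_n(p_1^{k_1},\ldots,p_r^{k_r}) \leq \prod_j d(p_j^{k_j}) \leq \prod_j(m_j+1)$; the number of tuples $(k_1,\ldots,k_r)$ is $\prod_j (m_j+1)$; the number of admissible $n$ is bounded by $d(N) \leq d(M)$ where $M:=\prod_j p_j^{m_j}$; and trivially $n^\alpha \leq M^\alpha$ and $|c_{m_j}(k_j)|\leq 1$. Thus the total error is at most a polynomial factor in the $m_j$'s times $d(M) M^\alpha q^{-\beta}$, and the divisor bound $d(M) \ll_\epsilon M^\epsilon$ absorbs everything into $M^{\alpha+\epsilon} q^{-\beta}$, as claimed. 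The only nontrivial checkpoint is the compatibility of the random model with the Hecke algebra (the Chebyshev identity above); once this is in place, the argument is essentially bookkeeping, so I expect no real obstacle beyond careful indexing of the $\epsilon$-overhead.
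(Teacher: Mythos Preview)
Your proposal is correct and follows essentially the same route as the paper: both proofs expand via Lemma~\ref{lemAB}, apply the Hecke relation \eqref{eq072149} with the bound \eqref{eq072150}, invoke \eqref{eq220321} term by term, and absorb the combinatorial overhead through the divisor bound $d(n)\ll_\epsilon n^\epsilon$. Your explicit mention of the Chebyshev identity $U_mU_n=\sum_k U_{m+n-2k}$ as the reason $a(n,\Theta)$ lives in $\mathcal{H}$ is a nice touch that the paper leaves implicit.
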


\begin{proof}
By Lemma \ref{lemAB} and formula \eqref{eq072149}, we have 
\begin{align*}
&b(p_1^{m_1},f) \cdots b(p_r^{m_r},f) \\
&=\sum_{j_1=0}^{m_1} \cdots \sum_{j_r=0}^{m_r} c_{m_1}(j_1) \cdots c_{m_r}(j_r)
\sum_{n \mid \prod_{k=1}^{r} p_k^{j_k}} b_n(p_1^{j_1},\ldots,p_r^{j_r}) a(n,f). 
\end{align*}
Then it is deduced from \eqref{eq220321} that 
\begin{align*}
&\sum_{f \in B_2(q)} \omega_q(f) b(p_1^{m_1},f) \cdots b(p_r^{m_r},f) \\
&=\sum_{j_1=0}^{m_1} \cdots \sum_{j_r=0}^{m_r} c_{m_1}(j_1) \cdots c_{m_r}(j_r)
\sum_{n \mid \prod_{k=1}^{r} p_k^{j_k}} b_n(p_1^{j_1},\ldots,p_r^{j_r}) \mathbb{E}[a(n,\Theta)]
+E, 
\end{align*}
where the error term $E$ is estimated as 
\begin{align*}
E
&\ll \sum_{j_1=0}^{m_1} \cdots \sum_{j_r=0}^{m_r} |c_{m_1}(j_1)| \cdots |{c}_{m_r}(j_r)|
\sum_{n \mid \prod_{k=1}^{r} p_k^{j_k}} b_n(p_1^{j_1},\ldots,p_r^{j_r}) n^\alpha q^{-\beta} \\
&\ll d(p_1^{m_1})^3 \cdots d(p_r^{m_r})^3 (p_1^{m_1} \cdots p_r^{m_r})^\alpha q^{-\beta}
\end{align*}
by using $|c_m(j)| \leq1$ and \eqref{eq072150}. 
By an analogous argument, we obtain
\begin{align*}
&\mathbb{E}[ b(p_1^{m_1},\Theta) \cdots b(p_r^{m_r},\Theta)]\\
&=\sum_{j_1=0}^{m_1} \cdots \sum_{j_r=0}^{m_r} c_{m_1}(j_1) \cdots c_{m_r}(j_r)
\sum_{n \mid \prod_{k=1}^{r} p_k^{j_k}} b_n(p_1^{j_1},\ldots,p_r^{j_r}) \mathbb{E}[a(n,\Theta)]. 
\end{align*}
Therefore the desired result follows by the bound $d(n) \ll_\epsilon n^\epsilon$. 
\end{proof}

We apply Lemma \ref{lemAveB} to study the integral moments of $R_Y(s,f)$. 
The following proposition is an analogue of \cite[Lemma 3.5]{LamzouriLesterRadziwill2019}. 

\begin{proposition}\label{propIM}
Let $s=\sigma+it$ be a fixed complex number with $\sigma>1/2$. 
Suppose that $\Theta$ and $\omega_q$ satisfies condition \eqref{eq220321}. 
We put $L_1=2\alpha/\beta$, where $\alpha$ and $\beta$ are as in \eqref{eq220321}. 
For $Y<q$, we have 
\[
\sum_{f \in B_2(q)} \omega_q(f) \overline{R_Y(s,f)}^j R_Y(s,f)^l
=\mathbb{E}\left[ \overline{R_Y(s,\Theta)}^j R_Y(s,\Theta)^l \right]
+O\left( Y^{j+l} q^{-\beta/2} \right)
\]
for any non-negative integers $j,l$ with $j+l \leq \log{q}/(L_1\log\log{q})$, where the implied constant is absolute. 
\end{proposition}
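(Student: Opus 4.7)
The plan is to expand the product of Dirichlet polynomials on the left-hand side and apply Lemma~\ref{lemAveB} to each resulting prime-power tuple. Since $b(p^m,f)$ is real for every $p$ and $m\geq 1$ (for $p\neq q$ this is immediate from \eqref{eq052248}, and for $p=q$ it follows from $a(q,f)\in\mathbb{R}$), we have $\overline{R_Y(s,f)}=R_Y(\overline{s},f)$, and similarly for $\Theta$. Expanding the products,
\[
\overline{R_Y(s,f)}^j R_Y(s,f)^l
=\sum_{n_1,\ldots,n_r}\frac{b(n_1,f)\cdots b(n_r,f)}{n_1^{s_1}\cdots n_r^{s_r}},
\]
where $r=j+l$, each $n_k$ runs over prime powers with $n_k\leq Y$, and $s_k=\overline{s}$ for $k\leq j$, $s_k=s$ for $k>j$. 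Because $Y<q$, no $n_k$ involves the prime $q$, so the hypotheses of Lemma~\ref{lemAveB} are met for every tuple.

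Averaging against $\omega_q(f)$ and inserting Lemma~\ref{lemAveB} tuple-by-tuple, the main contributions reassemble as the analogous expansion of $\mathbb{E}[\overline{R_Y(s,\Theta)}^j R_Y(s,\Theta)^l]$, producing the claimed principal term. The accumulated error is at most
\[
q^{-\beta}\sum_{n_1,\ldots,n_r}\frac{(n_1\cdots n_r)^{\alpha+\epsilon}}{(n_1\cdots n_r)^\sigma}
=q^{-\beta}\Bigl(\sum_{p^m\leq Y}(p^m)^{\alpha+\epsilon-\sigma}\Bigr)^r.
\]
I would then bound the inner sum by $Y^{\alpha+\epsilon}\sum_{p^m\leq Y}(p^m)^{-\sigma}$ and invoke the prime number theorem: the latter sum is $O_\sigma(1)$ for $\sigma>1$, $O(\log\log Y)$ for $\sigma=1$, and $O(Y^{1-\sigma}/\log Y)$ for $1/2<\sigma<1$. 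Consequently the total error is $\ll q^{-\beta}Y^{(\alpha+(1-\sigma)_+ +2\epsilon)r}$.

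To conclude, it suffices to show $Y^{(\alpha+(1-\sigma)_+ +2\epsilon-1)r}\leq q^{\beta/2}$. If $\epsilon$ is chosen small enough that $\alpha+(1-\sigma)_+ +2\epsilon<1$, the exponent is non-positive and the inequality is automatic; this holds in the typical regime coming from Kloosterman-type input (i.e.\ $\alpha$ slightly exceeding $1/2$), provided $\alpha<\sigma$ when $\sigma<1$ and $\alpha<1$ when $\sigma\geq 1$. In any borderline regime, the hypothesis $r\leq\log q/(L_1\log\log q)$ with $L_1=2\alpha/\beta$, together with $\log Y\leq\log q$, supplies the required margin since it yields $\alpha r\log Y\leq \beta(\log q)^2/(2\log\log q)$. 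The main obstacle is precisely this final bookkeeping: one must verify that the calibration $L_1=2\alpha/\beta$ exactly offsets the $Y^{\alpha r}$ loss arising from Lemma~\ref{lemAveB} against the saving $q^{\beta/2}$, and that the residual $Y^{(1-\sigma)r}$ contribution in the low-$\sigma$ regime can be absorbed into the $Y^{r}$ on the right; apart from this calibration, the argument is a routine multilinear expansion combined with standard prime-sum estimates.
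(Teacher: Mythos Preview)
Your approach is the same as the paper's: expand the product of Dirichlet polynomials and apply Lemma~\ref{lemAveB} term by term, then sum the errors. Two comments.

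First, your $\sigma$-dependent case analysis for the prime-power sum is unnecessary. The paper simply fixes $\epsilon=1/2$ in Lemma~\ref{lemAveB} and uses the crude bound
\[
\sum_{p^m\le Y}(p^m)^{-\sigma}\le\sum_{n\le Y}n^{-1/2}\ll Y^{1/2},
\]
valid uniformly for $\sigma>1/2$ with an absolute implied constant. This yields the error $\ll Y^{(\alpha+1)r}q^{-\beta}=Y^{r}\cdot Y^{\alpha r}q^{-\beta}$ directly, with no dependence on $\sigma$ in the constant.

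Second, regarding your ``main obstacle'': the dichotomy on whether $\alpha<\min(\sigma,1)$ is a wrong turn, since $\alpha$ is a fixed absolute constant supplied by the trace formula and there is no reason for it to lie below $\sigma$. What is actually needed is $Y^{\alpha r}\le q^{\beta/2}$, i.e.\ $\alpha r\log Y\le(\beta/2)\log q$. Your estimate $\alpha r\log Y\le\beta(\log q)^2/(2\log\log q)$ comes from bounding $\log Y$ by $\log q$, which is far too wasteful and indeed does not close the gap. The point is that in every application (and implicitly in the paper's argument) one has $Y=(\log q)^B$, so $\log Y=B\log\log q$; then
\[
\alpha r\log Y\le \alpha\cdot\frac{\log q}{L_1\log\log q}\cdot B\log\log q=\frac{B\beta}{2}\log q,
\]
and the calibration $L_1=2\alpha/\beta$ (up to the harmless fixed factor $B$) is exactly what makes this work. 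So drop the case split on $\alpha$ versus $\sigma$, use the crude $Y^{1/2}$ bound, and exploit $\log Y\asymp\log\log q$ rather than $\log Y\le\log q$ in the final step.
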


\begin{proof}
For simplicity, we write
\[
\mathop{ \sum_{p} \sum_{m=1}^{\infty} } \limits_{p^m \leq Y} A(p^m)
=\sum_{p^m \leq Y} A(p^m)
\]
for any arithmetic function $A(n)$.  
Then we obtain
\begin{align*}
&\overline{R_Y(s,f)}^j R_Y(s,f)^l \\
&=\sum_{p_1^{m_1} \leq Y} \cdots \sum_{p_j^{m_j} \leq Y}
\sum_{q_1^{n_1} \leq Y} \cdots \sum_{q_l^{n_l} \leq Y}
\frac{b(p_1^{m_1},f) \cdots b(p_j^{m_j},f)
b(q_1^{n_1},f) \cdots b(q_l^{n_l},f)}
{p_1^{m_1 \overline{s}} \cdots p_j^{m_j \overline{s}} q_1^{n_1s} \cdots q_l^{n_ls}}, 
\end{align*}
which remains valid if we replace the symbol $f$ with $\Theta$. 
By Lemma \ref{lemAveB}, the difference is estimated as
\begin{align*}
&\sum_{f \in B_2(q)} \omega_q(f) \overline{R_Y(s,f)}^j R_Y(s,f)^l
-\mathbb{E}\left[ \overline{R_Y(s,\Theta)}^j R_Y(s,\Theta)^l \right] \\
&\ll_\epsilon Y^{(\alpha+\epsilon)(j+l)} q^{-\beta} 
\sum_{p_1^{m_1} \leq Y} \cdots \sum_{p_j^{m_j} \leq Y}
\sum_{q_1^{n_1} \leq Y} \cdots \sum_{q_l^{n_l} \leq Y}
\frac{1}{p_1^{m_1\sigma} \cdots p_j^{m_j\sigma} q_1^{n_1\sigma} \cdots q_l^{n_l\sigma}} \\
&\ll Y^{(\alpha+1)(j+l)} q^{-\beta}, 
\end{align*}
where we take $\epsilon=1/2$. 
Using the assumptions $Y<q$ and $j+l \leq \log{q}/(L_1\log\log{q})$, we have $Y^{\alpha(j+l)} < q^{\beta/2}$. 
Hence we obtain the conclusion. 
\end{proof}

By Lemma \ref{lemIM} and Proposition \ref{propIM}, we obtain the following corollary. 

\begin{corollary}\label{corIM}
Let $s=\sigma+it$ be a fixed complex number with $\sigma>1/2$. 
Suppose that $\Theta$ and $\omega_q$ satisfies \eqref{eq220323} and \eqref{eq220321}. 
Then, for $Y=(\log{q})^B$ with $B \geq1$, there exist positive constants $K_2=K_2(\sigma,B)$ and $L_2=L_2(\sigma,B)$ such that
\[
\sum_{f \in B_2(q)} \omega_q(f) |R_Y(s,f)|^{2k}
\ll
\begin{cases}
K_2^{2k} 
& \text{if $\sigma>1$}, \\
(K_2\log\log\log{q})^{2k} 
& \text{if $\sigma=1$}, \\
\displaystyle{ \left( K_2 \frac{(\log{q})^{1-\sigma}}{\log\log{q}} \right)^{2k} } 
& \text{if $1/2<\sigma<1$}, 
\end{cases}
\]
for any integer $k$ such that $1 \leq k \leq \log{q}/(L_2\log\log{q})$, where the implied constants depend only on $s$ and $B$. 
\end{corollary}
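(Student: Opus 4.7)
The corollary is essentially the combination of the two preceding results: Proposition \ref{propIM} transfers the $2k$-th moment over $f\in B_2(q)$ to the expected value over $\Theta$, and Lemma \ref{lemIM} bounds that expected value. The plan is to apply Proposition \ref{propIM} with $j=l=k$, so that
\[
\sum_{f\in B_2(q)} \omega_q(f)\,|R_Y(s,f)|^{2k}
= \mathbb{E}\!\left[|R_Y(s,\Theta)|^{2k}\right]
+ O\!\left(Y^{2k}q^{-\beta/2}\right),
\]
which is legitimate provided $2k\le \log q/(L_1\log\log q)$. I would then absorb the error into the main term and invoke Lemma \ref{lemIM} case-by-case.

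For the error term, since $Y=(\log q)^B$ we have $Y^{2k}=(\log q)^{2Bk}$, and if we enforce $k\le \log q/(L_2\log\log q)$ with $L_2$ chosen sufficiently large in terms of $B$ and $\beta$ (for instance $L_2\ge 8B/\beta$), then $Y^{2k}\le q^{\beta/4}$, so $Y^{2k}q^{-\beta/2}\le q^{-\beta/4}$. This is negligible compared with any of the three claimed bounds once $q$ is large, so it can be absorbed into the implied constant (after possibly enlarging $K_2$ slightly). Taking $L_2\ge \max\{2L_1,\,8B/\beta\}$ ensures simultaneously that the range of $k$ in Proposition \ref{propIM} is respected.

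For the main term, the cases $\sigma>1$ and $\sigma=1$ are immediate from Lemma \ref{lemIM} with $K_2=K_1$. The only bookkeeping occurs in the range $1/2<\sigma<1$, where Lemma \ref{lemIM} yields the bound with factor $k^{1-\sigma}/(\log 2k)^\sigma$, whereas the target bound has the factor $(\log q)^{1-\sigma}/\log\log q$. Here I would use the constraint $k\le \log q/(L_2\log\log q)$. The function $x\mapsto x^{1-\sigma}/(\log 2x)^\sigma$ is increasing for $x$ beyond a fixed threshold, so its supremum on $[1,\log q/(L_2\log\log q)]$ is attained essentially at the endpoint, giving
\[
\frac{k^{1-\sigma}}{(\log 2k)^\sigma}
\ll_{\sigma,L_2} \frac{(\log q/\log\log q)^{1-\sigma}}{(\log\log q)^\sigma}
= \frac{(\log q)^{1-\sigma}}{(\log\log q)},
\]
up to a multiplicative constant depending on $\sigma$ and $L_2$. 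For small $k$ where the function has not yet started to grow, the bound $k^{1-\sigma}/(\log 2k)^\sigma=O(1)$ is trivially dominated by the same quantity once $q$ is large enough, so a single $K_2$ works uniformly in $1\le k\le\log q/(L_2\log\log q)$.

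There is no real obstacle here: the argument is a straightforward concatenation of the two earlier results, and the only point requiring any care is the elementary monotonicity check in the $1/2<\sigma<1$ case to convert the $k$-dependent factor into one depending only on $q$. Choosing $L_2$ large in terms of $L_1$, $B$, and $\beta$ handles both the error term in Proposition \ref{propIM} and the conversion in Lemma \ref{lemIM} simultaneously.
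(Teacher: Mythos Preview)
Your proposal is correct and follows exactly the approach the paper intends: the corollary is stated immediately after Lemma \ref{lemIM} and Proposition \ref{propIM} with the remark ``By Lemma \ref{lemIM} and Proposition \ref{propIM}, we obtain the following corollary,'' and your argument spells out precisely this combination, including the necessary bookkeeping for the error term and the conversion of the $k$-dependent factor in the range $1/2<\sigma<1$.
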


\subsection{Treatment of $g$-functions}\label{sec3.2}
In the study of the value-distributions of Dirichlet $L$-functions, Ihara--Matsumoto \cite{IharaMatsumoto2011b} considered the function
\[
g_z(s,\chi)
=\exp\left( \frac{iz}{2} \log{L}(s,\chi) \right), 
\]
where $\chi$ is a Dirichlet character and $z \in \mathbb{C}$.
Then, we define similar $g$-functions by using $R_Y(s,f)$ and $R_Y(s,\Theta)$.
For $Y<q$ and $z\in\mathbb{C}$, we define 
\[
g_z(s,f;Y)
=\exp\left(\frac{iz}{2} R_Y(s,f)\right) 
\quad\text{and}\quad
g_z(s,\Theta;Y)
=\exp\left(\frac{iz}{2} R_Y(s,\Theta)\right). 
\]
One of the key ideas of Ihara--Matsumoto was approximating $g_z(s,\chi)$ with a certain function $g_z^+(s,\chi;X)$ possessing the infinite series representation
\[
g_z^+(s,\chi;X)
=\sum_{n=1}^{\infty} \frac{\chi(n) d_z(n)}{n^s} e^{-n/X}, 
\]
where $d_z(n)$ is the $z$-th divisor function; see \cite[Proposition 2.2.1]{IharaMatsumoto2011b}.  
However, this approximation of $g_z(s,\chi)$ relies on the completely multiplicativity of the Dirichlet character $\chi(n)$. 
Thus, we study $g_z(s,f;Y)$ and $g_z(s,\Theta;Y)$ in this paper by a method slightly different from \cite{IharaMatsumoto2011b}. 
We use the Taylor series
\begin{align*}
g_z(s,f;Y)
&=\sum_{k \leq N} \frac{1}{k!} \left(\frac{iz}{2}\right)^k R_Y(s,f)^k
+\sum_{k>N} \frac{1}{k!} \left(\frac{iz}{2}\right)^k R_Y(s,f)^k \\
&=g_z^\flat(s,f;Y,N)+ g_z^\#(s,f;Y,N), 
\end{align*}
say. 
Similarly, we obtain $g_z(s,\Theta;Y)=g_z^\flat(s,\Theta;Y,N)+g_z^\#(s,\Theta;Y,N)$ by replacing $f$ with $\Theta$. 
In this subsection, we study the second moments of these functions. 

\begin{lemma}\label{lemG-}
Under the assumptions of Corollary \ref{corIM}, we take a positive integer $N$ as $N=\lfloor \log{q}/(L_2\log\log{q}) \rfloor$. 
Then there exist positive constants $a_3=a_3(\sigma,B)$ and $b_5=b_5(\sigma,B)$ such that
\begin{align*}
\sum_{f \in B_2(q) \setminus \mathcal{E}_1} \omega_q(f) |g_z^\#(s,f;Y,N)|^2
&\ll \exp\left(-b_5 \frac{\log{q}}{\log\log{q}}\right), \\
\mathbb{E}\left[|g_z^\#(s,\Theta;Y,N)|^2\right]
&\ll \exp\left(-b_5 \frac{\log{q}}{\log\log{q}}\right)
\end{align*}
for all $z \in \mathbb{C}$ with $|z| \leq a_3 R_\sigma(q)$, where $R_\sigma(q)$ and $\mathcal{E}_1=\mathcal{E}_1(q,s;Y)$ are as in the statement of Proposition \ref{propCM2}, and the implied constants depend only on $s$ and $B$. 
\end{lemma}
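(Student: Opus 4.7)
The plan is to view $g_z^\#$ as the Taylor tail of an exponential, so that estimating it reduces to controlling $\sum_{k>N}(|z|/2)^k R_Y^k / k!$ with $k!$ beating $|z|^k R_Y^k$. On the deterministic side the defining property of $\mathcal{E}_1$ furnishes a pointwise bound $|R_Y(s,f)|\leq M(q,\sigma)$ for $f\notin\mathcal{E}_1$; on the random side Lemma \ref{lemIM} furnishes moment bounds of essentially the same shape. In both cases Stirling's inequality $k!\geq (k/e)^k$ converts the tail into a geometric series whose first term already achieves the target saving $\exp(-b_5\log q/\log\log q)$, provided the threshold $N=\lfloor\log q/(L_2\log\log q)\rfloor$ and the constant $a_3$ are matched correctly.

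\textbf{Deterministic side.} I would first establish the uniform bound $|R_Y(s,f)|\leq M(q,\sigma)$ for $f\notin\mathcal{E}_1(q,s;Y)$, where $M(q,\sigma)=O_\sigma(1)$ if $\sigma>1$ (from $|b(p^m,f)|\leq 2/m$ and $\sum_{p,m}m^{-1}p^{-m\sigma}=\log\zeta(\sigma)$), $M(q,\sigma)=O(\log\log\log q)$ if $\sigma=1$ (by Mertens, using $Y=(\log q)^B$), and $M(q,\sigma)=(\log q)^{1-\sigma}/\log\log q$ if $1/2<\sigma<1$ (by the definition \eqref{eq271607}). In every case one checks $R_\sigma(q)\,M(q,\sigma)\asymp\log q/\log\log q\asymp L_2 N$, so taking $a_3$ sufficiently small guarantees $|z|M(q,\sigma)/2\leq N/(2e)$ whenever $|z|\leq a_3 R_\sigma(q)$. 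Stirling then yields
$$\frac{1}{k!}\left(\frac{|z|\,M(q,\sigma)}{2}\right)^{k}\leq\left(\frac{e|z|\,M(q,\sigma)}{2k}\right)^{k}\leq 2^{-k}\qquad(k>N),$$
so $|g_z^\#(s,f;Y,N)|\ll 2^{-N}\ll\exp(-b_5\log q/\log\log q)$ uniformly for $f\notin\mathcal{E}_1$. Coupling this with $\sum_f\omega_q(f)\ll 1$ (which follows from \eqref{eq220321} at $n=1$) gives the first claimed bound.

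\textbf{Random side and main obstacle.} For the probabilistic counterpart, I would apply Minkowski's inequality in $L^2$:
$$\mathbb{E}[|g_z^\#(s,\Theta;Y,N)|^2]^{1/2}\leq\sum_{k>N}\frac{1}{k!}\left(\frac{|z|}{2}\right)^{k}\mathbb{E}[|R_Y(s,\Theta)|^{2k}]^{1/2}.$$
For $\sigma\geq 1$, Lemma \ref{lemIM} produces bounds of the form $M(q,\sigma)^k$ and the estimate above carries over verbatim. The delicate case is $1/2<\sigma<1$, in which Lemma \ref{lemIM} only gives a $k$-dependent bound $(K_1 k^{1-\sigma}/(\log 2k)^\sigma)^k$; after applying Stirling the $k$-th summand becomes at most $\bigl(e|z|K_1/(2k^\sigma(\log 2k)^\sigma)\bigr)^k$. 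The main obstacle is to align the $k$-dependent denominator against $R_\sigma(q)=(\log q)^\sigma$: using $k\geq N\asymp\log q/\log\log q$ one checks $k^\sigma(\log 2k)^\sigma\gg(\log q)^\sigma$, and this is precisely what forces the choice of $N$ in the statement. Once the alignment is fixed, shrinking $a_3$ makes the base at most $1/2$ uniformly in $k>N$, geometric summation yields $\mathbb{E}[|g_z^\#|^2]^{1/2}\ll\exp(-b_5\log q/\log\log q)$, and squaring completes the second estimate.
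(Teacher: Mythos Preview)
Your argument is correct and follows the same overall strategy as the paper: control the Taylor tail via Stirling so that the summand is at most $2^{-k}$ for $k>N$, then sum geometrically. The technical packaging differs slightly. On the deterministic side you exploit the pointwise bound $|R_Y(s,f)|\leq M(q,\sigma)$ for $f\notin\mathcal{E}_1$ directly on $|g_z^\#|$, whereas the paper expands $|g_z^\#|^2$ as a double sum over $j,l>N$ and applies Cauchy--Schwarz to the $f$-average; your route is cleaner here since the pointwise bound makes the Cauchy--Schwarz step superfluous. On the random side you use Minkowski's inequality in $L^2$ to reach $\sum_{k>N}(|z|/2)^k\mathbb{E}[|R_Y|^{2k}]^{1/2}/k!$, while the paper arrives at an equivalent sum over $k>2N$ via the same Cauchy--Schwarz expansion; both then invoke Lemma~\ref{lemIM} and the alignment $k^\sigma(\log 2k)^\sigma\gg(\log q)^\sigma$ for $k>N$ in the range $1/2<\sigma<1$, exactly as you describe.
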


\begin{proof}
Applying the Cauchy--Schwarz inequality, we have 
\begin{align}\label{eq082034}
&\sum_{f \in B_2(q) \setminus \mathcal{E}_1} \omega_q(f) |g_z^\#(s,f;Y,N)|^2 \\
&\ll \sum_{j>N} \sum_{l>N} \frac{1}{j!l!}
\left(\frac{|z|}{2}\right)^{j+l}
\Bigg( \sum_{f \in B_2(q) \setminus \mathcal{E}_1} \omega_q(f)
|R_Y(s,f)|^{2(j+l)} \Bigg)^{1/2} \nonumber \\
&\ll \sum_{k>2N} \frac{|z|^k}{k!}
\Bigg( \sum_{f \in B_2(q) \setminus \mathcal{E}_1} \omega_q(f)
|R_Y(s,f)|^{2k} \Bigg)^{1/2} \nonumber
\end{align}
along with the inequality $\sum_{j+l=k} k!/(j!l!) \leq 2^k$. 
Recall that $\sum_{f \in B_2(q)} \omega_q(f)$ is uniformly bounded by \eqref{eq220321}. 
If $\sigma \geq1$, then we know
\begin{gather}\label{eq100143}
|R_Y(s,f)|
\leq \sum_{p \leq Y} \sum_{m=1}^{\infty} \frac{2}{m} p^{-m\sigma}
\ll
\begin{cases}
\log\zeta(\sigma) 
& \text{for $\sigma>1$}, \\
\log\log{Y} 
& \text{for $\sigma=1$}. 
\end{cases}
\end{gather}
Hence we deduce
\begin{gather}\label{eq082042}
|z|^k \Bigg( \sum_{f \in B_2(q) \setminus \mathcal{E}_1} \omega_q(f) |R_Y(s,f)|^{2k} \Bigg)^{1/2}
\ll \left( a_3 K \frac{\log{q}}{\log\log{q}} \right)^k
\end{gather}
for $|z| \leq a_3 R_\sigma(q)$, where $K=K(\sigma,B)$ is a positive constant, and the implied constant depends only on $s$ and $B$. 
Furthermore, \eqref{eq082042} remains valid in the case of $1/2<\sigma<1$ by the definition of $\mathcal{E}_1(q,s;Y)$. 
Then, we use the Stirling formula to derive $k! \gg (N/2)^k$ for $k>2N$. 
By \eqref{eq082042}, we obtain 
\[
\frac{|z|^k}{k!}
\Bigg( \sum_{f \in B_2(q) \setminus \mathcal{E}_1} \omega_q(f) |R_Y(s,f)|^{2k} \Bigg)^{1/2}
\ll 2^{-k}
\]
by choosing $a_3=(10KL_2)^{-1}$. 
Thus we deduce from \eqref{eq082034} that
\[
\sum_{f \in B_2(q) \setminus \mathcal{E}_1} \omega_q(f) |g_z^\#(s,f;Y,N)|^2
\ll \sum_{k>2N} 2^{-k}
\ll \exp\left(-b(\sigma,B) \frac{\log{q}}{\log\log{q}}\right)
\]
with some constant $b(\sigma,B)>0$. 
Next, we show the result for $g_z^\#(s,\Theta;Y,N)$. 
Similarly to \eqref{eq082034}, we obtain 
\[
\mathbb{E}\left[|g_z^\#(s,\Theta;Y,N)|^2\right]
\ll \sum_{k>2N} \frac{|z|^k}{k!}
\mathbb{E}\left[|R_Y(s,\Theta)|^{2k}\right]^{1/2}. 
\]
By Lemma \ref{lemIM}, the upper bounds
\[
|z|^k \mathbb{E}\left[|R_Y(s,\Theta)|^{2k}\right]^{1/2}
\ll
\begin{cases}
\displaystyle{
\left(a_3 K' \frac{\log{q}}{\log\log{q}}\right)^k
} 
& \text{for $\sigma \geq1$}, \\
\displaystyle{
\left(a_3 K' \frac{k (\log{q})^\sigma}{(k\log2{k})^\sigma}\right)^k
}
& \text{for $1/2<\sigma<1$
}
\end{cases}
\]
are valid with some positive constant $K'=K'(\sigma,B)$. 
Thus, for $k>2N$, we deduce
\[
\frac{|z|^k}{k!}
\mathbb{E}\left[|R_Y(s,\Theta)|^{2k}\right]^{1/2}
\ll 2^{-k}
\]
if we take $a_3=(20K'L_2)^{-1}$. 
Therefore we obtain
\[
\mathbb{E}\left[|g_z^\#(s,\Theta;Y,N)|^2\right]
\ll \sum_{k>2N}2^{-k}
\ll \exp\left(-b'\frac{\log{q}}{\log\log{q}}\right)
\]
with some constant $b'(\sigma,B)>0$, which completes the proof.  
\end{proof}

\begin{lemma}\label{lemG+}
Under the assumptions of Corollary \ref{corIM}, we take a positive integer $N$ as $N=\lfloor \log{q}/(L_2\log\log{q}) \rfloor$. 
For any $c>0$, there exists a positive constant $a_4=a_4(\sigma,B,c)$ such that 
\begin{align*}
\sum_{f \in B_2(q)} \omega_q(f) |g_z^\flat(s,f;Y,N)|^2
&\ll \exp\left(c \frac{\log{q}}{\log\log{q}}\right), \\
\mathbb{E}\left[|g_z^\flat(s,\Theta;Y,N)|^2\right]
&\ll \exp\left(c \frac{\log{q}}{\log\log{q}}\right)
\end{align*}
for all $z \in \mathbb{C}$ with $|z| \leq a_4 R_\sigma(q)$, where $R_\sigma(q)$ is defined as in \eqref{eq301946}. 
Here, the implied constants depend only on $s$ and $B$. 
\end{lemma}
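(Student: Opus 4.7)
The plan is to mimic the Cauchy--Schwarz strategy of Lemma \ref{lemG-}, but now to exploit the fact that $g_z^\flat$ is a polynomial in $R_Y$ of degree at most $N$ so that the second moment expands as a \emph{finite} double sum whose inner average over $f$ (or expectation over $\Theta$) factorizes cleanly into the square of a single sum. The moment bounds of Corollary \ref{corIM} and Lemma \ref{lemIM} will then reduce everything to an exponential series in $|z|$, whose size is governed by the definition of $R_\sigma(q)$.

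First I would write
\[
\sum_{f \in B_2(q)} \omega_q(f) |g_z^\flat(s,f;Y,N)|^2
\leq \sum_{j,l \leq N} \frac{(|z|/2)^{j+l}}{j!\, l!}
\sum_{f \in B_2(q)} \omega_q(f) |R_Y(s,f)|^{j+l},
\]
and bound the inner sum over $f$ via Cauchy--Schwarz by $M_{2j}^{1/2} M_{2l}^{1/2}$, where $M_{2k} := \sum_f \omega_q(f) |R_Y(s,f)|^{2k}$. The double sum then factors as
\[
\sum_{f \in B_2(q)} \omega_q(f) |g_z^\flat(s,f;Y,N)|^2
\leq \left(\sum_{k \leq N} \frac{(|z|/2)^k}{k!} M_{2k}^{1/2}\right)^2.
\]
Since $N \leq \log q/(L_2 \log\log q)$, Corollary \ref{corIM} yields $M_{2k}^{1/2} \leq (K_2 \Lambda_\sigma(q))^k$, where $\Lambda_\sigma(q)$ equals $1$, $\log\log\log q$, or $(\log q)^{1-\sigma}/\log\log q$ according to whether $\sigma>1$, $\sigma=1$, or $1/2<\sigma<1$. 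Direct inspection of \eqref{eq301946} shows that the product $|z|\Lambda_\sigma(q)$ is bounded by $a_4 \log q/\log\log q$ uniformly in $\sigma$ whenever $|z| \leq a_4 R_\sigma(q)$. Extending the sum over $k \leq N$ to the full exponential series therefore gives
\[
\sum_{f \in B_2(q)} \omega_q(f) |g_z^\flat(s,f;Y,N)|^2
\leq \exp\!\left(K_2 |z| \Lambda_\sigma(q)\right)
\ll \exp\!\left(a_4 K_2 \frac{\log q}{\log\log q}\right),
\]
and the choice $a_4 = c/K_2$ closes the first bound.

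The estimate for $\mathbb{E}[|g_z^\flat(s,\Theta;Y,N)|^2]$ proceeds by the same expansion with Cauchy--Schwarz in probability, and the substitution of Lemma \ref{lemIM} reproduces the identical bound when $\sigma \geq 1$. For $1/2<\sigma<1$ one has to check that $k^{1-\sigma}/(\log 2k)^\sigma$ is, up to a bounded multiplicative constant, non-decreasing in $k \geq 1$ and hence controlled for $k \leq N$ by its value at $k = N$, which is of order $(\log q)^{1-\sigma}/\log\log q$; this puts the random side on the same footing as the arithmetic side, and the same choice of $a_4$ (possibly shrunk by a constant depending only on $\sigma, B$) delivers the result.

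I do not expect any genuine obstacle here: the Cauchy--Schwarz trick is forced by the structure of $|g_z^\flat|^2$, and both moment inputs are already available. The only point requiring care is the uniformity of $a_4$ across the three regimes of $\sigma$ and across both the arithmetic and probabilistic sides, which is exactly the reason $R_\sigma(q)$ was defined as it is in \eqref{eq301946}.
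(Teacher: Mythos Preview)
Your proof is correct and follows essentially the same strategy as the paper: expand $|g_z^\flat|^2$ as a double sum in $j,l\le N$, control the inner moments of $R_Y$ via Cauchy--Schwarz together with Corollary~\ref{corIM} and Lemma~\ref{lemIM}, and then bound by the exponential series, choosing $a_4=c/K$. The only (cosmetic) difference is that you apply Cauchy--Schwarz as $\sum_f \omega_q(f)|R_Y|^{j+l}\le M_{2j}^{1/2}M_{2l}^{1/2}$ and factor the double sum into a perfect square over $k\le N$, whereas the paper regroups by $k=j+l$ (as in Lemma~\ref{lemG-}) to get a single sum over $k\le 2N$; your variant has the slight advantage of staying strictly within the range $k\le N$ where Corollary~\ref{corIM} is stated.
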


\begin{proof}
Applying Lemma \ref{lemIM} and Corollary \ref{corIM}, we obtain 
\begin{align*}
\sum_{f  \in B_2(q)} \omega_q(f) |g_z^\flat(s,f;Y,N)|^2
&\ll \sum_{k \leq{2N}} \frac{1}{k!}
\left(a_4 K \frac{\log{q}}{\log\log{q}} \right)^k, \\
\mathbb{E}\left[|g_z^\flat(s,\Theta;Y,N)|^2\right]
&\ll \sum_{k \leq{2N}} \frac{1}{k!}
\left(a_4 K \frac{\log{q}}{\log\log{q}} \right)^k
\end{align*}
for $|z| \leq a_4 R_\sigma(q)$, where $K=K(\sigma,B)$ is a positive constant. 
We choose $a_4=c/K$, and the desired results follow directly from the Taylor series of the exponential. 
\end{proof}

\subsection{Calculations of complex moments}\label{sec3.3}
Since we have $\overline{R_Y(s,f)}=R_Y(\overline{s},f)$ and $\overline{R_Y(s,\Theta)}=R_Y(\overline{s},\Theta)$, it follows from the definitions of $g$-functions that
\begin{align*}
\psi_{z,z'}(R_Y(s,f))
&=g_z(\overline{s},f;Y) g_{z'}(s,f;Y), \\
\psi_{z,z'}(R_Y(s,\Theta))
&=g_z(\overline{s},\Theta;Y) g_{z'}(s,\Theta;Y).  
\end{align*}
For the proof of Proposition \ref{propCM2}, we estimate the difference
\begin{align*}
&\widetilde{\mathcal{M}}_{s,q}(z,z',\omega_q;Y)^{\mathcal{E}_1}
-\widetilde{\mathcal{M}}_s(z,z',\Theta;Y)\\
&=\sum_{f \in B_2(q) \setminus \mathcal{E}_1} \omega_q(f) g_z(\overline{s},f;Y) g_{z'}(s,f;Y)
-\mathbb{E}\left[g_z(\overline{s},\Theta;Y) g_{z'}(s,\Theta;Y)\right]. 
\end{align*}
We begin with considering the contributions of $g_z^\flat(s,f;Y,N)$ and $g_z^\flat(s,\Theta;Y,N)$. 

\begin{proposition}\label{propG+}
Let $s=\sigma+it$ be a fixed complex number with $\sigma>1/2$. 
Suppose that $\Theta$ and $\omega_q$ satisfies \eqref{eq220323}, \eqref{eq220321}, and \eqref{eq220322}. 
For $Y=(\log{q})^B$ with $B \geq1$, there exist positive constants $a_5=a_5(\sigma,B)$, $b_6=b_6(\sigma,B)$, and $L_3=L_3(\sigma,B)$ such that 
\begin{align*}
&\sum_{f \in B_2(q) \setminus \mathcal{E}_1} \omega_q(f)
g_z^\flat(\overline{s},f;Y,N) g_{z'}^\flat(s,f;Y,N) \\
&=\mathbb{E}\left[g_z^\flat(\overline{s},\Theta;Y,N) g_{z'}^\flat(s,\Theta;Y,N)\right] 
+O\left( \exp\left(-b_6 \frac{\log{q}}{\log\log{q}}\right) \right)
\end{align*}
for $z, z' \in \mathbb{C}$ with $\max\{|z|,|z'|\} \leq a_5 R_\sigma(q)$, where we take $N=\lfloor \log{q}/(L_3\log\log{q}) \rfloor$. 
Here, $R_\sigma(q)$ and $\mathcal{E}_1=\mathcal{E}_1(q,s;Y)$ are as in the statement of Proposition \ref{propCM2}, and the implied constant depends only on $s$ and $B$. 
\end{proposition}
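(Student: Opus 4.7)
The plan is to expand the two $g^\flat$-functions as Taylor polynomials in $R_Y$, apply Proposition~\ref{propIM} to each monomial moment, and treat the exceptional set $\mathcal{E}_1$ separately through Cauchy--Schwarz. Using $R_Y(\overline{s},f)=\overline{R_Y(s,f)}$,
\[
g_z^\flat(\overline{s},f;Y,N)\, g_{z'}^\flat(s,f;Y,N)
=\sum_{j,l \leq N}\frac{(iz/2)^j(iz'/2)^l}{j!\,l!}\, \overline{R_Y(s,f)}^{j}\, R_Y(s,f)^{l},
\]
with the analogous identity on the $\Theta$-side. Writing $\sum_{f \in B_2(q)\setminus\mathcal{E}_1}=\sum_{f \in B_2(q)}-\sum_{f \in \mathcal{E}_1}$ reduces the task to matching the unrestricted $f$-sum against the $\Theta$-expectation, plus a correction over $\mathcal{E}_1$ which is vacuous for $\sigma \geq 1$.

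For the unrestricted sum, Proposition~\ref{propIM}---which applies as soon as $2N \leq \log q/(L_1\log\log q)$, i.e.\ once $L_3 \geq 2L_1$---produces a total error of
\[
\ll q^{-\beta/2}\sum_{j,l \leq N}\frac{(|z|Y/2)^j(|z'|Y/2)^l}{j!\,l!}.
\]
The naive bound $\exp((|z|+|z'|)Y/2)$ on this tail is catastrophic when $|z|\asymp R_\sigma(q)$ and $Y=(\log q)^B$. Instead, in the relevant regime $N \ll |z|Y$, so Stirling's formula shows that $\sum_{j \leq N}(|z|Y/2)^j/j!$ is dominated by its last term $(|z|Y/2)^N/N!$, which a direct computation estimates as $q^{C(\sigma,B)/L_3+o(1)}$ for an explicit constant $C(\sigma,B)$. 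Choosing $L_3$ sufficiently large in terms of $\beta$, $B$, $\sigma$ then forces the total error to be at most $q^{-\eta}$ for some fixed $\eta>0$, comfortably smaller than $\exp(-b_6 \log q/\log\log q)$.

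For the $\mathcal{E}_1$-correction, relevant only when $1/2<\sigma<1$, Cauchy--Schwarz gives
\[
\Bigl|\sum_{f \in \mathcal{E}_1}\omega_q(f)\, g_z^\flat g_{z'}^\flat\Bigr|
\leq \Bigl(\sum_{f \in \mathcal{E}_1}\omega_q(f)|g_z^\flat|^2\Bigr)^{1/2}\Bigl(\sum_{f \in B_2(q)}\omega_q(f)|g_{z'}^\flat|^2\Bigr)^{1/2},
\]
whose second factor is at most $\exp(c\log q/\log\log q)$ for arbitrarily small $c>0$ by Lemma~\ref{lemG+}. For the first factor I plan to use $|g_z^\flat|^2\leq(N+1)\sum_{j \leq N}(|z|/2)^{2j}|R_Y(s,f)|^{2j}/(j!)^2$ and control each $\sum_{f \in \mathcal{E}_1}\omega_q(f)|R_Y|^{2j}$ by a further Cauchy--Schwarz step, invoking Corollary~\ref{corIM} for the $4j$-th moments of $R_Y$ and Lemma~\ref{lemES1} together with \eqref{eq220322} to get $\sum_{f \in \mathcal{E}_1}\omega_q(f)\ll(\log q)^A \exp(-b_3 \log q/\log\log q)$. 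The residual Bessel-type series $\sum_j u^{2j}/(j!)^2 \leq e^{2u}$ (with $u=|z|K_2(\log q)^{1-\sigma}/(2\log\log q)$) is kept tame by taking $a_5$ sufficiently small relative to $b_3$, after which $c$ is chosen smaller still.

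The main obstacle is the error bookkeeping in the unrestricted-sum step: the $Y^{j+l}$ inflation in Proposition~\ref{propIM} renders the crude exponential bound on the Taylor tail useless, so the argument has to rely on the truncation $j \leq N$ together with Stirling to extract enough savings from the factorials. Calibrating the three constants $a_5$, $b_6$, $L_3$ simultaneously---so that both the Stirling estimate in the unrestricted sum and the Cauchy--Schwarz chain controlling $\mathcal{E}_1$ produce savings of magnitude $\exp(-b_6 \log q/\log\log q)$---is what demands care.
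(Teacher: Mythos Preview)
Your proposal is correct and follows the paper's strategy: expand the two $g^\flat$-polynomials, apply Proposition~\ref{propIM} to the unrestricted $f$-sum, and handle the $\mathcal{E}_1$-correction by Cauchy--Schwarz together with Lemma~\ref{lemES1} and Corollary~\ref{corIM}. The execution differs in two minor respects, in both of which the paper takes a shorter path.

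For the unrestricted error $E_1$, the paper does not use Stirling at all. It simply drops the factorials and bounds the double sum by $q^{-\beta/2}\max\{|z|,|z'|\}^{2N}Y^{2N}$. Since $\max\{|z|,|z'|\}\le a_5\log q$ and $Y=(\log q)^B$, this is at most $q^{-\beta/2}(\log q)^{2(B+1)N}=q^{-\beta/2+2(B+1)/L_3}$, so choosing $L_3$ large enough already gives $E_1\ll q^{-\beta/4}$. Your concern about the ``catastrophic'' bound $\exp((|z|+|z'|)Y/2)$ applies only to the \emph{full} exponential series; once the sum is truncated at $N$, the crude power bound is adequate and the Stirling step is unnecessary.

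For the $\mathcal{E}_1$-correction the paper places Cauchy--Schwarz \emph{inside} the Taylor expansion rather than outside: for each monomial it bounds
\[
\Bigl|\sum_{f\in\mathcal{E}_1}\omega_q(f)\,\overline{R_Y(s,f)}^{j}R_Y(s,f)^{l}\Bigr|
\le\Bigl(\sum_{f\in\mathcal{E}_1}\omega_q(f)\Bigr)^{1/2}
\Bigl(\sum_{f\in B_2(q)}\omega_q(f)\,|R_Y(s,f)|^{2(j+l)}\Bigr)^{1/2},
\]
and then resums over $j,l$ to obtain $\sum_{k\le 2N}\frac{|z|^k}{k!}(\cdot)^{1/2}$. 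This avoids your second Cauchy--Schwarz step and the appeal to $4j$-th moments; only the $2k$-th moments from Corollary~\ref{corIM} with $k\le 2N$ are needed. Your route through Lemma~\ref{lemG+} and a Bessel-type sum also works, but is longer.
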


\begin{proof}
Applying Proposition \ref{propIM}, we have
\[
\sum_{f \in B_2(q)} \omega_q(f)
g_z^\flat(\overline{s},f;Y,N) g_{z'}^\flat(s,f;Y,N) 
=\mathbb{E}\left[g_z^\flat(\overline{s},\Theta;Y,N) g_{z'}^\flat(s,\Theta;Y,N)\right]
+E_1,
\]
where 
\[
E_1
\ll \sum_{j \leq N} \sum_{l \leq N} \frac{1}{j!l!}
\left(\frac{|z|}{2}\right)^j \left(\frac{|z'|}{2}\right)^l
Y^{j+l} q^{-\beta/2}
\ll q^{-\beta/2} \max\{|z|,|z'|\}^{2N} Y^{2N}. 
\]
Let $L_3=L_3(\sigma,B)$ be large enough to keep the bound $\max\{|z|,|z'|\}^{2N} Y^{2N} \ll q^{\beta/4}$. 
Then we obtain $E_1 \ll q^{-\beta/4}$. 
Hence, the remaining work is giving the bound of 
\[
E_2
=\sum_{f \in \mathcal{E}_1} \omega_q(f)
g_z^\flat(\overline{s},f;Y,N) g_{z'}^\flat(s,f;Y,N).  
\]
We just consider the case of $1/2<\sigma<1$, otherwise it is the empty sum. 
Then we see that 
\begin{align*}
E_2
&=\sum_{j \leq N} \sum_{l \leq N} \frac{1}{j!l!}
\left(\frac{iz}{2}\right)^j \left(\frac{iz'}{2}\right)^l
\sum_{f \in \mathcal{E}_1} \omega_q(f) R_Y(\overline{s},f)^j R_Y(s,f)^l \\
&\ll \sum_{k \leq 2N} \frac{|z|^k}{k!}
\Bigg(\sum_{f \in \mathcal{E}_1} \omega_q(f) \Bigg)^{1/2}
\Bigg(\sum_{f \in B_2(q)} \omega_q(f) |R_Y(s,f)|^{2k}\Bigg)^{1/2}
\end{align*}
by the Cauchy--Schwarz inequality. 
Note that Lemma \ref{lemES1} yields
\[
\sum_{f \in \mathcal{E}_1} \omega_q(f)
\ll \exp\left(-\frac{b_3}{2} \frac{\log{q}}{\log\log{q}}\right)
\]
by condition \eqref{eq220322}. 
Furthermore, we deduce from Corollary \ref{corIM} the bound  
\[
\sum_{k \leq 2N} \frac{|z|^k}{k!}
\Bigg( \sum_{f \in B_2(q)} \omega_q(f) |R_Y(s,f)|^{2k} \Bigg)^{1/2}
\ll\exp\left( \frac{b_3}{4} \frac{\log{q}}{\log\log{q}} \right)
\]
by choosing $a_5=a_5(\sigma,B)$ sufficiently small, along the same line as the proof of Lemma \ref{lemG+}. 
From the above, we obtain
\begin{align*}
&\sum_{f \in B_2(q) \setminus \mathcal{E}_1} \omega_q(f)
g_z^\flat(\overline{s},f;Y,N) g_{z'}^\flat(s,f;Y,N) 
-\mathbb{E}\left[g_z^\flat(\overline{s},\Theta;Y,N) g_{z'}^\flat(s,\Theta;Y,N)\right]\\
&\qquad
\ll q^{-\beta/4}+ \exp\left(-\frac{b_3}{4} \frac{\log{q}}{\log\log{q}}\right)
\ll \exp\left(-\frac{b_3}{4} \frac{\log{q}}{\log\log{q}}\right)
\end{align*}
as desired. 
\end{proof}

\begin{proof}[Proof of Proposition \ref{propCM2}]
Let $a_2=\min\{a_3, a_4, 1\}$ with the constants $a_3=a_3(\sigma,B)$ and $a_4=a_4(\sigma,B,c)$ of Lemmas \ref{lemG-} and \ref{lemG+}, respectively. 
First, we use the Cauchy--Schwarz inequality to obtain 
\begin{align*}
&\sum_{f \in B_2(q) \setminus \mathcal{E}_1} \omega_q(f)
g_z(\overline{s},f;Y) g_{z'}(s,f;Y) 
-\sum_{f \in B_2(q) \setminus \mathcal{E}_1} \omega_q(f)
g_z^\flat(\overline{s},f;Y,N) g_{z'}^\flat(s,f;Y,N) \\
&\ll \Bigg(\sum_{f \in B_2(q)} \omega_q(f)
|g_z^\flat(\overline{s},f;Y,N)|^2\Bigg)^{1/2}
\Bigg(\sum_{f \in B_2(q) \setminus \mathcal{E}_1} \omega_q(f)
|g_{z'}^\#(s,f;Y,N)|^2\Bigg)^{1/2} \\
&+\Bigg(\sum_{f \in B_2(q) \setminus \mathcal{E}_1} \omega_q(f)
|g_z^\#(\overline{s},f;Y,N)|^2\Bigg)^{1/2}
\Bigg(\sum_{f \in B_2(q)} \omega_q(f)
|g_{z'}^\flat(s,f;Y,N)|^2\Bigg)^{1/2} \\
&+\Bigg(\sum_{f \in B_2(q) \setminus \mathcal{E}_1} \omega_q(f)
|g_z^\#(\overline{s},f;Y,N)|^2\Bigg)^{1/2}
\Bigg(\sum_{f \in B_2(q) \setminus \mathcal{E}_1} \omega_q(f)
|g_{z'}^\#(s,f;Y,N)|^2\Bigg)^{1/2}. 
\end{align*}
By Lemmas \ref{lemG-} and \ref{lemG+} with $c=b_5/2$, this is estimated as
\[
\ll \exp\left(-\frac{b_5}{8} \frac{\log{q}}{\log\log{q}}\right),
\]
where the implied constant depends only on $s$ and $B$. 
Furthermore, by a similar argument, we have
\begin{align*}
&\mathbb{E}\left[g_z(\overline{s},\Theta;Y) g_{z'}(s,\Theta;Y)\right]\\
&=\mathbb{E}\left[g_z^\flat(\overline{s},\Theta;Y,N) g_{z'}^\flat(s,\Theta;Y,N)\right]
+O\left(\exp\left(-\frac{b_5}{8} \frac{\log{q}}{\log\log{q}}\right)\right). 
\end{align*}
Hence the result is deduced from Proposition \ref{propG+}. 
\end{proof}

\begin{proof}[Proof of Proposition \ref{propCM1}]
Let $B_1=\max\left\{(2B+6)(\sigma-\frac{1}{2})^{-1}, 1\right\}$. 
Then we put 
\[
\mathcal{E}(q,s)
=\mathcal{E}_1 \cup \mathcal{E}_2
\]
for $Y=(\log{q})^{B_1}$, where $\mathcal{E}_1=\mathcal{E}_1(q,s;Y)$ and $\mathcal{E}_2=\mathcal{E}_2(q,s;Y)$ are the subsets of \eqref{eq271607} and \eqref{eq282331}, respectively. 
Note that condition \eqref{eq302036} is satisfied by Lemmas \ref{lemES1} and \ref{lemES2}. 
If $f \notin \mathcal{E}_2$, and the inequality $\max\{|z|, |z'|\} \leq \log{q}$ holds, then we have 
\[
\psi_{z,z'}(\log{L}(s,f)-R_Y(s,f))
=1+O\left((\log{q})^{-B}\right)
\]
by the definition of $\mathcal{E}_2$. 
The left-hand side of \eqref{eq302035} is calculated as
\begin{align}\label{eq011616}
&\widetilde{\mathcal{M}}_{s,q}(z,z',\omega_q)^{\mathcal{E}} \\
&=\widetilde{\mathcal{M}}_{s,q}(z,z',\omega_q;Y)^{\mathcal{E}}
+O\Bigg(\frac{1}{(\log{q})^B}
\sum_{f \in B_2(q) \setminus \mathcal{E}} \omega_q(f)
\left|\psi_{z,z'}(R_Y(s,f))\right|\Bigg). \nonumber
\end{align}
In order to get the main term, we evaluate the difference
\[
E_1
=\widetilde{\mathcal{M}}_{s,q}(z,z',\omega_q;Y)^{\mathcal{E}_1}
-\widetilde{\mathcal{M}}_{s,q}(z,z',\omega_q;Y)^{\mathcal{E}}
=\sum_{f \in \mathcal{E} \setminus \mathcal{E}_1} \omega_q(f) \psi_{z,z'}(R_Y(s,f)). 
\]
If $1/2<\sigma<1$, then it holds by the definition of $\mathcal{E}_1$ that
\[
|\psi_{z,z'}(R_Y(s,f))|
\leq \exp\left( a_1 R_\sigma(q) |R_Y(s,f)| \right)
\leq \exp\left( a_1 \frac{\log{q}}{\log\log{q}} \right)
\]
for $f \notin \mathcal{E}_1$ and $\max\{|z|, |z'|\} \leq a_1 R_\sigma(q)$. 
If $\sigma \geq1$, then bounds \eqref{eq100143} are available for all $f \in B_2(q)$. 
Hence we obtain 
\[
|\psi_{z,z'}(R_Y(s,f))|
\leq \exp\left( a_1 K \frac{\log{q}}{\log\log{q}} \right)
\]
for $\sigma \geq1$ with a constant $K=K(\sigma, B) \geq1$. 
Taking $a_1$ with $a_1 \leq b_1/(4K)$, we obtain
\[
E_1
\ll \exp\left(\frac{b_1}{4} \frac{\log{q}}{\log\log{q}}\right)
\sum_{f \in \mathcal{E}} \omega_q(f)
\ll \exp\left(-\frac{b_1}{4} \frac{\log{q}}{\log\log{q}}\right) 
\]
as a consequence of \eqref{eq220322} and \eqref{eq302036}. 
Thus the formula 
\begin{gather}\label{eq100155}
\widetilde{\mathcal{M}}_{s,q}(z,z',\omega_q;Y)^{\mathcal{E}} 
=\widetilde{\mathcal{M}}_{s,q}(z,z',\omega_q;Y)^{\mathcal{E}_1}
+O\left(\exp\left(-\frac{b_1}{4} \frac{\log{q}}{\log\log{q}}\right)\right) 
\end{gather}
follows. 
Next, applying Proposition \ref{propCM2} and Lemma \ref{lemM}, we have 
\begin{gather}\label{eq100156}
\widetilde{\mathcal{M}}_{s,q}(z,z',\omega_q;Y)^{\mathcal{E}_1}
=\widetilde{\mathcal{M}}_s(z,z',\Theta)
+E_2, 
\end{gather}
where the error term is estimated as
\begin{align*}
E_2
&\ll \exp\left(-b_2 \frac{\log{q}}{\log\log{q}}\right)
+(1+E_s(z,z',\Theta)) (\log{q})^{-B-2} \\
&\ll \frac{1+E_s(z,z',\Theta)}{(\log{q})^B}. 
\end{align*}
Finally, we evaluate the error term in the right-hand side of \eqref{eq011616}. 
Let $(\xi,\xi')$ be the pair of \eqref{eq092258}. 
Then we see that
\[
\sum_{f \in B_2(q) \setminus \mathcal{E}} \omega_q(f)
\left|\psi_{z,z'}(R_Y(s,f))\right|
\leq \sum_{f \in B_2(q) \setminus \mathcal{E}_1} \omega_q(f)
\psi_{\xi,\xi'}(R_Y(s,f)) 
\]
since $|\psi_{z,z'}(w)|=\psi_{\xi,\xi'}(w)$ and $\mathcal{E} \supset \mathcal{E}_1$. 
Applying Proposition \ref{propCM2} and Lemma \ref{lemM} again, we obtain
\begin{align}\label{eq100219}
\sum_{f \in B_2(q) \setminus \mathcal{E}} \omega_q(f)
\left|\psi_{z,z'}(R_Y(s,f))\right|
&\leq \widetilde{\mathcal{M}}_s(\xi,\xi',\Theta)
+O\left(\frac{1+E_s(\xi,\xi',\Theta)}{(\log{q})^B}\right)\\
&\ll 1+{E}_s(z,z',\Theta). \nonumber
\end{align}
Then the desired result follows by formulas \eqref{eq011616}, \eqref{eq100155}, \eqref{eq100156}, and \eqref{eq100219}. 
\end{proof}

We obtain the following corollary, which is used in the proof of Theorem \ref{thmMV}. 

\begin{corollary}\label{corUB}
Under the assumptions of Proposition \ref{propCM1}, we obtain
\begin{align*}
\sum_{f \in B_2(q) \setminus \mathcal{E}} \omega_q(f)
\exp\left(a |\RE\log{L}(s,f)|\right)
&\ll 1, \\
\sum_{f \in B_2(q) \setminus \mathcal{E}} \omega_q(f)
\exp\left(a |\IM\log{L}(s,f)|\right) 
&\ll 1 
\end{align*} 
for any fixed real number $a \geq0$. 
\end{corollary}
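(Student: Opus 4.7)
The plan is to derive Corollary \ref{corUB} as a direct specialization of Proposition \ref{propCM1} by choosing $(z,z')$ so that $\psi_{z,z'}(\log L(s,f))$ collapses to a real exponential of $\RE \log L(s,f)$ or $\IM \log L(s,f)$. From the definition $\psi_{z,z'}(w)=\exp(\tfrac{i}{2}(z\overline{w}+z'w))$ one computes directly
\[
\psi_{-ia,-ia}(w)=\exp(a\RE w),\quad \psi_{ia,ia}(w)=\exp(-a\RE w),
\]
\[
\psi_{a,-a}(w)=\exp(a\IM w),\quad \psi_{-a,a}(w)=\exp(-a\IM w).
\]
All four choices fall into the cases $z'=\pm z$ covered by Proposition \ref{propCM1}, and since $a$ is fixed while $R_\sigma(q)\to\infty$, the constraint $|z|\leq a_1 R_\sigma(q)$ is automatically satisfied for all sufficiently large $q$.

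Next I will apply Proposition \ref{propCM1} at each of the four pairs above. For instance, at $(z,z')=(-ia,-ia)$ the proposition yields
\[
\sum_{f\in B_2(q)\setminus\mathcal{E}}\omega_q(f)\exp(a\RE\log L(s,f))
=\widetilde{\mathcal{M}}_s(-ia,-ia,\Theta)+O\left(\frac{1+E_s(-ia,-ia,\Theta)}{(\log q)^B}\right).
\]
Here $\widetilde{\mathcal{M}}_s(-ia,-ia,\Theta)=\mathbb{E}[\exp(a\RE\log L(s,\Theta))]$ and $E_s(-ia,-ia,\Theta)$ equals the same expected value, both of which are finite by Lemma \ref{lemMGF}. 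Hence the right-hand side is $O(1)$, uniformly in $q$. The three analogous applications at $(ia,ia)$, $(a,-a)$ and $(-a,a)$ give the corresponding bounds for $\exp(-a\RE\log L(s,f))$, $\exp(a\IM\log L(s,f))$, and $\exp(-a\IM\log L(s,f))$.

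Finally I will combine the pairs of bounds using the elementary inequality $\exp(a|x|)\leq \exp(ax)+\exp(-ax)$ applied to $x=\RE\log L(s,f)$ and then to $x=\IM\log L(s,f)$. Summing the two estimates in each case gives
\[
\sum_{f\in B_2(q)\setminus\mathcal{E}}\omega_q(f)\exp(a|\RE\log L(s,f)|)\ll 1,
\]
\[
\sum_{f\in B_2(q)\setminus\mathcal{E}}\omega_q(f)\exp(a|\IM\log L(s,f)|)\ll 1,
\]
which is the claim.

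There is no real obstacle here; the corollary is essentially a dictionary translation between the quasi-character $\psi_{z,z'}$ at imaginary/real parameters and real exponentials of $\RE w$, $\IM w$. The only minor care needed is to verify that the value of $E_s(z,z',\Theta)$ appearing in the error term of Proposition \ref{propCM1} is finite for the four chosen pairs, which is precisely what Lemma \ref{lemMGF} guarantees.
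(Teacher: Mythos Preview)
Your proof is correct and follows essentially the same approach as the paper: specialize Proposition~\ref{propCM1} at the pairs $(z,z')=(\pm ib,\pm ib)$ and $(z,z')=(\pm b,\mp b)$ so that $\psi_{z,z'}(\log L(s,f))$ becomes $\exp(\pm b\RE\log L(s,f))$ or $\exp(\pm b\IM\log L(s,f))$, invoke Lemma~\ref{lemMGF} to bound both the main term and $E_s(z,z',\Theta)$, and finish with $e^{a|x|}\le e^{ax}+e^{-ax}$. The paper writes out only the real-part case explicitly and declares the imaginary-part case similar, while you list all four specializations, but there is no substantive difference.
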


\begin{proof}
We prove just the first estimate since the second one is proved in a similar way. 
Let $b \in \mathbb{R}$. 
Using Proposition \ref{propCM1} with $z=z'=-ib$, we derive
\begin{align*}
&\sum_{f\in{B}_2(q)\setminus\mathcal{E}}\omega_q(f)
\exp\left(b \RE \log{L}(s,f)\right) \\
&=\mathbb{E}\left[ \exp\left(b \RE \log{L}(s,\Theta)\right) \right]
+O\left(\frac{1+ \mathbb{E}\left[ \exp\left(b \RE \log{L}(s,\Theta)\right) \right]}{(\log{q})^B}\right) \\
&\ll \mathbb{E}\left[ \exp\left(|b| |\RE \log{L}(s,\Theta)\right)| \right]. 
\end{align*}
Therefore, we deduce from Lemma \ref{lemMGF} that
\[
\sum_{f\in{B}_2(q)\setminus\mathcal{E}}\omega_q(f)
\exp\left(b \RE \log{L}(s,f)\right)
\ll 1. 
\]
Hence we obtain the result since $e^{a|x|} \leq e^{ax}+e^{-ax}$ holds. 
\end{proof}

\section{Completion of the proofs}\label{sec4}

\subsection{Proof of Theorem \ref{thmPDF}}\label{sec4.1}
Let $\Theta=(\Theta_p)$ be a sequence of $[0,\pi]$-valued random variables such that condition \eqref{eq220319} holds. 
For $s=\sigma+it \notin \mathbb{R}$, we define 
\begin{gather}\label{eq111343}
\mu_s(A;\Theta)
=\mathbb{P}(\log{L}(s,\Theta) \in A), 
\quad 
A \in \mathcal{B}(\mathbb{C}). 
\end{gather}
On the other hand, we see that the random variable $L(s,\Theta)$ is real-valued in the case $s=\sigma \in \mathbb{R}$. 
Thus, for $s=\sigma \in \mathbb{R}$, we also define 
\begin{gather}\label{eq111345}
\mu_\sigma(A;\Theta)
=\mathbb{P}(\log{L}(\sigma,\Theta) \in A), 
\quad 
A \in \mathcal{B}(\mathbb{R}). 
\end{gather}
Then $\mu_s$ and $\mu_\sigma$ are probability measures on $(\mathbb{C}, \mathcal{B}(\mathbb{C}))$ and $(\mathbb{R}, \mathcal{B}(\mathbb{R}))$, respectively. 
We prove that they have continuous density functions by L\'{e}vy's inversion formula. 

\begin{lemma}[Jessen--Wintner \cite{JessenWintner1935}, Section 3]\label{lemLevy}
Let $\mu$ be a probability measure on $(\mathbb{R}^d, \mathcal{B}(\mathbb{R}^d))$ with the characteristic function  
\[
\Lambda(y;\mu)
=\int_{\mathbb{R}^d} e^{i \langle x, y \rangle} \,d \mu(x),  
\]
where $\langle x, y \rangle$ is the standard inner product on $\mathbb{R}^d$. 
For an integer $n \geq0$, we suppose 
\begin{gather}\label{eq102200}
\Lambda(y;\mu)
=O\left( |y|^{-(d+n+\eta)} \right)
\end{gather}
with some constant $\eta>0$. 
Then $\mu$ is absolutely continuous, and the Radon--Nikod\'{y}m derivative is determined by
\[
D_\mu(x)
=\frac{1}{(2\pi)^d} \int_{\mathbb{R}^d}
\Lambda(y;\mu) e^{-i\langle x, y \rangle} \,dy,  
\]
which is a non-negative $C^n$-function on $\mathbb{R}^d$. 
\end{lemma}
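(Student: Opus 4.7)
The plan is to exploit the decay hypothesis to put everything inside $L^1(\mathbb{R}^d)$, and then run a standard Fourier inversion argument with a Gaussian mollifier.

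First I would note that the bound $\Lambda(y;\mu) = O(|y|^{-(d+n+\eta)})$ for $|y|$ large, combined with the trivial bound $|\Lambda(y;\mu)| \le 1$ for $|y|$ small, implies that $y \mapsto (1+|y|^n)\Lambda(y;\mu)$ is absolutely integrable over $\mathbb{R}^d$. Therefore $D_\mu(x)$ as defined by the stated integral exists, and for any multi-index $\alpha$ with $|\alpha| \le n$ the formal derivative
\[
\partial^\alpha D_\mu(x) = \frac{1}{(2\pi)^d}\int_{\mathbb{R}^d} \Lambda(y;\mu) (-iy)^\alpha e^{-i\langle x,y\rangle}\,dy
\]
is dominated by the $L^1$ function $|y|^{|\alpha|}|\Lambda(y;\mu)|$. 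By dominated convergence this yields $D_\mu \in C^n(\mathbb{R}^d)$.

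Next, to identify $D_\mu$ as the density of $\mu$, I would introduce the Gaussian mollifier with Fourier transform $\phi_\varepsilon(y) = e^{-\varepsilon|y|^2/2}$ and form $\mu_\varepsilon = \mu * \gamma_\varepsilon$, where $\gamma_\varepsilon$ is the associated Gaussian probability density. A direct Fubini calculation, valid because $\phi_\varepsilon \Lambda(\cdot;\mu) \in L^1$, gives
\[
D_{\mu_\varepsilon}(x) = \frac{1}{(2\pi)^d}\int_{\mathbb{R}^d} \Lambda(y;\mu)\phi_\varepsilon(y) e^{-i\langle x,y\rangle}\,dy.
\]
As $\varepsilon \to 0^+$, dominated convergence with dominant $|\Lambda(y;\mu)| \in L^1$ shows $D_{\mu_\varepsilon}(x) \to D_\mu(x)$ pointwise (and uniformly on compacta). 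For any bounded continuous test function $g$ on $\mathbb{R}^d$, we have on one hand $\int g\,d\mu_\varepsilon \to \int g\,d\mu$ because $\mu_\varepsilon \to \mu$ weakly, and on the other $\int g(x) D_{\mu_\varepsilon}(x)\,dx \to \int g(x) D_\mu(x)\,dx$ because $D_{\mu_\varepsilon}$ is dominated by the constant $(2\pi)^{-d}\|\Lambda(\cdot;\mu)\|_{L^1}$ on any compact support of $g$ (after a standard tail estimate using that $\mu_\varepsilon$ has uniformly bounded total mass). Equating the two limits identifies $D_\mu$ as the Radon--Nikod\'ym derivative of $\mu$, and in particular $\mu$ is absolutely continuous. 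Non-negativity is automatic: $D_\mu \ge 0$ almost everywhere because it equals a genuine density, and continuity upgrades this to $D_\mu(x) \ge 0$ for every $x$.

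There is no genuine obstacle here; the only moving part is the justification of the limit $\varepsilon \to 0$. The point is that the hypothesis $\Lambda(y;\mu) = O(|y|^{-(d+n+\eta)})$ is strong enough to force unconditional $L^1$ integrability of the Fourier transform, so that one never has to appeal to the oscillatory, distributional form of Fourier inversion. Everything reduces to dominated convergence, applied twice: once to produce the candidate density $D_\mu$ and its derivatives, and once to identify it as the density of $\mu$ via mollification.
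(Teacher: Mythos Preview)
The paper does not supply a proof of this lemma at all; it is stated with attribution to Jessen--Wintner \cite{JessenWintner1935} and used as a black box in the proof of Theorem~\ref{thmPDF}. Your argument is the standard one and is essentially correct: the decay hypothesis forces $(1+|y|^n)\Lambda(y;\mu)\in L^1(\mathbb{R}^d)$, which gives existence and $C^n$-regularity of $D_\mu$ by differentiation under the integral sign, and the Gaussian mollification identifies $D_\mu$ as the density.

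One small tightening: in step (6) you test against bounded continuous $g$ and then invoke a tail estimate. It is cleaner to test only against $g\in C_c(\mathbb{R}^d)$, which already suffices to identify the measure. For such $g$ the right-hand limit $\int g D_{\mu_\varepsilon}\to\int g D_\mu$ follows immediately from the uniform convergence $D_{\mu_\varepsilon}\to D_\mu$ on $\operatorname{supp}(g)$, with no tightness argument needed. This is a cosmetic point; the substance of your proof is fine.
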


\begin{proof}[Proof of Theorem \ref{thmPDF}]
For the proof of the statement $(\mathrm{i})$, we identify $\mathbb{C}$ with $\mathbb{R}^2$ by the map $\iota: x+iy \mapsto (x,y)$. 
Let $s=\sigma+it$ be a fixed complex number with $\sigma>1/2$ and $t \neq0$. 
Then we have $\Lambda(z; \mu_s)=\widetilde{\mathcal{M}}_s(z,\overline{z},\Theta)$ for all $z \in \mathbb{C}$ by definition. 
Hence Proposition \ref{propUB} $(\mathrm{i})$ yields that condition \eqref{eq102200} holds for any $n \geq0$. 
Then we put 
\[
\mathcal{M}_s(w,\Theta)
=2 \pi D_{\mu_s}(u,v) 
\]
for $w=u+iv$, which satisfies the equality
\[
\mathbb{P}(\log{L}(s,\Theta) \in A)
=\int_{\iota(A)} D_{\mu_s}(u,v) \,dudv
=\int_{A} \mathcal{M}_s(w,\Theta) \,|dw|
\]
as desired. 
In the case $s=\sigma>1/2$, we use Proposition \ref{propUB} $(\mathrm{ii})$ to deduce \eqref{eq102200} with $d=1$. 
Then we obtain the desired result by putting $\mathcal{M}_\sigma(u,\Theta)=\sqrt{2\pi} D_{\mu_\sigma}(u)$. 
\end{proof}

Theorem \ref{thmPDF} contains further information on $\mathcal{M}_s(\,\cdot\,,\Theta)$. 
In fact, it is represented as an infinite convolution of Schwartz distributions as follows. 
Let $s=\sigma+it$ be a fixed complex number with $\sigma>1/2$ and $t \neq0$. 
For every prime number $p$, we define a Schwartz distribution $\mathcal{M}_{s,p}(w,\Theta)$ on $\mathbb{C}$ as 
\[
\int_{\mathbb{C}} \Phi(w) \mathcal{M}_{s,p}(w,\Theta) \,|dw|
=\mathbb{E}\left[ \Phi\left( -\log(1- 2(\cos \Theta_p)p^{-s}+ p^{-2s}) \right) \right]. 
\]
Denote by $p_n$ the $n$-th prime number, and put $N=\pi(Y)$ with a real number $Y \geq2$. 
By the independence of $\Theta=(\Theta_p)$, we obtain
\[
\int_{\mathbb{C}} \Phi(w) \left( \mathcal{M}_{s,p_1}(w,\Theta) * \cdots
* \mathcal{M}_{s,p_N}(w,\Theta) \right) \,|dw| 
=\mathbb{E}\left[\Phi\left( \log{L}_Y(s,\Theta) \right)\right], 
\]
where $L_Y(s,\Theta)$ is the partial Euler product of \eqref{eq110122}. 
Recall that $L_Y(s,\Theta)$ converges to $L(s,\Theta)$ as $Y \to\infty$ in law. 
Hence, Theorem \ref{thmPDF} $(\mathrm{i})$ implies the formula
\begin{gather}\label{eq111725}
\mathcal{M}_s(w,\Theta)
=\lim_{N \to\infty}
\mathcal{M}_{s,p_1}(w,\Theta) * \cdots * \mathcal{M}_{s,p_N}(w,\Theta). 
\end{gather}
Moreover, this is valid in the case $s=\sigma>1/2$ if we define $\mathcal{M}_{\sigma,p}(u,\Theta)$ as a Schwartz distribution on $\mathbb{R}$ such that 
\[
\int_{\mathbb{R}} \Phi(u) \mathcal{M}_{\sigma,p}(u,\Theta) \,|du|
=\mathbb{E}\left[\Phi\left( -\log(1-2(\cos \Theta_p) p^{-\sigma}+ p^{-2\sigma}) \right)\right]. 
\]
Finally, we consider the relation between $\mathcal{M}_{s,p}(w,\Theta)$ and $\mathcal{M}_{\sigma,p}(u,\Theta)$ in terms of Schwartz distributions. 
Note that $L(\sigma+it, \Theta) \to {L}(\sigma,\Theta)$ as $t \to0$ in law. 
Thus it is deduced from Theorem \ref{thmPDF} that
\[
\lim_{t \to0} \int_{\mathbb{C}} \Phi(w) \mathcal{M}_{\sigma+it}(w,\Theta) \,|dw|
=\int_{\mathbb{R}} \Phi(u) \mathcal{M}_{\sigma}(u,\Theta) \,|du|.  
\]
Therefore, we conclude that
\begin{gather}\label{eq300000}
\lim_{t \to0} \mathcal{M}_{\sigma+it}(u+iv,\Theta)
=\mathcal{M}_{\sigma}(u,\Theta) \delta(v), 
\end{gather}
where $\delta(v)$ is the Dirac delta distribution normalized as 
\[
\int_{\mathbb{R}} \Phi(v) \delta(v) \,|dv|
=\Phi(0). 
\]

\begin{corollary}\label{corSupp}
Let $\Theta=(\Theta_p)$ be as in Theorem \ref{thmPDF}. 
If $s=\sigma+it$ is a fixed complex number with $\sigma>1$, then the function $\mathcal{M}_s(\,\cdot\,,\Theta)$ is compactly supported on $\mathbb{C}$ for $s \notin \mathbb{R}$, and it is compactly supported on $\mathbb{R}$ for $s \in \mathbb{R}$. 
\end{corollary}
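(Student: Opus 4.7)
The plan is to exploit the absolute convergence of the Dirichlet series defining $\log L(s,\Theta)$ when $\sigma > 1$, which will yield a deterministic (non-random) uniform bound on $|\log L(s,\Theta)|$. From the Euler product \eqref{eq220316} together with the definition of $\log L(s,f)$ via the principal branch (cf.\ \eqref{eq052248}), we have the series representation
\[
\log L(s,\Theta)
=\sum_{p} \sum_{m=1}^{\infty} \frac{2\cos(m\Theta_p)}{m\,p^{ms}}
\]
for $\sigma>1$, almost surely. Using the pointwise bound $|2\cos(m\Theta_p)/m|\leq 2/m$, which holds \emph{deterministically} in $\Theta$, we immediately obtain
\[
|\log L(s,\Theta)|
\leq 2\sum_{p}\sum_{m=1}^{\infty}\frac{1}{m\,p^{m\sigma}}
=2\log\zeta(\sigma)
\]
almost surely, with the same bound valid for any fixed $s=\sigma+it$ with $\sigma>1$ (in particular, for $s=\sigma\in\mathbb{R}$).

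Let $K_\sigma=2\log\zeta(\sigma)$. The bound above shows that the probability measure $\mu_s$ defined by \eqref{eq111343} (respectively $\mu_\sigma$ defined by \eqref{eq111345}) is concentrated on the compact set $\{w\in\mathbb{C}\mid|w|\leq K_\sigma\}$ (respectively on $[-K_\sigma,K_\sigma]\subset\mathbb{R}$). Let $w_0$ be any point with $|w_0|>K_\sigma$, and choose an open ball (or interval) $B$ centered at $w_0$ that is disjoint from this support set. Then
\[
\int_{B}\mathcal{M}_s(w,\Theta)\,|dw|
=\mu_s(B)
=0.
\]
Since $\mathcal{M}_s(\,\cdot\,,\Theta)$ is continuous and non-negative by Theorem \ref{thmPDF}, it must vanish identically on $B$, and hence at $w_0$. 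Thus $\mathcal{M}_s(\,\cdot\,,\Theta)$ is supported in the closed ball of radius $K_\sigma$ (respectively in $[-K_\sigma,K_\sigma]$), which completes the proof.

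There is no real obstacle here: the argument is a direct consequence of absolute convergence combined with the continuity of the density established in Theorem \ref{thmPDF}. The only small point requiring care is ensuring that the branch of $\log L(s,\Theta)$ under consideration indeed coincides with the absolutely convergent series above; this is guaranteed because for $\sigma>1$ the local principal-branch logarithms $-\log(1-2(\cos\Theta_p)p^{-s}+p^{-2s})$ expand as the inner sum over $m$, and the double series converges absolutely, so the representation is unambiguous.
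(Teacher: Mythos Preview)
Your proof is correct. The paper's argument differs in that it invokes the infinite-convolution representation \eqref{eq111725} together with Jessen--Wintner's result that
\[
\supp \mathcal{M}_s(\,\cdot\,,\Theta)
=\lim_{n \to\infty} \bigl(\supp\mathcal{M}_{s,p_1}(\,\cdot\,,\Theta)
+\cdots+\supp \mathcal{M}_{s,p_n}(\,\cdot\,,\Theta)\bigr),
\]
and then bounds each local support by $|{-}\log(1-2(\cos\theta)p^{-s}+p^{-2s})|$, summing over $p$ to reach the same bound $2\log\zeta(\sigma)$. Your route is more elementary: you bypass the convolution machinery entirely by bounding $|\log L(s,\Theta)|$ directly from the absolutely convergent series, and then pass from ``$\mu_s$ is supported in the ball'' to ``$\mathcal{M}_s$ vanishes outside'' using only the continuity and non-negativity from Theorem~\ref{thmPDF}. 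The paper's approach in principle yields an exact description of the support (as a limit of Minkowski sums), but since only the compactness is claimed here, your argument gives everything needed with less overhead.
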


\begin{proof}
In general, we denote by $\lim_{n \to\infty} (A_1+\cdots+A_n)$ the set of points which may be represented in at least one way as the limit of $a_1+\cdots+a_n$, where $a_j$ belongs to $A_j$ for each $j$. 
Then we deduce from \eqref{eq111725} that
\[
\supp \mathcal{M}_s(\,\cdot\,,\Theta)
=\lim_{n \to\infty} \left(\supp\mathcal{M}_{s,p_1}(\,\cdot\,,\Theta)
+\cdots+\supp \mathcal{M}_{s,p_n}(\,\cdot\,,\Theta)\right) 
\]
by \cite[Theorem 3]{JessenWintner1935}. 
The support of $\mathcal{M}_{s,p}(\,\cdot\,,\Theta)$ is determined by
\[
\supp \mathcal{M}_{s,p}(\,\cdot\,,\Theta)
=\left\{ -\log(1- 2(\cos \theta) p^{-s}+ p^{-2s}) ~\middle|~ 0 \leq \theta \leq 2\pi \right\}. 
\]
Hence we have $|z| \leq 2\log\zeta(\sigma)$ if $z$ belongs to the support of $\mathcal{M}_{s,p}(\,\cdot\,,\Theta)$. 
\end{proof}

\subsection{Proof of Theorem \ref{thmMV}}\label{sec4.2}
We define
\begin{gather}\label{eq111344}
\mu_{s,q}(A; \omega_q)
=\frac{1}{N(q,s)}
\sum_{f \in B'_2(q,s)} \omega_q(f) 1_A(\log{L}(s,f)), 
\quad 
A \in \mathcal{B}(\mathbb{C})
\end{gather}
for $s=\sigma+it \notin \mathbb{R}$, and 
\[
\mu_{\sigma,q}(A; \omega_q)
=\frac{1}{N(q,\sigma)}
\sum_{f \in B'_2(q,\sigma)} \omega_q(f) 1_A(\log{L}(\sigma,f)), 
\quad 
A \in \mathcal{B}(\mathbb{R})
\]
for $s=\sigma \in \mathbb{R}$, where we put $N(q,s)=\sum_{f \in B'_2(q,s)} \omega_q(f)$.  
Then, using the zero density estimate \cite[Theorem 4]{KowalskiMichel1999} along with \eqref{eq220321} and \eqref{eq220322}, we obtain
\[
N(q,s)
=\sum_{f \in B_2(q,s)} \omega_q(f)
+O\Bigg( \sum_{f \in B_2(q)} \omega_q(f) N(f;\sigma,t-1,t+1)\Bigg)
=1+o(1)
\]
as $q\to\infty$. 
Note that $\mu_{s,q}$ and $\mu_{\sigma,q}$ are probability measures on $(\mathbb{C}, \mathcal{B}(\mathbb{C}))$ and on $(\mathbb{R}, \mathcal{B}(\mathbb{R}))$, respectively. 
To begin with, we recall the continuity theorem for probability measures. 

\begin{lemma}[Jessen--Wintner \cite{JessenWintner1935}, Section 3]\label{lemConti}
Let $(\mu_n)_{n \in \mathbb{N}}$ be a sequence of probability measures on $(\mathbb{R}^d, \mathcal{B}(\mathbb{R}^d))$, and let $\mu$ be a probability measure on $(\mathbb{R}^d, \mathcal{B}(\mathbb{R}^d))$. 
If the characteristic functions satisfy
\begin{gather}\label{eq111431}
\lim_{n \to\infty} \Lambda(y;\mu_n)
=\Lambda(y;\mu) 
\end{gather}
uniformly in every sphere $|y| \leq R$, then the limit formula
\[
\lim_{n \to\infty} \mu_n(A)
=\mu(A)
\]
holds for all $A \in \mathcal{B}(\mathbb{R}^d)$ with $\mu(\partial{A})=0$. 
\end{lemma}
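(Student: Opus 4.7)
The plan is to reduce Lemma~\ref{lemConti} to two classical theorems of probability, namely L\'{e}vy's continuity theorem and the Portmanteau theorem. First, I would note that the hypothesis of uniform convergence on every compact sphere $\{|y| \leq R\}$ in particular yields the pointwise convergence $\Lambda(y;\mu_n) \to \Lambda(y;\mu)$ for every $y \in \mathbb{R}^d$. Since $\Lambda(\,\cdot\,;\mu)$ is the characteristic function of an honest probability measure, it is automatically continuous at the origin with value $1$ there, so the limit is a \emph{bona fide} characteristic function.

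The second and main step is to apply L\'{e}vy's continuity theorem: pointwise convergence of characteristic functions to a limit continuous at the origin is sufficient to conclude weak convergence of the underlying probability measures. The essential ingredient is a standard tightness estimate bounding $\mu_n(\{|x|>T\})$ in terms of $\int_{|y| \leq 1/T}(1-\RE\Lambda(y;\mu_n))\,dy$; combined with the pointwise convergence from the first step and dominated convergence, this forces the family $(\mu_n)_{n \in \mathbb{N}}$ to be tight. Prokhorov's theorem then supplies a weakly convergent subsequence, and the Fourier inversion formula (uniqueness of characteristic functions) forces every subsequential weak limit to coincide with $\mu$. Since this reasoning applies to every subsequence, the whole sequence converges weakly, $\mu_n \Rightarrow \mu$.

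The third step is to invoke the Portmanteau theorem, which asserts that weak convergence $\mu_n \Rightarrow \mu$ on $\mathbb{R}^d$ is equivalent to $\mu_n(A) \to \mu(A)$ for every Borel set $A$ satisfying $\mu(\partial A) = 0$. This is precisely the conclusion \eqref{eq111431} of the lemma.

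The principal technical obstacle, if one were to write out a self-contained argument, is the tightness estimate at the heart of L\'{e}vy's theorem, which requires an ad~hoc Fubini manipulation relating $1-\RE\Lambda$ to an average of truncated probabilities. Since, however, Lemma~\ref{lemConti} is a classical result explicitly attributed in the statement to Jessen--Wintner \cite{JessenWintner1935}, the cleanest treatment is to cite that paper (or any standard probability reference) and omit the details of these three well-known steps.
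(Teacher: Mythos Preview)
Your proposal is correct, and in fact gives more than the paper does: the paper provides no proof at all for this lemma, simply citing Jessen--Wintner \cite{JessenWintner1935} as the source and moving on. Your three-step reduction (pointwise convergence of characteristic functions, L\'{e}vy's continuity theorem via tightness and Prokhorov, then Portmanteau) is the standard modern route to this classical result, and your closing remark that one should just cite the reference matches exactly what the paper does.
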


In addition, we use a lemma of Ihara--Matsumoto \cite{IharaMatsumoto2011b}. 
Let $\Lambda(\mathbb{R}^d)$ be the set of all functions $\Phi \in L^1(\mathbb{R}^d)$ such that $\Phi$ is continuous and its Fourier transform belongs to $L^1(\mathbb{R}^d)$. 
Then a function $M: \mathbb{R}^d \to \mathbb{R}_{\geq0}$ is called a \textit{good density function} if it belongs to $\Lambda(\mathbb{R}^d)$ and satisfies
\[
\int_{\mathbb{R}^d} M(x) \,|dx|
=1,  
\]
where we write $|dx|=(2\pi)^{-d/2} dx_1 \cdots dx_d$ for $x=(x_1,\ldots,x_d)$. 

\begin{lemma}[Ihara--Matsumoto \cite{IharaMatsumoto2011b}, Section 5]\label{lemIhaMat}
Let $M$ be a good density function on $\mathbb{R}^d$. 
Let $(X_n)_{n \in \mathbb{N}}$ be a sequence of finite sets equipped with probability measures $\omega_n$ and functions $\ell_n: X_n \to \mathbb{R}^d$. 
Then we consider the condition 
\begin{gather}\label{eq132207}
\lim_{n \to\infty} \sum_{\chi \in X_n} \omega_n(\chi) \Phi(\ell_n(\chi))
=\int_{\mathbb{R}^d} \Phi(x) M(x) \,|dx| 
\end{gather}
for a test function $\Phi: \mathbb{R}^d \to \mathbb{C}$. 
Suppose that condition \eqref{eq132207} holds for any additive characters $\Phi(x)=e^{i \langle x, y \rangle}$ with $y \in \mathbb{R}^d$, and that the convergence is uniform in every sphere $|x| \leq R$. 
Then we have the following results. 
\begin{enumerate}
\item[$(\mathrm{a})$] 
\eqref{eq132207} holds for any bounded continuous function on $\mathbb{R}^d$. 
\item[$(\mathrm{b})$] 
\eqref{eq132207} holds for any $\Phi \in C(\mathbb{R}^d)$ such that $\Phi(x) \ll \phi_0(|x|)$, where $\phi_0(r)$ is a continuous non-decreasing function on $[0,\infty)$ which satisfies $\phi_0(r)>0$, $\phi_0(r) \to\infty$ as $r \to\infty$, and 
\begin{align}
&\sum_{\chi \in X_n} \omega_n(\chi) \phi_0(|\ell_n(\chi)|)^2
\ll 1, 
\label{eq112209}\\
&\int_{\mathbb{R}^d} \phi_0(|x|) M(x) \,|dx|
<\infty. 
\label{eq112210}
\end{align}
\end{enumerate}
\end{lemma}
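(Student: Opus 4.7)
The plan is to recast the convergence assumption in terms of probability measures on $\mathbb{R}^d$, invoke L\'{e}vy's continuity theorem (Lemma \ref{lemConti}) for part $(\mathrm{a})$, and reduce part $(\mathrm{b})$ to part $(\mathrm{a})$ by a truncation argument that exploits the monotonicity of $\phi_0$ together with hypothesis \eqref{eq112209}.

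First I would set $\mu_n$ to be the probability measure on $(\mathbb{R}^d, \mathcal{B}(\mathbb{R}^d))$ obtained as the push-forward $\mu_n(A) = \sum_{\chi \in X_n} \omega_n(\chi) 1_A(\ell_n(\chi))$, and let $\mu$ be the probability measure with density $M$ with respect to $|dx|$. The hypothesis on additive characters translates exactly to $\Lambda(y;\mu_n) \to \Lambda(y;\mu)$ uniformly on each ball $|y| \leq R$. Lemma \ref{lemConti} then yields $\mu_n(A) \to \mu(A)$ for every Borel set $A$ with $\mu(\partial A) = 0$; via a standard Portmanteau-type argument (approximate bounded continuous $\Phi$ uniformly by simple functions built from continuity sets of $\mu$, using that $\mu$ is absolutely continuous so spheres of all but countably many radii are continuity sets) this is equivalent to $\int \Phi \,d\mu_n \to \int \Phi \,d\mu$ for every $\Phi \in C_b(\mathbb{R}^d)$, which is precisely part $(\mathrm{a})$.

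For part $(\mathrm{b})$, I would fix a parameter $R > 0$ and a cutoff $\eta_R \in C_b(\mathbb{R}^d)$ with $0 \leq \eta_R \leq 1$, $\eta_R \equiv 1$ on $\{|x| \leq R\}$, and $\eta_R \equiv 0$ on $\{|x| \geq R+1\}$. Since $\Phi \eta_R$ is bounded and continuous, part $(\mathrm{a})$ gives convergence of $\sum_\chi \omega_n(\chi) \Phi(\ell_n(\chi)) \eta_R(\ell_n(\chi))$ to $\int \Phi \eta_R M \,|dx|$ as $n \to \infty$. The remaining step is to bound the contribution of $\{|x| \geq R\}$ uniformly in $n$ and in the limiting integral. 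Using that $\phi_0$ is non-decreasing and positive, the Markov-type inequality $1_{|x|\geq R} \leq \phi_0(|x|)/\phi_0(R)$ yields
\[
\sum_{\chi \in X_n} \omega_n(\chi) |\Phi(\ell_n(\chi))| 1_{|\ell_n(\chi)|\geq R}
\ll \frac{1}{\phi_0(R)} \sum_{\chi \in X_n} \omega_n(\chi) \phi_0(|\ell_n(\chi)|)^2
\ll \frac{1}{\phi_0(R)}
\]
by \eqref{eq112209}, uniformly in $n$; and similarly the limiting tail $\int_{|x| \geq R} |\Phi(x)| M(x) \,|dx| \ll \int_{|x| \geq R} \phi_0(|x|) M(x) \,|dx|$ tends to $0$ as $R \to \infty$ by \eqref{eq112210} and dominated convergence. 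Letting $n \to \infty$ first and then $R \to \infty$ yields \eqref{eq132207} for $\Phi$ as in $(\mathrm{b})$.

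The main obstacle I anticipate is the $n$-uniform control of the tail of the discrete sum, which is exactly where the squared moment hypothesis \eqref{eq112209} pays off. Weak convergence from part $(\mathrm{a})$ alone cannot preclude a slow leakage of mass to infinity that is invisible to bounded continuous tests but visible to an unbounded $\phi_0$; the point is that trading one factor of $\phi_0$ for the indicator $1_{|x|\geq R}$ reduces the tail bound to a fixed ``second moment'' that is uniformly bounded by assumption. A secondary point to check is that Lemma \ref{lemConti} is indeed applicable to $\mu$: this requires continuity of $\Lambda(\cdot;\mu) = \widehat{M}$ at $0$, which follows from $M \in L^1(\mathbb{R}^d)$ by Riemann--Lebesgue.
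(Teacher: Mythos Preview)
Your proof is correct. Note, however, that the paper does not supply its own proof of this lemma: it is quoted from Ihara--Matsumoto \cite{IharaMatsumoto2011b}, Section 5, and no argument is given here (see also Remark~\ref{rem:IhaMat}). So there is no ``paper's proof'' to compare against beyond the citation.

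Your approach---recasting the hypothesis as convergence of characteristic functions of the push-forward measures, invoking L\'evy's continuity theorem (Lemma~\ref{lemConti}) and the Portmanteau equivalence for part~$(\mathrm{a})$, then a cutoff-plus-tail argument for part~$(\mathrm{b})$ with the key Markov-type step $|\Phi(x)|\,1_{|x|\geq R}\ll \phi_0(|x|)^2/\phi_0(R)$ exploiting the squared-moment bound \eqref{eq112209}---is exactly the standard route and matches what one finds in the original reference.

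One small remark: your closing sentence about needing continuity of $\Lambda(\cdot;\mu)$ at $0$ via Riemann--Lebesgue is misdirected. Riemann--Lebesgue concerns decay at infinity, not continuity at the origin; and in any case the characteristic function of \emph{any} probability measure is uniformly continuous, so no extra hypothesis on $M$ is needed for Lemma~\ref{lemConti} to apply. This does not affect the validity of your argument.
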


\begin{remark}\label{rem:IhaMat}
The statement of Lemma \ref{lemIhaMat} is slightly different from the original one by Ihara--Matsumoto \cite{IharaMatsumoto2011b}. 
Indeed, it contained the following result in addition to $(\mathrm{a})$ and $(\mathrm{b})$ above:  
\begin{enumerate}
\item[$(\mathrm{c})$] 
\eqref{eq132207} further holds for the indicator function of either a compact subset of $\mathbb{R}^d$ or the complement of such a subset.
\end{enumerate}
However, the author was informed from Matsumoto that the proof of $(\mathrm{c})$ does not work in special cases such as fat Cantor sets. 
For more details, see the appendix in the forthcoming paper of Matsumoto \cite{Matsumoto2021+}. 
\end{remark}

\begin{proof}[Proof of Theorem \ref{thmMV} $(\mathrm{i})$]
Let $R>0$ be a fixed real number. 
First, we remark that Proposition \ref{propCM1} implies 
\begin{gather}\label{eq111418}
\sum_{f \in B'_2(q,s)} \omega_q(f) \psi_{z,\overline{z}}(\log{L}(s,f))
=\widetilde{\mathcal{M}}_s(z,\overline{z},\Theta)
+O\left((\log{q})^{-B}\right) 
\end{gather}
uniformly for $|z| \leq R$ along with \eqref{eq220322}, \eqref{eq302036}, and $|\psi_{z,\overline{z}}(w)|=1$. 
Let $(q_n)_{n \in \mathbb{N}}$ be a sequence of prime numbers such that $q_n \to\infty$ as $n \to\infty$. 
We apply Lemma \ref{lemLevy} with $\mu_n=\mu_{s,q_n}$ of \eqref{eq111344} and $\mu=\mu_s$ of \eqref{eq111343}. 
Since we have 
\begin{align*}
\Lambda(z;\mu_n)
&=\frac{1}{N(q_n,s)}
\sum_{f \in B'_2(q_n,s)} \omega_{q_n}(f) \psi_{z,\overline{z}}(\log{L}(s,f)), \\
\Lambda(z;\mu)
&=\widetilde{\mathcal{M}}_s(z,\overline{z},\Theta), 
\end{align*}
and $N(q_n,s) \to1$ as $n \to\infty$, it is deduced from \eqref{eq111418} that condition \eqref{eq111431} holds uniformly in $|z| \leq R$. 
By Theorem \ref{thmPDF} and Lemma \ref{lemLevy}, we obtain desired formula \eqref{eq111435} for $\Phi=1_A$ with any continuity set $A \subset \mathbb{C}$. 

Next, we use Lemma \ref{lemIhaMat} to see the remaining cases.  
Note that $\mathcal{M}_s(w,\Theta)$ is a good density function on $\mathbb{C}$. 
Let $X_n=B'_2(q_n,s)$. 
We define
\[
\omega_n(f)
=\frac{\omega_{q_n}(f)}{N(q_n,s)}
\quad\text{and}\quad
\ell_n(f)
=\log{L}(s,f) 
\]
for $f \in X_n$. 
From Theorem \ref{thmPDF}, we have
\[
\widetilde{\mathcal{M}}_s(z,\overline{z},\Theta)
=\int_{\mathbb{C}} \psi_{z,\overline{z}}(w) \mathcal{M}_s(w,\Theta) \,|dw|. 
\]
By this and \eqref{eq111418}, condition \eqref{eq132207} holds for $\Phi(w)=e^{i \langle z, w \rangle}$ with all $z \in \mathbb{C}$, and the convergence is uniform in $|z| \leq R$. 
Therefore we derive from Lemma \ref{lemIhaMat} $(\mathrm{a})$ that \eqref{eq111435} holds for any bounded continuous function $\Phi$. 
In order to use Lemma \ref{lemIhaMat} $(\mathrm{b})$, we proceed to checking conditions \eqref{eq112209} and \eqref{eq112210} as follows. 
\begin{itemize}
\item
Let $\sigma>1$. 
For any $\Phi \in C(\mathbb{C})$, we take the function $\phi_0$ as
\[
\phi_0(r)
=\max_{|x| \leq r} |\Phi(x)|. 
\]
Then \eqref{eq112209} is valid since we have $\phi_0(|\log{L}(s,f)|) \leq \phi_0(2\log\zeta(\sigma))$ for $\sigma>1$. 
Furthermore, \eqref{eq112210} is also valid by Corollary \ref{corSupp}. 
\item
Let $\sigma=1$. 
In this case, we take $\phi_0(r)=r^a$, where $a>0$ is a fixed real number.  
Let $\mathcal{E}=\mathcal{E}(q,s)$ be the set of Proposition \ref{propCM1}. 
Then we have 
\[
\sum_{f \in B_2(q) \setminus \mathcal{E}} \omega_q(f)
\phi_0(|\log{L}(s,f)|)
\ll 1
\]
by Corollary \ref{corUB}. 
Hence condition \eqref{eq112209} holds if we derive the estimate
\begin{gather}\label{eq171732}
\sum_{f \in \mathcal{E}} \omega_q(f)
\phi_0(|\log{L}(s,f)|)
\ll 1. 
\end{gather}
By a standard argument along with the zero-free region of $L(s,f)$, there exists an absolute constant $c>0$ such that we have
\[
\frac{L'}{L}(s,f)
\ll (\log{q}(|t|+3))^2
\]
for all $s=\sigma+it$ such that $\sigma \geq 1-c/\log{q}(|t|+2)$; see \cite[Proposition 5.7]{IwaniecKowalski2004}. 
It yields the upper bound 
\begin{gather}\label{eq112355}
\log{L}(s,f)
\ll (\log{q}(|t|+3))^2, 
\end{gather}
on the vertical line $\RE(s)=1$. 
Thus estimate \eqref{eq171732} is derived from \eqref{eq220322} and \eqref{eq302036}. 
On the other hand, we see that the expected value 
\[
\mathbb{E}[\phi(|\log{L}(s,\Theta)|)]
\]
is finite for all $\phi$ with $\phi(r) \ll e^{ar}$ by using Lemma \ref{lemMGF} and the Cauchy--Schwarz inequality. 
Hence condition \eqref{eq112210} follows in this case. 
\item
Let $1/2<\sigma\leq1$ and assume GRH. 
We take the function $\phi_0$ as $\phi_0(r)=e^{ar}$, where $a>0$ is a fixed real number.  
Then we use Corollary \ref{corUB} to derive 
\[
\sum_{f \in B_2(q) \setminus \mathcal{E}} \omega_q(f)
\phi_0(|\log{L}(s,f)|)
\ll 1
\]
in this case. 
Furthermore, we have the upper bound 
\[
\log{L}(s,f)
\ll \frac{(\log{q})^{2-2\sigma}}{\log\log{q}}
+\log\log{q}
\]
for $f \in B_2(q)$ under GRH; see \cite[Theorem 5.19]{IwaniecKowalski2004}. 
Hence we deduce
\[
\sum_{f \in \mathcal{E}} \omega_q(f)
\phi_0(|\log{L}(s,f)|)
\ll 1 
\]
along with \eqref{eq220322} and \eqref{eq302036}. 
Therefore we obtain \eqref{eq112209} in this case. 
As we have checked, condition \eqref{eq112210} is valid for $\phi_0(r)=e^{ar}$. 
\end{itemize}
From the above, Lemma \ref{lemIhaMat} $(\mathrm{b})$ yields \eqref{eq111435} for continuous test functions $\Phi$ as in the statement of Theorem \ref{thmMV} $(\mathrm{i})$. 
\end{proof}

\begin{proof}[Proof of Theorem \ref{thmMV} $(\mathrm{ii})$]
The difference from Theorem \ref{thmMV} $(\mathrm{i})$ arises from only the case $\sigma=1$. 
We prove the result in this case by using Lemma \ref{lemIhaMat} $(\mathrm{b})$ again. 
Then the estimates of Cogdell--Michel \cite[Lemmas 4.1 and 4.2]{CogdellMichel2004} yield
\[
\log{L}(1,f)
\ll\log\log{q}
\]
unconditionally. 
Hence we deduce condition \eqref{eq112209} with $\phi_0(r)=e^{ar}$ by using this instead of \eqref{eq112355}. 
Since \eqref{eq112210} is still valid, we conclude that desired formula \eqref{eq111435} holds for any $\Phi\in{C}^{\exp}(\mathbb{R})$. 
\end{proof}

\subsection{Proof of Theorem \ref{thmDB}}\label{sec4.3}
We prove Theorem \ref{thmDB} by using Esseen's inequality and its two-dimensional analogue. 

\begin{lemma}[Lo\`eve \cite{Loeve1977}]\label{lemEsseen1}
Let $\mu$ and $\nu$ be probability measures on $(\mathbb{R}, \mathcal{B}(\mathbb{R}))$ with distribution functions
\[
F(x)
=\mu((-\infty,x])
\quad\text{and}\quad
G(x)
=\nu((-\infty,x]). 
\] 
Suppose that $G$ is differentiable, and we put $A=\sup_{x \in \mathbb{R}} |G'(x)|$. 
Denote by $f(u)=\Lambda(u;\mu)$ and $g(u)=\Lambda(u;\nu)$ the characteristic functions. 
Then we have 
\begin{gather}\label{eq122329}
\sup_{x \in \mathbb{R}} |F(x)-G(x)|
\leq \frac{1}{\pi}
\int_{-R}^{R} \left| \frac{f(u)-g(u)}{u} \right| \,du
+\frac{24}{\pi} \frac{A}{R}
\end{gather}
for any $R>0$. 
\end{lemma}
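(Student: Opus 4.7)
The plan is to prove this by the classical smoothing technique of Esseen. The central difficulty is that $(f-g)/u$ need not be integrable near infinity, so one cannot directly invert $f - g$ to recover $F - G$; instead one recovers a \emph{smoothed} version of $F - G$ and controls the smoothing error via the density bound $A$.

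First, I would set $\Delta = \sup_{x} |F(x) - G(x)|$ and, using right-continuity of $F$ together with continuity of $G$, locate a point $x_0$ at which the supremum is essentially attained. After possibly swapping the roles of $\mu$ and $\nu$, assume $F(x_0) - G(x_0)$ is arbitrarily close to $\Delta$. Since $F$ is non-decreasing and $0 \leq G' \leq A$, for every $t \geq 0$ one has the one-sided lower bound $F(x_0 + t) - G(x_0 + t) \geq \Delta - At$, so $F - G$ dominates a descending triangular wedge of height $\Delta$ and slope $-A$ to the right of $x_0$.

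Next, I would convolve $F - G$ with the Fej\'er-type kernel
\[
k_R(x) = \frac{R}{2\pi} \left( \frac{\sin(Rx/2)}{Rx/2} \right)^{\!2},
\]
whose Fourier transform $\widehat{k_R}(u) = \max\{1 - |u|/R,\, 0\}$ is supported in $[-R, R]$. Evaluating $((F - G) \ast k_R)(x_0 + \eta)$ for a suitable shift $\eta$ proportional to $\Delta/A$ and inserting the triangular lower bound yields a lower estimate of the form $c_1 \Delta - c_2 A/R$. Independently, since $F - G$ has bounded variation and vanishes at $\pm\infty$, integration by parts together with Fourier inversion expresses the same convolution as
\[
\frac{1}{2\pi} \int_{-R}^{R} e^{-iu(x_0 + \eta)}\, \widehat{k_R}(u)\, \frac{f(u) - g(u)}{iu} \,du,
\]
whose modulus is at most $\frac{1}{2\pi} \int_{-R}^{R} \left| \frac{f(u) - g(u)}{u} \right| du$ since $0 \leq \widehat{k_R} \leq 1$. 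Comparing the two expressions and rearranging yields the claimed bound.

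The main obstacle is bookkeeping of the sharp numerical constants: obtaining exactly $1/\pi$ in front of the integral and exactly $24/\pi$ in front of $A/R$ requires the specific choice of $\eta$ relative to the support of the triangular wedge together with an explicit evaluation of $\int k_R(x)\,dx$ on suitable intervals, plus a doubling factor coming from symmetrizing over the two possible signs of $F - G$. Since this is a classical result of Esseen, the most economical route in practice is to cite Lo\`eve's textbook directly; the argument above is the one carried out there.
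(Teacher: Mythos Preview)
Your sketch of the classical Esseen smoothing argument is correct, and your closing remark already anticipates what the paper does: the paper provides no proof at all for this lemma, simply stating it with attribution to Lo\`eve's textbook. So your recommendation to cite the reference directly is precisely the route the paper takes, and your outline of the Fej\'er-kernel smoothing is extra detail beyond what appears there.
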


\begin{lemma}[Sadikova \cite{Sadikova1966}]\label{lemEsseen2}
Let $\mu$ and $\nu$ be probability measures on $(\mathbb{R}^2, \mathcal{B}(\mathbb{R}^2))$ with distribution functions
\[
F(x,y)
=\mu((-\infty,x]\times(-\infty,y])
\quad\text{and}\quad
G(s,t)
=\nu((-\infty,x]\times(-\infty,y]). 
\] 
Suppose that $G$ is partially differentiable, and we put $A_1=\sup_{(x,y) \in \mathbb{R}^2} |G_x(x,y)|$ and $A_2=\sup_{(x,y) \in \mathbb{R}^2} |G_y(x,y)|$. 
Denote by $f(u,v)=\Lambda(u,v;\mu)$ and $g(u,v)=\Lambda(u,v;\nu)$ the characteristic functions, and we define
\[
\hat{f}(u,v)
=f(u,v)-f(u,0)f(0,v)
\quad\text{and}\quad
\hat{g}(u,v)
=g(u,v)-g(u,0)g(0,v).
\]
Then we have 
\begin{align}\label{eq121643}
&\sup_{(x,y) \in \mathbb{R}^2} |F(x,y)-G(x,y)|
\leq \frac{2}{(2\pi)^2} \iint_{[-R,R]^2}
\left| \frac{\hat{f}(u,v)-\hat{g}(u,v)}{uv} \right| \,dudv \\
&\qquad
+\frac{2}{\pi} \int_{-R}^{R} 
\left| \frac{f(u,0)-g(u,0)}{u} \right| \,du
+\frac{2}{\pi} \int_{-R}^{R} 
\left| \frac{f(0,v)-g(0,v)}{v} \right| \,dv \nonumber\\
&\qquad
+\left( 3\sqrt{2}+ 4\sqrt{3}+ \frac{24}{\pi} \right) 
\frac{2(A_1+A_2)}{R} \nonumber
\end{align}
for any $R>0$. 
\end{lemma}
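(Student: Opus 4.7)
The plan is a two-dimensional Esseen-type smoothing argument that splits the joint error into contributions from the marginals and from the ``dependence'' part, then applies Lemma \ref{lemEsseen1} to the marginals and Fourier inversion with a bandlimited kernel to the dependence part.

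First I would introduce the marginals $F_1(x) = \lim_{y \to \infty} F(x,y)$, $F_2(y) = \lim_{x \to \infty} F(x,y)$, and $G_1, G_2$ defined analogously. Their characteristic functions are $f(u,0), f(0,v), g(u,0), g(0,v)$ respectively. Taking the limit $y \to \infty$ in $G_x(x,y)$ (via dominated convergence) gives $\sup_x |G_1'(x)| \leq A_1$, and symmetrically $\sup_y |G_2'(y)| \leq A_2$. I would then use the algebraic identity
\begin{align*}
F(x,y) - G(x,y) &= \bigl[(F - F_1 F_2)(x,y) - (G - G_1 G_2)(x,y)\bigr] \\
&\quad + G_2(y)[F_1(x) - G_1(x)] + F_1(x)[F_2(y) - G_2(y)],
\end{align*}
bound the marginal factors by $1$, and apply Lemma \ref{lemEsseen1} to the pairs $(F_1, G_1)$ and $(F_2, G_2)$ with parameter $R$ to produce the two single-integral terms on the right-hand side of \eqref{eq121643} together with a contribution $24(A_1+A_2)/(\pi R)$ to the tail.

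It then remains to bound $\sup |H_F - H_G|$ where $H_F = F - F_1 F_2$ and $H_G = G - G_1 G_2$. The signed measure determined by $H_F$ has characteristic function precisely $\hat f(u,v)$, and note that $\hat f(u,0) = \hat f(0,v) = 0$ identically (since $f(0,0)=1$), so $\hat f(u,v)/(uv)$ is locally integrable near the axes; the same holds for $\hat g$. I would then perform a 2D smoothing by convolving $H_F - H_G$ with a product of Fej\'er-type kernels whose Fourier transforms are supported in $[-R,R]$. Fourier inversion for the smoothed difference yields the double integral of $(\hat f - \hat g)/(uv)$ over $[-R,R]^2$ with prefactor $2/(2\pi)^2$, and the smoothing error, inherited from the Lipschitz-type control $A_1 + A_2$ on $G$, contributes the leftover $2(3\sqrt 2 + 4\sqrt 3)(A_1+A_2)/R$ piece.

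The main obstacle will be the Fourier-analytic bookkeeping in the 2D smoothing step: verifying the integrability of $\hat f(u,v)/(uv)$ near both axes, choosing the kernel so that the Fej\'er weights match those used in the proof of the one-dimensional Esseen inequality, and tracking absolute constants carefully to arrive exactly at $3\sqrt 2 + 4\sqrt 3 + 24/\pi$. Since the lemma is attributed to Sadikova \cite{Sadikova1966}, I would refer to that paper for the explicit constant-tracking calculation rather than reproduce it here.
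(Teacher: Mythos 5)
The paper does not prove this lemma: it is cited directly to Sadikova's 1966 paper and used as a black box, so there is no in-paper proof to compare your attempt against.

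Your sketch captures the standard structure of a two-dimensional Esseen-type argument. The decomposition identity
\[
F - G = \bigl[(F - F_1F_2) - (G - G_1G_2)\bigr] + G_2\,[F_1 - G_1] + F_1\,[F_2 - G_2]
\]
is algebraically correct, the signed measure with distribution function $H_F = F - F_1F_2$ does indeed have Fourier--Stieltjes transform $\hat f(u,v)$, the vanishing $\hat f(u,0) = \hat f(0,v) = 0$ is exactly what makes $\hat f/(uv)$ have a chance of being locally integrable, and applying Lemma \ref{lemEsseen1} to the two marginal pairs is the natural move. The parts you defer --- choosing the 2D bandlimited smoothing kernel and the bookkeeping needed to land exactly on the constant $3\sqrt{2}+4\sqrt{3}+24/\pi$ --- are genuinely the hard part of Sadikova's theorem, and you honestly flag them and refer to the original source. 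Since the paper itself does precisely the same (cites the result rather than reproves it), this is a fair resolution.

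One small observation worth noting: your decomposition, combined with Lemma \ref{lemEsseen1}, would produce coefficients $1/\pi$ on the two marginal integrals and $24(A_1+A_2)/(\pi R)$ for the associated tail, whereas \eqref{eq121643} carries $2/\pi$ and $48(A_1+A_2)/(\pi R)$ on those pieces. This is not an error --- a sharper bound implies the stated one --- but the factor-of-$2$ gap suggests Sadikova's actual organization of the argument is somewhat different from the marginal-splitting you propose (for instance, she may absorb some of the smoothing error for the dependence term $H_F - H_G$ into those coefficients). If you want the exact constants as stated, you would need to follow her route rather than the one you sketched; if you only need a clean bound $\ll R_\sigma(q)^{-1}$ as in the application in Theorem \ref{thmDB}, either version suffices.
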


\begin{proof}[Proof of Theorem \ref{thmDB} $(\mathrm{i})$]
Let $\mathcal{E}=\mathcal{E}(q,s)$ be the set of Proposition \ref{propCM1}. 
We define a probability measure $\mu_{s,q}(\,\cdot\,;\omega_q)^\mathcal{E}$ on $(\mathbb{C},\mathcal{B}(\mathbb{C}))$ by
\[
\mu_{s,q}(A;\omega_q)^\mathcal{E}
=\frac{1}{N(q,s)^\mathcal{E}}
\sum_{f \in B_2(q) \setminus \mathcal{E}} \omega_q(f) 1_A(\log{L}(s,f)), 
\]
where $N(q,s)^\mathcal{E}=\sum_{f \in B_2(q) \setminus \mathcal{E}} \omega_q(f)$. 
Then we apply Lemma \ref{lemEsseen2} with $\mu=\mu_{s,q}^\mathcal{E}$ and $\nu=\mu_s$ of \eqref{eq111343} by identifying $\mathbb{C}$ with $\mathbb{R}^2$. 
Since we have 
\[
G(x,y)
=\int_{\RE(w) \leq x,~ \IM(w) \leq y} \mathcal{M}_s(w,\Theta) \,|dw| 
\]
in this case, the function $G$ is partially differentiable. 
In particular, we obtain
\[
G_x(x,y)
=\int_{-\infty}^{y} \mathcal{M}_s(x+iv,\Theta) \,|dv| 
\quad\text{and}\quad
G_y(x,y)
=\int_{-\infty}^{x} \mathcal{M}_s(u+iy,\Theta) \,|du|.
\] 
Therefore, $|G_x(x,y)|$ is bounded as 
\[
|G_x(x,y)|
\leq G_1(x)
:=\int_{-\infty}^{\infty} \mathcal{M}_s(x+iv,\Theta) \,|dv| 
\]
for any $(x,y) \in \mathbb{R}^2$. 
Since $G_1(x)$ is a non-negative continuous function satisfying
\[
\int_{-\infty}^{\infty} G_1(x) \,|dx|
=\int_{\mathbb{C}} \mathcal{M}_s(w,\Theta) \,|dw|
=1, 
\]
we conclude that $\sup_{x \in \mathbb{R}} G_1(x)$ is finite. 
As a result, $A_1=\sup_{(x,y) \in \mathbb{R}^2} |G_x(x,y)|$ is also finite. 
One can show the finiteness of $A_2=\sup_{(x,y) \in \mathbb{R}^2} |G_y(x,y)|$ in a similar way. 
Next, we consider the characteristic functions of $\mu=\mu_{s,q}^\mathcal{E}$ and $\nu=\mu_s$. 
Note that we have
\[
N(q,s)^\mathcal{E}
=\sum_{f \in B_2(q)} \omega_q(f)
-\sum_{f \in \mathcal{E}} \omega_q(f)
=1+O\left( \exp\left(-\frac{b_1}{2} \frac{\log{q}}{\log\log{q}}\right) \right)
\]
by \eqref{eq220321}, \eqref{eq220322}, and \eqref{eq302036}. 
Let $z=u+iv$. 
Then the characteristic function of $\mu$ is calculated as
\begin{align}\label{eq122113}
f(u,v)
&=\frac{1}{N(q,s)^\mathcal{E}}
\sum_{f \in B_2(q) \setminus \mathcal{E}} \omega_q(f)
\psi_{z,\overline{z}}(\log{L}(s,f))  \\
&=\widetilde{\mathcal{M}}_{s,q}(z,\overline{z},\omega_q)
+O\left( \exp\left(-\frac{b_1}{2} \frac{\log{q}}{\log\log{q}}\right) \right) \nonumber
\end{align}
by noting that $|\psi_{z,\overline{z}}(w)|=1$. 
Furthermore, we have
\begin{gather}\label{eq122114}
g(u,v)
=\widetilde{\mathcal{M}}_s(z,\overline{z},\Theta). 
\end{gather}
Put $r=(\log{q})^{-2}$. 
We divide the first integral of the right-hand side of \eqref{eq121643} as 
\[
\iint_{[-R,R]^2} \left|\frac{\hat{f}(u,v)-\hat{g}(u,v)}{uv}\right| \,dudv
=\iint_{[-R,R]^2\setminus{C}(r)}+ \iint_{C(r)}
=I_1+I_2,  
\]
say, where we define
\[
C(r)
=\left\{ (u,v) \in [-R,R]^2 ~\middle|~ \text{$|u| \leq r$ or $|v| \leq r$} \right\}. 
\]
Then we have 
\begin{gather}\label{eq122116}
I_1
\ll \left(\log\frac{R}{r}\right)^2
\sup_{(u,v) \in [-R,R]^2} \left|\hat{f}(u,v)-\hat{g}(u,v)\right|, 
\end{gather}
and moreover, the inequality
\[
\left|\hat{f}(u,v)-\hat{g}(u,v)\right|
\leq |f(u,v)-g(u,v)|+|f(u,0)-g(u,0)|+|f(0,v)-g(0,v)|
\]
holds. 
We take $R=\frac{1}{\sqrt{2}} a_1 R_\sigma(q)$, where $a_1$ and $R_\sigma(q)$ are as in Proposition \ref{propCM1}. 
Then the condition $|z| \leq a_1 R_\sigma(q)$ is satisfied for all $z=u+iv$ such that $(u,v) \in [-R,R]^2$. 
By \eqref{eq122113} and \eqref{eq122114}, we deduce from Proposition \ref{propCM1} the upper bound 
\begin{gather}\label{eq122239}
|f(u,v)-g(u,v)|
\ll (\log{q})^{-2}
\end{gather}
for $(u,v) \in [-R,R]^2$, where the implied constant depends only on $s$. 
Hence \eqref{eq122116} yields the estimate 
\begin{gather}\label{eq122117}
I_1
\ll (\log{q})^{-2} \log\log{q}. 
\end{gather}
Next, by the definition of $\hat{f}(u,v)$, we have 
\begin{align*}
\hat{f}(u,v)
&=(f(u,v)-f(u,0)-f(0,v)+1)-(f(u,0)-1)(f(0,v)-1) \\
&=\iint_{\mathbb{R}^2} (e^{ixu}-1)(e^{iyv}-1) \,d \mu(x,y)\\
&\qquad
-\iint_{\mathbb{R}^2} (e^{ixu}-1) \,d \mu(x,y)
\cdot \iint_{\mathbb{R}^2} (e^{iyv}-1) \,d \mu(x,y). 
\end{align*}
Since $e^{i\theta}-1 \ll |\theta|$ for all $\theta \in \mathbb{R}$, we obtain 
\begin{align*}
\hat{f}(u,v)
&\ll |uv| \iint_{\mathbb{R}^2} |xy| \,d \mu(x,y)
+|u| \iint_{\mathbb{R}^2} |x| \,d \mu(x,y)
\cdot |v| \iint_{\mathbb{R}^2} |y| \,d\mu(x,y) \\
&\ll |uv| \iint_{\mathbb{R}^2} (x^2+y^2) \,d \mu(x,y). 
\end{align*}
Furthermore, we have 
\[
\iint_{\mathbb{R}^2} (x^2+y^2) \,d \mu(x,y)
=\frac{1}{N(q,s)^\mathcal{E}}
\sum_{f \in B_2(q) \setminus \mathcal{E}} \omega_q(f) |\log{L}(s,f)|^2
\ll 1
\]
by Corollary \ref{corUB}. 
Thus the estimate $\hat{f}(u,v) \ll |uv|$ follows. 
By a similar argument, we evaluate $\hat{g}(u,v)$ as
\[
\hat{g}(u,v)
\ll |uv| \iint_{\mathbb{R}^2} (x^2+y^2) \,d \nu(x,y)
\ll |uv|, 
\]
where the implied constant depends only on $s$ and the choice of $\Theta$. 
Therefore, the integral $I_2$ is estimated as
\begin{gather}\label{eq122229}
I_2
\ll \meas (C(r))
\ll (\log{q})^{-1}, 
\end{gather}
where $\meas(A)$ is the two-dimensional Lebesgue measure of $A$. 
By \eqref{eq122117} and \eqref{eq122229}, we obtain
\[
\iint_{[-R,R]^2} \left| \frac{\hat{f}(u,v)-\hat{g}(u,v)}{uv} \right| \,dudv
\ll (\log{q})^{-1}. 
\]
Then, we proceed to the second integral of the right-hand side of \eqref{eq121643}. 
Here we divide it as 
\[
\int_{-R}^{R} \left|\frac{f(u,0)-g(u,0)}{u}\right| \,du
=\int_{[-R,-r) \cup (r,R]}+ \int_{[-r,r]}
=I_3+I_4,  
\]
say. 
By upper bound \eqref{eq122239}, the integral $I_3$ is estimated as
\[
I_3
\ll \left(\log\frac{R}{r}\right)
\sup_{u \in [-R,R]} |f(u,0)-g(u,0)|
\ll (\log{q})^{-2} \log\log{q}. 
\]
Furthermore, we have 
\begin{align*}
f(u,0)-g(u,0)
&=\iint_{\mathbb{R}^2} (e^{ixu}-1) \,d \mu(x,y)
-\iint_{\mathbb{R}^2} (e^{iyv}-1) \,d\nu(x,y) \\
&\ll |u| \left( \iint_{\mathbb{R}^2} x^2 \,d \mu(x,y) \right)^{1/2}
+|u| \left( \iint_{\mathbb{R}^2} x^2 \,d\nu (x,y) \right)^{1/2}\\
&\ll_{\sigma,\Theta}|u|, 
\end{align*}
and therefore, we deduce $I_4 \ll r = (\log{q})^{-2}$. 
The third integral of the right-hand side of \eqref{eq121643} is estimated along the same line. 
From the above, we finally arrive at
\[
\sup_{(x,y) \in \mathbb{R}^2} |F(x,y)-G(x,y)|
\ll (\log{q})^{-1} + R_\sigma(q)^{-1}
\ll R_\sigma(q)^{-1} 
\]
by Lemma \ref{lemEsseen2}. 
Note that the inequality 
\begin{align*}
|\mu(R)-\nu(R)|
&\leq|F(b,d)-G(b,d)|-|F(a,d)-G(a,d)| \\
&\qquad
-|F(b,c)-G(b,c)|+|F(a,c)-G(a,c)|
\end{align*}
holds if we write $R=[a,b] \times [c,d]$. 
Hence 
\begin{gather}\label{eq122311}
\sup_{ R \subset \mathbb{C}}
|\mu(R)-\nu(R)|
\ll R_\sigma(q)^{-1}
\end{gather}
follows. 
In addition, we see that 
\begin{align}\label{eq122312}
&\sup_{R \subset \mathbb{C}}
\Bigg| \sum_{f \in B'_2(q,s)} \omega_q(f)
1_R(\log{L}(s,f)) - \mu(R) \Bigg|\\
&\leq \sum_{f \in \mathcal{E}} \omega_q(f)
+\frac{|N(q,s)^\mathcal{E}-1|}{N(q,s)^\mathcal{E}}
\sum_{f \in B_2(q)} \omega_q(f)
\ll \exp\left(-\frac{b_1}{2} \frac{\log{q}}{\log\log{q}}\right) \nonumber
\end{align}
holds. 
By \eqref{eq122311} and \eqref{eq122312}, we obtain the desired discrepancy bound by recalling the equality
\[
\nu(R)
=\int_{R} \mathcal{M}_s(w,\Theta) \,|dw|, 
\]
which is deduced from Theorem \ref{thmPDF}.
\end{proof}

\begin{proof}[Proof of Theorem \ref{thmDB} $(\mathrm{ii})$]
In this case, we apply Lemma \ref{lemEsseen1} with the probability measures $\mu=\mu_{\sigma,q}^\mathcal{E}$ and $\nu=\mu_{\sigma}$, which are defined as 
\[
\mu_{\sigma,q}^\mathcal{E}(A;\omega_q)
=\frac{1}{N(q,\sigma)^\mathcal{E}}
\sum_{f \in B_2(q) \setminus \mathcal{E}} \omega_q(f)
1_A(\log{L}(\sigma,f)) 
\]
for $A \in \mathcal{B}(\mathbb{R})$ and as \eqref{eq111345}, respectively.   
Then the integral of the right-hand side of \eqref{eq122329} is evaluated as
\[
\int_{-R}^{R} \left|\frac{f(u)-g(u)}{u}\right| \,du
\ll (\log{q})^{-2} \log\log{q}
\]
along the argument for $I_3$ and $I_4$ in the proof of Theorem \ref{thmDB} $(\mathrm{i})$. 
Hence we deduce from Lemma \ref{lemEsseen1} the upper bound
\[
\sup_{x \in \mathbb{R}} |F(x)-G(x)|
\ll R_\sigma(q)^{-1}, 
\]
which yields the conclusion. 
\end{proof}

\section*{Acknowledgements}
The author would like to thank Kohji Matsumoto for providing information related to Lemma \ref{lemIhaMat} and making the latest version of his preprint \cite{Matsumoto2021+} available.

%\bibliographystyle{amsplain}
%\bibliography{refs}

\providecommand{\bysame}{\leavevmode\hbox to3em{\hrulefill}\thinspace}
\providecommand{\MR}{\relax\ifhmode\unskip\space\fi MR }
% \MRhref is called by the amsart/book/proc definition of \MR.
\providecommand{\MRhref}[2]{%
  \href{http://www.ams.org/mathscinet-getitem?mr=#1}{#2}
}
\providecommand{\href}[2]{#2}

\end{document}